\tikzstyle{edge}=[very thick]
\definecolor{bostonuniversityred}{rgb}{0.8, 0.0, 0.0}
\definecolor{arsenic}{rgb}{0.23, 0.27, 0.29}
\tikzstyle{diredge}=[postaction={decorate,decoration={markings,
\newcommand{\fitellipsis}[2] 
{\draw [fill=green]let \p1=(#1), \p2=(#2), \n1={atan2(\y2-\y1,\x2-\x1)}, \n2={veclen(\y2-\y1,\x2-\x1)}
    in ($ (\p1)!0.5!(\p2) $) ellipse [ x radius=\n2/2+0cm, y radius=0.1cm, rotate=\n1];
}
\theoremstyle{plain}
\newtheorem*{thm*}{Theorem}
\newtheorem{thm}{Theorem}
\Crefname{thm}{Theorem}{Theorems}
\numberwithin{thm}{section}
\newtheorem*{lem*}{Lemma}
\newtheorem{lem}[thm]{Lemma}
\Crefname{lem}{Lemma}{Lemmas}
\newtheorem*{claim*}{Claim}
\newtheorem{claim}{Claim}
\crefname{claim}{Claim}{Claims}
\Crefname{claim}{Claim}{Claims}
\Crefname{prop}{Proposition}{Propositions}
\crefname{cor}{Corollary}{Corollaries}
\newtheorem{conj}[thm]{Conjecture}
\crefname{conj}{Conjecture}{Conjectures}
\newtheorem{qn}[thm]{Question}
\Crefname{qn}{Question}{Questions}
\Crefname{obs}{Observation}{Observations}
\Crefname{ex}{Example}{Examples}
\theoremstyle{definition}
\Crefname{prob}{Problem}{Problems}
\Crefname{defn}{Definition}{Definitions}
\newtheorem*{defn*}{Definition}
\theoremstyle{remark}
\newcommand{\ceil}[1]{
    \left\lceil #1 \right\rceil
}
\newcommand{\floor}[1]{
    \left\lfloor #1 \right\rfloor
}
\def\expandafter\normalsize\expandafter{%
    \normalsize
    \setlength\abovedisplayskip{5pt}
    \setlength\belowdisplayskip{5pt}
    \setlength\abovedisplayshortskip{5pt}
    \setlength\belowdisplayshortskip{5pt}
}
\setlist[itemize]{leftmargin=*}
\newcommand{\HH}{\mathcal{H}}
\newcommand{\cF}{\mathcal{F}}
\newcommand{\un}{\mathrm{un}}
\renewcommand{\ex}{\mathrm{ex}}
\newcommand{\eps}{\varepsilon}
\newcommand{\sun}{f}
\newcommand{\St}{\mathit{St}}
\DeclareFontFamily{OT1}{pzc}{}
\DeclareFontShape{OT1}{pzc}{m}{it}{<-> s * [1.10] pzcmi7t}{}
\DeclareMathAlphabet{\mathpzc}{OT1}{pzc}{m}{it}
\newcommand{\Su}{\mathit{Sf}}
\title{\vspace{-0.8cm} Unavoidable hypergraphs}
\author{
Matija Buci\'c\thanks{Department of Mathematics, ETH, Z\"urich, Switzerland. Email: \href{mailto:matija.bucic@math.ethz.ch} {\nolinkurl{matija.bucic@math.ethz.ch}}.}
 \and
 Nemanja Dragani\'c \thanks{Department of Mathematics, ETH, Z\"urich, Switzerland. Email: \href{mailto:matija.bucic@math.ethz.ch} {\nolinkurl{nemanja.draganic@math.ethz.ch}}.}
\and
Benny Sudakov\thanks{Department of Mathematics, ETH, Z\"urich, Switzerland. Email:
\href{mailto:benjamin.sudakov@math.ethz.ch} {\nolinkurl{benjamin.sudakov@math.ethz.ch}}.
Research supported in part by SNSF grant 200021\_196965.}
\and
Tuan Tran\thanks{Discrete Mathematics Group, Institute for Basic Science (IBS), Daejeon, Republic of Korea. Email:
\href{mailto:tuantran@ibs.re.kr} {\nolinkurl{tuantran@ibs.re.kr}}.
This work was supported by the Institute for Basic Science (IBS-R029-Y1).}
}
 \date{}
\begin{document}

\maketitle

\begin{abstract}
The following very natural problem was raised by Chung and Erd\H{o}s in the early 80's and has since been repeated a number of times. What is the minimum of the Tur\'an number $\ex(n,\HH)$ among all $r$-graphs $\HH$ with a fixed number of edges? Their actual focus was on an equivalent and perhaps even more natural question which asks what is the largest size of an $r$-graph that can not be avoided in any $r$-graph on $n$ vertices and $e$ edges?

In the original paper they resolve this question asymptotically for graphs, for most of the range of $e$. In a follow-up work Chung and Erd\H{o}s resolve the $3$-uniform case and raise the $4$-uniform case as the natural next step. In this paper we make first progress on this problem in over 40 years by asymptotically resolving the $4$-uniform case which gives us some indication on how the answer should behave in general.
\end{abstract} 

\section{Introduction}
The Tur\'an number $\ex(n,\HH)$ of an $r$-graph $\HH$ is the maximum number of edges in an $r$-graph on $n$ vertices which does not contain a copy of $\cF$ as a subhypergraph. For ordinary graphs (the case $r=2$), a rich theory has been developed (see \cite{FS-survey}), initiated by the classical Tur\'an's theorem \cite{Turan41} dating back to 1941. The problem of finding the numbers $\ex(n,\HH)$ when $r>2$ is notoriously difficult, and exact results are very rare (see surveys \cite{Furedi-survey, Keevash-survey, Sidorenko-survey, Benny-icm} and references therein).

The following very natural extremal question was raised by Chung and Erd\H{o}s \cite{chung1983unavoidable} almost 40 years ago. What is the minimum possible value of $\ex(n,\HH)$ among $r$-graphs $\HH$ with a fixed number of edges? The focus of Chung and Erd\H{o}s was on the equivalent inverse question which is perhaps even more natural. Namely, what is the largest size of an $r$-graph that we can not avoid in any $r$-graph on $n$ vertices and $e$ edges? This question was repeated multiple times over the years: it featured in a survey on Tur\'an-type problems \cite{Furedi-survey}, in an Erd\H{o}s open problem collection \cite{chung1997open} and more recently in an open problem collection from AIM Workshop on Hypergraph Tur\'an problems \cite{MPS-report}.

Following Chung and Erd\H{o}s we call an $r$-graph $\HH$ as above $(n,e)$-{\em unavoidable}, so if every $r$-graph on $n$ vertices and $e$ edges contains a copy of $\HH$. Their question now becomes to determine the maximum possible number of edges in an $(n,e)$-unavoidable $r$-graph. Let us denote the answer by $\un_r(n,e)$. In the graph case, Chung and Erd\H{o}s determined $\un_2(n,e)$ up to a multiplicative factor for essentially the whole range.
In a follow-up paper from 1987, Chung and Erd\H{o}s \cite{chung1987unavoidable} studied the $3$-uniform case and identified the order of magnitude of $\un_3(n,e)$ for essentially the whole range of $e$.\footnote{Their argument unfortunately contains an error: the proof of \cite[Lemma 6]{chung1987unavoidable} is incorrect. We fill this gap in \Cref{sec:Turan-bookstars}.} In the same paper Chung and Erd\H{o}s raise the $4$-uniform case as the natural next step since the $3$-uniform result fails to give a clear indication on how the answer should behave in general. In the present paper we resolve this question by determining $\un_4(n,e)$ up to a multiplicative factor for essentially the whole range of $e$.

\begin{thm} \label{thm:main}
 The following statements hold.
\begin{enumerate} 
    \item[\rm (i)] For $1\le e\leq n^2$, we have $\un_4(n,e)\approx 1$.
    \item[\rm (ii)] For $n^2\leq e \leq n^3$, we have $\un_4(n,e) \approx \min \big\{(e/n^2)^{3/4},(e/n)^{1/3}\big\}$.
    \item[\rm (iii)] For $n^3<e \ll \binom{n}{4}$, we have $\un_4(n,e)\approx \min\Big\{e^{4/3}/n^{10/3},\frac{e^{1/4}\log n}{\log \left(\binom{n}{4}/e\right)}\Big\}$.
\end{enumerate}
\end{thm}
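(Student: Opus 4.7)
The plan is to prove matching upper and lower bounds on $\un_4(n,e)$ in each of the three regimes. Recall that $\un_4(n,e) \ge t$ holds precisely when there exists a 4-graph $\HH$ with $t$ edges and $\ex(n,\HH) < e$, while $\un_4(n,e) \le t$ holds precisely when every 4-graph with more than $t$ edges has Tur\'an number at least $e$; so each bound reduces to a Tur\'an-type statement about a specific family of candidate 4-graphs.

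For the lower bounds, the plan is to identify, in each regime, one or two explicit 4-graphs $\HH$ achieving the claimed size and to prove that their Tur\'an numbers are less than $e$. Natural candidates are: a single edge in regime (i); a ``book''-type 4-graph whose Tur\'an number scales as $n^2 t^{4/3}$ together with a ``sunflower/star''-type 4-graph with Tur\'an number of order $n t^3$ in regime (ii); a small blow-up of the 4-uniform star (with Tur\'an number of order $t^{3/4} n^{5/2}$) together with a suitably blown-up complete 4-partite 4-graph whose Tur\'an number approaches $\binom{n}{4}$ at a rate controlled by a K\H{o}v\'ari--S\'os--Tur\'an argument in regime (iii). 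Solving each Tur\'an equation for $t$ in terms of $e$ and $n$ exactly recovers the expressions in the theorem.

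For the upper bounds, the plan is to exhibit, for each pair $(n,e)$, a small family of $n$-vertex 4-graphs on $e$ edges whose only common sub-4-graphs are (subhypergraphs of) the candidate structures used for the lower bounds. In regime (i), a uniformly random 4-graph with $e \ll n^2$ edges contains no fixed 4-graph with at least two edges with positive probability, giving $\un_4(n,e) = O(1)$. In regimes (ii) and (iii), the counterexample family combines a random 4-graph of the appropriate density with a handful of structured 4-graphs --- for example, large books, sunflowers, or complete 4-partite 4-graphs, sparsified or blown up as needed --- arranged so that any sub-4-graph lying in every member of the family must be isomorphic to (a subgraph of) one of the lower-bound constructions, and hence has at most the claimed number of edges.

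The main obstacle will be the delicate calibration of the K\H{o}v\'ari--S\'os--Tur\'an-type construction in regime (iii), where matching the exact logarithmic factor $\log n / \log(\binom{n}{4}/e)$ requires the kind of precise interplay between blow-up size and partite size that makes Zarankiewicz-type problems notoriously hard; this is where I expect the bulk of the technical work to lie. A secondary obstacle, flagged by the authors in their footnote, is that the 3-uniform argument of Chung and Erd\H{o}s relies on a Tur\'an bound for ``book-stars'' whose proof contains an error; a corrected and suitably generalised 4-uniform analogue of this auxiliary Tur\'an estimate is needed as a stepping stone to regime (iii).
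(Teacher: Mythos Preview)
Your high-level strategy --- lower bounds via explicit candidate 4-graphs with small Tur\'an number, upper bounds via a small family of $n$-vertex $e$-edge graphs with no large common subgraph --- is exactly right and matches the paper. However, the proposal contains a concrete error and substantial gaps.

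The random argument in regime (i) is wrong. If $\HH$ consists of two disjoint edges, then a uniformly random 4-graph with $e\ge 2$ edges contains $\HH$ with probability tending to $1$, not $0$; you cannot use a single random construction to kill all two-edge $\HH$ simultaneously. The paper instead uses two deterministic graphs: a partial Steiner system $S(2,4,n)$ (which has $\Omega(n^2)$ edges and forces any unavoidable $\HH$ to be linear) together with the 4-graph of all edges through a fixed pair of vertices (which forces any two edges of $\HH$ to share at least two vertices). These two constraints together leave only the single edge.

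More seriously, your proposal does not identify the candidate structures, and your suggested names are misleading. In regime (ii) the paper shows that a \emph{single} structure, the generalised star $\St_4(\sqrt{k},k,1)$, realises both terms of the minimum: its Tur\'an number is $\approx\max\{k^2n^2,k^{9/2}n\}$, and setting $t=k^{3/2}$ this is exactly $\max\{n^2t^{4/3},nt^3\}$. This is not a ``book'' in any standard sense. In regime (iii) the lower-bound structure is $t$ vertex-disjoint copies of $\St_4(d,d,k)$ with $d=(n/k)^{1/3}$ and $t=\min\{k,d^{1/4}\}$ --- not a blow-up. Getting the Tur\'an numbers of these objects is the heart of the paper: it requires a full asymptotic determination of $\ex(n,\Su_4(t,k))$ for all $t$, a delicate analysis of $\ex(n,\St_3(h,k))$ and of vertex-disjoint packings of such stars (including the degree-layering trick of embedding leaves among low-degree vertices), and a careful multi-case argument for $\ex(n,\St_4(d,d,k))$. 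None of this is hinted at in your plan, and ``solving each Tur\'an equation for $t$'' presupposes precisely the bounds you would need to prove.

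For the upper bounds in (ii) and (iii), the paper does not use random graphs at all; it uses an explicit list of structured graphs (Steiner systems, linear $k$-graphs expanded to 4-edges, graphs with prescribed intersections with small vertex sets, and unions of cliques) to force the structure of any unavoidable $\HH$, together with a simple counting lemma (\Cref{lem:general-ub}) for the dense part of regime (iii). You are right that the K\H{o}v\'ari--S\'os--Tur\'an input is needed for the logarithmic term in (iii), but it enters on the lower-bound side (finding many disjoint $K_4(s,s,s,t)$'s), not the upper.
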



The optimal unavoidable hypergraphs, or in other words hypergraphs which minimise the Tur\'an number, turn out to be certain combinations of sunflowers of different types. For this reason, it is essential for our proof of \Cref{thm:main} to have a good understanding of the Tur\'an numbers of sunflowers for a wide range of parameters. This turns out to be a well-studied problem in its own right.

\subsection{Sunflowers}
A family $A_1,\ldots,A_k$ of distinct sets is said to be a {\em sunflower} if there exists a {\em kernel} $C$ contained in each of the $A_i$ such that the {\em petals} $A_i\setminus C$ are disjoint. The original term for this concept was ``$\Delta$-system''. The more recent term ``sunflower'' coined by Deza and Frankl \cite{deza-frankl} has recently become more prevalent. For $r,k\ge 1$, let $\sun_r(k)$ denote the smallest natural number with the property that any family of $\sun_r(k)$ sets of size $r$ contains an ($r$-uniform) sunflower with $k$ petals. The celebrated Erd\H{o}s-Rado theorem \cite{ER60} from 1960 asserts that $\sun_r(k)$ is finite; in fact Erd\H{o}s and Rado gave the following bounds:
\begin{equation}\label{eq:Erdos-Rado bound}
(k-1)^r \le \sun_r(k) \le (k-1)^r r!+1.
\end{equation}

They conjectured that for a fixed $k$ the upper bound can be improved to $\sun_r(k) \le O(k)^r$. Despite significant efforts, a solution to this conjecture remains elusive. The current record is
$\sun_r(k) \le O( k \log(kr))^r$, established in 2019 by Rao \cite{Rao20}, building upon a breakthrough of Alweiss, Lovett, Wu and Zhang \cite{ALWZ19}.

Some 43 years ago, Duke and Erd\H{o}s \cite{DE77} initiated the systematic investigation of a closely related problem. Denote by $\Su_r(t,k)$ the $r$-uniform sunflower with $k$ petals, and kernel of size $t$. Duke and Erd\H{o}s asked for the Tur\'an number of $\Su_r(t,k)$. 
Over the years this problem has been reiterated several times \cite{Furedi-survey,chung1997open} including in a recent collaborative ``polymath'' project \cite{Polymath}. 
The case $k=2$ of the problem has received
considerable attention \cite{FF85, FR87, FW81, KL17, KL20, Sos73}, partly due to its huge impact in discrete geometry \cite{FR90}, communication complexity \cite{Sgall99} and quantum computing \cite{BCW98}.
Another case that has a rich history \cite{Erdos65, EG61, EKR61, Frankl13, Frankl17, Frankl17-newrange, FK18} is $t=0$ (a matching of size $k$ is forbidden); the optimal construction in this case is predicted by the Erd\H{o}s Matching Conjecture. 

For fixed $r,t$ and $k$ with $1 \le t \le r-1$ and $k \ge 3$ Frankl and F{\"u}redi \cite[Conjecture 2.6]{FF87} give a conjecture for the correct value of $\ex(n,\Su_r(t,k))$ up to lower order terms, based on two natural candidates for near-optimal $\Su_r(t,k)$-free $r$-graphs. They verify their conjecture for $r \ge 2t+3$, but otherwise, with the exception of a few particular small cases, it remains open in general. If we are only interested in asymptotic results the answer of $\ex(n,\Su_r(t,k))\approx n^{\max\{r-t-1,t\}}$ was determined by Frankl and F{\"u}redi \cite{FF85} and F{\"u}redi \cite{Furedi83}. 

Another natural question is what happens if we want to find large sunflowers, in other words if we only fix the uniformity $r$ and ``type'' of the sunflower, determined by its kernel size $t$, while allowing $k$ to grow with $n$. Further motivation for this question is that it is easy to imagine that it could be very useful to know how big a sunflower of a fixed type we are guaranteed to be able to find in an $r$ graph with $n$ vertices and $e$-edges. In particular, it is precisely the type of statement we require when studying the unavoidability problem of Chung and Erd\H{o}s.
In the graph case $r=2$ the question simply asks for the Tur\'an number of a (big) star and the answer is easily seen to be $\ex(n,\Su_2(1,k))\approx nk$. 
In contrast, the $3$-uniform case is already non-trivial: Duke and Erd\H{o}s \cite{DE77} and Frankl \cite{Frankl78} showed $\ex(n,\Su_3(1,k)) \approx nk^2$ while $\ex(n,\Su_3(2,k))\approx n^2k$. Chung \cite{chung1983unavoidableS} even managed to determine the answer in the $3$-uniform case up to lower order terms, while Chung and Frankl \cite{chung-frankl} determined $\ex(n,\Su_3(1,k))$ precisely for large enough $n$. Chung and Erd\H{o}s \cite{chung1987unavoidable} wrote in their paper that results for such large sunflowers with uniformity higher than $3$ are far from satisfactory. Here we make first progress in this direction, by solving asymptotically the $4$-uniform case.

\begin{thm}\label{thm:4-uniform-sunflowers}
For $2 \le k\le n$ we have
\begin{enumerate}
\item[\rm (i)]  $\ex(n,\Su_4(1,k))\approx k^2n^2$,
\item[\rm (ii)] $\ex(n,\Su_4(2,k))\approx k^2n^2$ and
\item[\rm (iii)] $\ex(n,\Su_4(3,k))\approx kn^3$.
\end{enumerate}
\end{thm}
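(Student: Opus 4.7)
All three parts follow a common template: the sunflower hypothesis forces the ``link'' of the kernel to have bounded matching, which via Erd\H{o}s--Gallai or the Erd\H{o}s matching conjecture yields a local edge bound; the challenge is to aggregate local bounds into tight global ones.

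Part (iii) is immediate: every triple of a $\Su_4(3,k)$-free 4-graph is contained in at most $k-1$ edges, so $4|E|=\sum_T d(T)\le (k-1)\binom{n}{3}=O(kn^3)$. For the matching construction, sample each 4-set independently with probability $p=ck/n$; every triple's codegree is Binomial$(n-3,p)$-distributed and tightly concentrated around $ck$ by Chernoff, so deleting one edge per over-full triple yields a $\Su_4(3,k)$-free 4-graph with $\Theta(kn^3)$ edges.

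For (i) the construction fixes $S\subseteq V$ with $|S|=k$ and takes all 4-sets meeting $S$ in at least 2 vertices: this has $\Theta(k^2n^2)$ edges, and since every triple of each vertex link $L_v$ meets $S\setminus v$, matchings in $L_v$ are capped at $k-1$. For the upper bound, the naive Erd\H{o}s matching estimate $|L_v|\le O(kn^2)$ only yields $|E|=O(kn^3)$, so one must exploit global structure. The plan: for each $v$ pick a maximum matching $M_v\subset L_v$ and let $W_v:=V(M_v)$ with $|W_v|\le 3(k-1)$. Every 4-edge $e$ has, for each $v\in e$, a partner $w_v\in (e\setminus v)\cap W_v$, yielding at least two distinct witness pairs inside $e$ that belong to the auxiliary graph $H=\{\{v,w\}:w\in W_v\text{ or }v\in W_w\}$, whose total edge count is $\le 3(k-1)n$. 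Counting 4-edges with two disjoint witnesses (bounded by $\binom{|E(H)|}{2}=O(k^2n^2)$) together with a careful treatment of 4-edges whose two witnesses share a vertex yields the required estimate; the share-a-vertex case is controlled by observing that any vertex $x$ lying in $W_v$ for many different $v$'s would itself produce a matching of size $\ge k$ inside the link $L_x$, contradicting $\Su_4(1,k)$-freeness.

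For (ii) the strategy is analogous but with pair-links replacing vertex-links, using Erd\H{o}s--Gallai on graph matchings to give each $L_P$ a vertex cover of size $\le 2(k-1)$. The construction layers a sunflower-like configuration on a small set $S$ together with an appropriate sparse structure on $V\setminus S$ to attain $\Theta(k^2n^2)$ edges with every pair-link of matching less than $k$; the upper bound runs the analogous auxiliary-hypergraph cherry-counting argument. The main obstacle, here and in (i), is precisely the share-a-vertex case of the cherry count: in the extremal example the auxiliary object has a handful of very high-degree vertices (the elements of the sunflower core $S$), so converting ``too many witnesses at $x$'' into an actual copy of $\Su_4(t,k)$ in $L_x$ must be done carefully, and for (ii) this likely requires a two-phase dichotomy, since the Erd\H{o}s--Gallai extremal families for pair-links split into near-cliques and near-stars with very different global implications.
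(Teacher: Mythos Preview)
Your argument for part (iii) is correct and essentially matches the paper (your random-deletion construction is a fine alternative to the paper's union of random partial Steiner systems).

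Your lower bound construction for (i) is also correct and essentially the paper's. However, your upper bound plan for (i) has a genuine gap. The crucial claim---that a vertex $x$ lying in $W_v$ for many different $v$'s must have a matching of size $\ge k$ in its own link $L_x$---is false, and the counterexample is precisely your extremal construction. Take $x\in S$. For every $v\notin S$ the link $L_v$ consists of triples meeting $S$ in at least two vertices, so any maximum matching $M_v$ uses essentially all of $S$, and in particular $x\in W_v$ for $\Theta(n)$ many $v$'s. Yet $L_x$ still has no $k$-matching (each of its triples meets the $(k-1)$-set $S\setminus\{x\}$). So the high-degree vertices of your auxiliary graph $H$ cannot be eliminated this way, and since the cherry count is governed by $\sum_x d_H(x)^2$, those few vertices of degree $\Theta(n)$ blow the count up far beyond $O(k^2n^2)$.

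The paper sidesteps this entirely with a different mechanism. It isolates the $3k$-\emph{expanding} triples (codegree $\ge 3k$): either some vertex $v$ sees $\gg k^2 n$ expanding triples in its link, in which case the already-proved $3$-uniform bound $\ex(n,\Su_3(1,k))\lesssim k^2n$ yields an $\Su_3(1,k)$ of expanding triples that greedily extends to $\Su_4(1,k)$; or one deletes every edge containing an expanding triple and repeats one level down with $18k^2$-expanding pairs. This expanding-set dichotomy is what makes the upper bound go through; your matching-cover bookkeeping does not separate the two extremal behaviours.

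For part (ii) your upper bound inherits the same gap, and your lower bound (``layering a sunflower-like configuration on a small set $S$'') is too vague to assess. Note that the obvious candidates---a clique on $2k$ vertices, or all $4$-sets meeting a fixed $k$-set twice---either have too few edges or are not $\Su_4(2,k)$-free. The paper's construction is less direct: it plants $\Theta(n^2/k^2)$ random copies of $K_{2k}$ whose $2$-edge sets are kept pairwise disjoint, and takes as $4$-edges the $K_4$'s inside them; edge-disjointness confines the link of every pair to a single $2k$-vertex clique, while linearity of expectation gives $\Theta(k^2n^2)$ edges. The paper's upper bound for (ii) is a one-line reduction: a vertex of degree $\gg k^2 n$ has an $\Su_3(1,k)$ in its $3$-uniform link, which together with that vertex is an $\Su_4(2,k)$.
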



\subsection{General proof strategy}
Our proof strategy for determining $f_r(n,e)$ for most of the range is as follows. In order to show an upper bound $f_r(n,e) \le D$ we need to show there is no $r$-graph with more than $D$ edges which is contained in every $r$ graph with $n$ vertices and $e$ edges. With this in mind we consider a number of, usually very structured, $n$-vertex $r$-graphs on $e$ or more edges, and argue they can not have a common subhypergraph with more than $D$ edges. The hypergraphs we use are often based on Steiner systems or modifications thereof. A major benefit of this approach is that our collection of hypergraphs often imposes major structural restrictions on possible common graphs which have close to $D$ edges as well and tells us where to look for our optimal examples of unavoidable hypergraphs which we need in order to show matching lower bounds, by upper bounding their Tur\'an numbers. 


\vspace{0.2cm}
\textbf{Organisation.}
In the following section we establish some preliminary results we will need later. In \Cref{sec:sunflower} we prove \Cref{thm:4-uniform-sunflowers}. In \Cref{sec:sparse-regime} we prove the first two parts of \Cref{thm:main}. In \Cref{sec:Turan-bookstars} we deal with the remaining regime. This section is split into several parts, in \Cref{subsec-3-uniform-gen-stars} we establish a number of $3$-uniform results we will need for the lower bounds     , which is proved in \Cref{subsec:-lower-bound}. We prove the upper bounds in \Cref{subsec:upper-bounds}. Finally, in \Cref{sec:conc-remarks} we make some final remarks and give a number of open problems and conjectures.

\vspace{0.2cm}
\textbf{Notation.}
A {\em generalised star} is defined recursively as follows: $\St_2(d)$ is the usual star $S_d$ with $d$ leaves, and $\St_r(d_1,\ldots,d_{r-1})$ is the $r$-graph in which all edges have a vertex $v$ in common and upon removal of $v$ from every edge we obtain $d_1$ copies of $\St_{r-1}(d_2,\ldots,d_{r-1})$.

Let $G$ be an $r$-graph, and let $S\subseteq V(G)$ such that $1\leq |S|\leq r-1$. Then the \emph{link graph}, denoted $L_S$, is the $(r-|S|)$-graph on $V(G)$, whose edges are the sets $T$ of size $r-|S|$ such that $S\cup T\in E(G)$.
The \emph{codegree} of $S$ in $G$ is defined as the number of edges of $G$ which contain $S$. 
If the codegree of $S$ is at least $k$, we say that $S$ is $k$-\emph{expanding}. We will refer to the immediate fact that in any $k$-uniform hypergraph the number of edges times the uniformity equals the sum of degrees over all vertices as the \textit{handshaking lemma}.

For non-negative functions $f$ and $g$ we write either $f \lesssim g$ or $f=O(g)$ to mean there is a constant $C>0$ such that $f(n)\le Cg(n)$ for all $n$, we write $f \gtrsim g$ or $f=\Omega(g)$ to mean there is a constant $c>0$ such that $f(n)\ge cg(n)$ for all $n$, we write $f\approx g$ to mean that $f\lesssim g$ and
 $f \gtrsim g$. To simplify the presentation we write $f\gg g$ or $g\ll f$ to mean that $f\ge Cg$ for a sufficiently large constant $C$\footnote{Note here that we are defining $\gg$, in a way which is more common in fields outside of combinatorics, namely $f \gg g$ does not mean $g=o(f)$ but is more similar to $g=O(f)$ with the exception that we are allowed to choose the constant in the big $O$ as small as we like, as long as it remains fixed.}, which can be computed by analysing the argument. In particular, in this paper choosing $C=2^{30}$ would be sufficient for all our arguments. All asymptotics are as $n\rightarrow \infty$ unless specified otherwise. 

From now on whenever we say optimal unavoidable graph, we mean it has the largest number of edges up to a constant factor. Throughout the paper we omit floor and ceil signs whenever they are not crucial, for the sake of clarity of presentation and since they would only, possibly, impact the constant factors.

\section{Preliminaries}
In this section we collect several simple results, that we use later on. The next two results will provide us with building blocks for examples of hypergraphs which will be useful both for proving lower bounds on Tur\'an numbers of sunflowers needed for \Cref{thm:4-uniform-sunflowers} as well as to force structure when proving upper bounds in \Cref{thm:main}. We include proofs for completeness.

\begin{lem}[Partial Steiner Systems]\label{lem:partial-Steiner}
Let $k>t>0$ be fixed integers. For 
every $n$ sufficiently large, there exists a 
$k$-graph $S(t,k,n)$ on $n$ vertices such that every set of vertices of size $t$ is contained in at most one edge, and the number of edges of $S(t,k,n)$ is at least
$
\Omega( n^t).
$
\end{lem}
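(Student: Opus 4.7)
The plan is to use a standard probabilistic (deletion method) argument. First I would take the complete $k$-graph on $n$ vertices and include each potential edge independently with probability $p$, where $p$ is to be chosen. Let $X$ denote the number of edges retained and let $Y$ denote the number of ``bad'' pairs of edges, that is, unordered pairs $\{A,B\}$ of distinct $k$-subsets both retained with $|A\cap B|\ge t$. If I can arrange $\mathbb{E}[X-Y]\gtrsim n^t$, then there is an outcome in which I can destroy every bad pair by deleting one edge from each, leaving at least $\Omega(n^t)$ edges in which every $t$-set lies in at most one edge.

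The two quantities are easy to estimate. Clearly $\mathbb{E}[X]=p\binom{n}{k}\asymp p n^k$. For $Y$, I would enumerate ordered pairs $(A,B)$ of distinct $k$-sets with $|A\cap B|\ge t$ by first choosing $A$, then choosing a $t$-subset $T\subseteq A$, then choosing a $k$-set $B\supseteq T$ with $B\ne A$. This gives an upper bound
\[
\mathbb{E}[Y]\le p^2\binom{n}{k}\binom{k}{t}\binom{n-t}{k-t}\lesssim p^2 n^{2k-t}.
\]
Setting $p=c\, n^{t-k}$ for a sufficiently small absolute constant $c=c(k,t)>0$, the two expectations become $\mathbb{E}[X]\asymp c\, n^t$ and $\mathbb{E}[Y]\lesssim c^2 n^t$, so choosing $c$ small enough yields $\mathbb{E}[X-Y]\ge \tfrac{1}{2}\mathbb{E}[X]\gtrsim n^t$, as required.

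The argument has no real obstacle; the only point to watch is the intersection-enumeration bound, to ensure that the exponent of $n$ in $\mathbb{E}[Y]$ is exactly $2k-t$ (so that the choice $p\asymp n^{t-k}$ makes $\mathbb{E}[Y]$ comparable to $\mathbb{E}[X]$). The factor $\binom{k}{t}$ is a harmless constant depending only on $k,t$, which is why the statement only claims $\Omega(n^t)$ rather than an explicit constant factor. Finally, I would note that the same deletion step removes at most one edge per bad pair, and that once all bad pairs are destroyed, any $t$-set lies in at most one remaining edge by definition, giving the required partial Steiner system.
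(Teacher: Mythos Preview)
Your deletion-method argument is correct and complete: the expectation computations are right, the choice $p\asymp n^{t-k}$ balances $\mathbb{E}[X]$ against $\mathbb{E}[Y]$, and after deleting one edge from every bad pair the remaining hypergraph has the desired property with $\Omega(n^t)$ edges.

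The paper, however, does not go probabilistic. It uses a direct greedy/counting argument: for a fixed $k$-set $X$, the number of $k$-sets meeting $X$ in at least $t$ vertices is $\sum_{i=t}^{k}\binom{k}{i}\binom{n-k}{k-i}=O(n^{k-t})$, and so one can greedily pick $k$-sets, each time forbidding only $O(n^{k-t})$ future choices, until at least $\binom{n}{k}\big/\sum_{i=t}^{k}\binom{k}{i}\binom{n-k}{k-i}=\Omega(n^t)$ edges are chosen. Both routes are standard and yield the same asymptotic bound; the paper's argument is slightly more elementary in that it avoids any randomness and the alteration step, while your approach has the usual flexibility of the probabilistic method (e.g.\ it adapts painlessly if one wants additional local constraints).
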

\begin{proof}
Let $X$ be an arbitrary $k$-subset of $[n]$. The number of $k$-sets which intersect $X$ in $i$ elements is $\binom{k}{i}\binom{n-k}{k-i}$. Thus the total number of $k$-sets which intersect $X$ in at least $t$ elements is $\sum_{i=t}^{k}\binom{k}{i}\binom{n-k}{k-i}$. It follows that there exists a $k$-graph $G$ on $[n]$ such that:
\begin{itemize}
\item Any two edges of $G$ intersect in at most $t-1$ elements;
\item $|E(G)| \ge \frac{\binom{n}{k}}{\sum_{i=t}^{k}\binom{k}{i}\binom{n-k}{k-i}} \gtrsim n^t$.
\end{itemize}
This completes the proof.
\end{proof}

The above result as stated requires $k$ to be fixed, however for certain applications we will want to relax this assumption. The following result is a special case where $t=2$ and we allow $k \le \sqrt{n/2}$. Here we say a hypergraph is linear if no two of its edges intersect in more than one vertex.

\begin{lem}\label{lem:linear}
For $2\le k \le \sqrt{n/2}$, there exists a
linear $k$-graph on $n$ vertices with at least $n^2/4k^2$ edges.
\end{lem}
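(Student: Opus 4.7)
The plan is an explicit algebraic construction of Reed--Solomon type. I do not expect a probabilistic or greedy argument to suffice: applying the union-bound idea behind \Cref{lem:partial-Steiner} with $t=2$ and tracking the $k$-dependence yields only about $n^2/k^4$ edges in the regime $k \le \sqrt{n/2}$, whereas the target $n^2/(4k^2)$ is within a constant factor of the trivial upper bound $\binom{n}{2}/\binom{k}{2}$ on the size of any linear $k$-graph. Finding the right structured construction is, in my view, the main obstacle; once it is in hand the verification is essentially mechanical.

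For the construction, I would first use Bertrand's postulate to pick a prime $m$ with $n/(2k) < m \le n/k$. The hypothesis $k \le \sqrt{n/2}$ ensures both that $n/(2k) \ge 1$, so Bertrand applies, and that $m > n/(2k) \ge k$, which the construction needs. I then identify $km \le n$ of the vertices with $[k] \times \mathbb{F}_m$ (treating any leftover vertices as isolated), and for each pair $(a,b) \in \mathbb{F}_m^{\,2}$ introduce the edge
\[
e_{a,b} \;=\; \bigl\{(i,\, a + b i \bmod m) : i \in [k]\bigr\}.
\]
This produces $m^2 > n^2/(4k^2)$ edges, each of size $k$, and distinct pairs $(a,b)$ give distinct edges, since any two coordinates of an edge determine $a$ and $b$ by inverting a linear system over $\mathbb{F}_m$.

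The remaining task is to verify linearity. Suppose $e_{a,b}$ and $e_{a',b'}$ share the vertex at coordinate $i$; then $(b-b')\, i \equiv a'-a \pmod{m}$. If $b = b'$ this forces $a = a'$, so the edges coincide; if $b \ne b'$ then, because $\mathbb{F}_m$ is a field, the equation has a unique solution $i \in \mathbb{F}_m$, so any two distinct edges share at most one vertex. The only number-theoretic step --- producing a prime $m$ in the requested range with $m \ge k$ --- is handled cleanly by Bertrand's postulate together with $k \le \sqrt{n/2}$, so this is a minor technical point rather than a genuine obstacle.
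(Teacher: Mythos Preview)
Your proof is correct and is essentially the same construction as the paper's: both pick a prime $p\approx n/k$ via Bertrand's postulate and take as edges the restrictions of non-vertical lines in $\mathbb{F}_p^2$ to the strip with first coordinate in $\{0,\ldots,k-1\}$. Your parametrisation $e_{a,b}=\{(i,a+bi):i\in[k]\}$ is exactly the paper's $L_{(x,y)}=\{(t,x+ty):0\le t\le k-1\}$ under relabelling, and the linearity and edge-count verifications are carried out the same way.
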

\begin{proof}
By Chebyshev's theorem, there is a prime $p$ between $n/2k$ and $n/k$. Look at the affine plane $\mathbb{F}_p^2$, and consider its subset $V=\{(x,y)\in\mathbb{F}_p^2\mid 0\le x\le k-1,\hspace{1pt} y\in\mathbb{F}_p$\}. The vertex set of our hypergraph will be $V$. Note that $|V|<n$.
The edges are \emph{partial lines} $L_{(x,y)}$, defined as follows for each $(x,y)\in \mathbb{F}_p^2$:
$$
L_{(x,y)}= \{(0,x)+t(1,y)\mid 0 \le t \le k-1\}.
$$
Notice that for distinct pairs $(x_1,y_1)$ and $(x_2,y_2)$ the corresponding lines $L_{(x_1,y_1)}$ and $L_{(x_2,y_2)}$ intersect in at most one vertex, so our $k$-graph is linear, and has $p^2\geq n^2/4k^2$ edges.
\end{proof}

The following simple lemma will often come in useful.
\begin{lem}\label{lem:star-match}
Let $G$ be a graph with at least $2k\ell$ edges and with no star $S_k$. Then $G$ contains a matching of size $\ell$. 
\end{lem}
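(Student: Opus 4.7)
The plan is to exploit the fact that having no $S_k$ simply means that the maximum degree of $G$ is at most $k-1$, and combine this with the standard maximum-matching dominance argument.

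First, I would let $M$ be a maximum matching of $G$ of size $m$, and aim to show $m\geq \ell$. The key observation is that every edge of $G$ must share at least one endpoint with $V(M)$, since otherwise we could add it to $M$ and contradict maximality. Consequently, every edge of $G$ is counted at least once in $\sum_{v\in V(M)} \deg_G(v)$. Using that $\deg_G(v)\leq k-1$ for every vertex (because $G$ is $S_k$-free) and that $|V(M)|=2m$, I would estimate
\[
e(G) \leq \sum_{v\in V(M)} \deg_G(v) \leq 2m(k-1) < 2km.
\]

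Combining this with the hypothesis $e(G)\geq 2k\ell$ gives $2k\ell\leq e(G)<2km$, hence $m>\ell-1$, i.e.\ $m\geq \ell$, so the maximum matching has size at least $\ell$, as desired.

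There is no real obstacle here; the only subtlety is making sure to use maximality of $M$ (not maximum-cardinality only as a size, but the fact that no augmenting edge exists outside $V(M)$), which gives the edge-vertex incidence bound for free. Alternatively, one could argue greedily: repeatedly select an edge into the matching and delete all at most $2(k-1)-1 \leq 2k$ edges it dominates, showing we can perform at least $\ell$ such rounds; but the maximum-matching argument above is cleaner and avoids any iterative bookkeeping.
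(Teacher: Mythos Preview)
Your proof is correct. Your approach differs from the paper's: the paper argues greedily, repeatedly selecting an edge into the matching and deleting all edges incident to its two endpoints (at most $2k$ edges per step, since each vertex has degree at most $k-1$), so that after fewer than $\ell$ steps at most $2(\ell-1)k<2k\ell$ edges have been removed and an unused edge remains. You instead take a maximum matching $M$, observe that every edge of $G$ meets $V(M)$, and bound $e(G)\le\sum_{v\in V(M)}\deg_G(v)\le 2|M|(k-1)<2|M|k$, forcing $|M|\ge\ell$. Both arguments are equally elementary; yours is a cleaner one-line counting argument, while the paper's is slightly more constructive and is in fact precisely the greedy alternative you sketch at the end. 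One small terminological note: what you use is that $M$ is \emph{maximal} (inextendible), which any maximum matching certainly is; your parenthetical remark conflates the two notions but the logic is sound.
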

\begin{proof}
We find an $\ell$-matching $M$ in $G$ as follows. Let $v\in V(G)$ be a non-isolated vertex, and take an arbitrary edge $(u,v)$ incident with $v$ and put it in $M$. Now delete all edges incident to $u$ and $v$ from $G$ and repeat this procedure. If we found less than $\ell$ such edges, we deleted at most $2(\ell-1)k$ edges in $G$, so there is an edge left which we can add to $M$.
\end{proof}

The next auxiliary lemma is a generalisation of \cite[Lemma 5]{chung1983unavoidable} which will come in useful when looking at higher uniformities. We give a different proof, as it illustrates an idea which will be used a lot later on.

\begin{lem}\label{lem:graph-many-stars}
    Any graph with $n$ vertices and $e=6sn \ge 6kn$ edges contains at least $\min\{s,\sqrt{sn}/k\}$ vertex-disjoint copies of the star $S_k$.
\end{lem}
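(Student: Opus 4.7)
My plan is to prove the lemma by running a greedy star-extraction on $G$, combined with a case split on its maximum degree. Set $m := \min\{s, \sqrt{sn}/k\}$. The greedy procedure: repeatedly pick a vertex $v$ of maximum current degree with $\deg(v) \geq k$, form an $S_k$ with $v$ as center and $k$ of its current neighbors as leaves, and delete the $k+1$ vertices. Halt when every vertex has degree $<k$. At termination the residual graph has fewer than $nk/2$ edges, so the algorithm has removed at least $e - nk/2 \geq 11sn/2$ edges in total (using $s\geq k$, so $nk/2 \leq sn/2$). At step $i$ the number of edges removed is at most $(k+1)\Delta_i$, where $\Delta_i$ is the current maximum degree; crucially $\Delta_i$ is monotone non-increasing.

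\textbf{Case 1} ($\Delta(G)\leq \sqrt{sn}$): Then $\Delta_i \leq \sqrt{sn}$ throughout, so every step removes at most $(k+1)\sqrt{sn}$ edges and the number of extracted stars is at least $\frac{11sn/2}{(k+1)\sqrt{sn}} = \frac{11\sqrt{sn}}{2(k+1)} \geq \frac{\sqrt{sn}}{k} \geq m$, finishing this case.

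\textbf{Case 2} ($\Delta(G) > \sqrt{sn}$): I plan to split the greedy into two phases. Phase~1 runs while the current max degree exceeds $\sqrt{sn}$, always picking a max-degree center; Phase~2 continues after $\Delta \leq \sqrt{sn}$ and is analysed as in Case~1. By Markov's inequality, $G$ has at most $2e/\sqrt{sn} = 12\sqrt{sn}$ vertices of degree $>\sqrt{sn}$, so Phase~1 can last at most $12\sqrt{sn}$ steps (each step uses a distinct high-degree center). If Phase~1 alone produces $\geq m$ stars we are done; otherwise the Phase~1 count $t_1 < m \leq \sqrt{sn}/k$, and the task reduces to extracting the missing $m - t_1$ stars in Phase~2 via the Case~1 argument applied to the residual graph.

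The main obstacle is controlling the edges consumed in Phase~1, since the naive bound $(k+1)n$ per step can exhaust all of $e$ before we reach $m$ stars. I plan to address this by (i) at each Phase~1 step choosing $k$ of the center's neighbors of smallest current degree, so that the per-step edge cost is close to the center's degree alone rather than $(k+1)$ times it; and (ii) an amortized accounting that exploits the monotonicity $\Delta_1 \geq \Delta_2 \geq \cdots$ together with the handshake bound $\sum_{v}\deg_G(v) = 2e$ restricted to the at most $12\sqrt{sn}$ high-degree vertices. This shows that the cumulative Phase~1 edge cost is $O(t_1\, n + k\sqrt{sn}\cdot \sqrt{sn})$, which leaves at least $11sn/2 - O(\sqrt{sn}\cdot n)/\text{(something)}$ edges for Phase~2, enough to invoke the Case~1 bound and produce the remaining $m - t_1$ stars.
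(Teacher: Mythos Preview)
Your Case 1 is fine, but Case 2 has a genuine gap. The bound you claim for the cumulative Phase~1 edge cost, $O(t_1 n + k\sqrt{sn}\cdot\sqrt{sn}) = O(t_1 n + ksn)$, is useless once $k\ge 6$, since $ksn \ge e$. You recognise the difficulty but do not resolve it: the final sentence literally contains ``(something)'', and neither the low-degree-leaf trick nor the amortisation you sketch actually yields a workable bound. Concretely, at a Phase~1 step the centre $v$ has more than $\sqrt{sn}$ neighbours, but there can be up to $12\sqrt{sn}$ vertices of degree exceeding $\sqrt{sn}$, so you cannot guarantee $k$ neighbours of low degree; and bounding the $k$ smallest-degree neighbours by the average degree in $N(v)$ only gives $\le 12\sqrt{sn}$ per leaf, hence leaf cost $\le 12 t_1 k\sqrt{sn} < 12sn$, which together with the centre cost $t_1 n < sn$ already exceeds $e=6sn$.

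The fix is to change the threshold. The paper uses degree threshold $t(k+1)$ (where $t=\min\{s,\sqrt{sn}/k\}$) rather than $\sqrt{sn}$, and this choice makes both halves of the dichotomy immediate. If at least $t$ vertices have degree $\ge t(k+1)-1$, then one greedily builds $t$ disjoint stars centred at them: after $i-1<t$ stars one has used $(i-1)(k+1)$ vertices, so the next centre still has $\ge t(k+1)-1-(t-1)(k+1)=k$ available neighbours. Otherwise, removing the fewer than $t$ high-degree vertices costs $<tn\le sn$ edges, and in the remaining graph of maximum degree $<t(k+1)$ a maximal collection of fewer than $t$ disjoint $S_k$'s touches $<(t-1)(k+1)\cdot t(k+1)\le 4t^2k^2\le 4sn$ edges (using $t\le \sqrt{sn}/k$), leaving $\ge sn\ge kn$ edges and hence another $S_k$. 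The point is that the threshold $t(k+1)$ is tuned simultaneously to the greedy high-degree embedding and to the touched-edge count in the bounded-degree phase; your threshold $\sqrt{sn}$ has neither property, and that is why your accounting does not close.
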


\begin{proof} Let $t=\min\{s,\sqrt{sn}/k\}$.
If there are at least $t$ vertices with degree at least $(t-1)(k+1)+k=t(k+1)-1$ then we can greedily find $t$ vertex-disjoint copies of $S_k$. This means that by removing at most $t\cdot n \le sn$ edges we get a graph with maximum degree less than $t(k+1)$. Now let us take a maximal collection of vertex-disjoint $S_k$'s. Unless we are done there are at most $(t-1)(k+1)$ vertices spanned by these stars, so in total they touch less than $t^2(k+1)^2 \le 4t^2k^2 \le 4sn$ edges. So upon removing them we are left with at least $sn \ge kn$ edges and can find another $S_k$.
\end{proof}

\section{Tur\'an numbers of sunflowers}
\label{sec:sunflower}

In this section we give the proof of \Cref{thm:4-uniform-sunflowers}. 

\subsection{3-uniform case}

We will need the following $3$-uniform results, which were already established by Duke and Erd\H{o}s \cite{DE77} and Frankl \cite{Frankl78}. We include our, somewhat simpler proofs, for completeness and to illustrate the ideas we will use in the $4$-uniform case. There are only two different types of $3$-uniform sunflowers, namely $\Su_3(1,k)$ and $\Su_3(2,k)$.

\begin{lem}\label{lem:3-uniform-star}
Let $2 \le k \ll n$ we have $\ex(n,\Su_3(1,k))\approx k^2n.$
\end{lem}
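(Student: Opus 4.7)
The plan is to prove the lower and upper bounds separately.

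For the lower bound, I construct a $\Su_3(1,k)$-free $3$-graph with $\Theta(k^2 n)$ edges. Partition the vertex set as $A \cup B \cup C$ with $|A|=|B|=k-1$ and $|C|=n-2(k-1)$, and take as edges all triples $\{a,b,c\}$ with $a \in A$, $b \in B$, $c \in C$. The link of every vertex is a complete bipartite graph between two parts each of size at most $k-1$, so its matching number is at most $k-1$ and no $\Su_3(1,k)$ arises. The total edge count is $(k-1)^2(n-2(k-1)) = \Theta(k^2 n)$.

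For the upper bound, I plan to show that any $\Su_3(1,k)$-free $3$-graph $H$ on $n$ vertices has $e(H) \le C k^2 n$ for a suitable absolute constant $C$, by induction on $n$. If some vertex $v$ has $d_H(v) \le 2k^2$, delete it: the resulting $3$-graph on $n-1$ vertices is still $\Su_3(1,k)$-free, and the inductive hypothesis gives $e(H) \le 2k^2 + Ck^2(n-1) \le C k^2 n$ (for $C \ge 2$). Otherwise every vertex has degree greater than $2k^2$, so applying \Cref{lem:star-match} with $\ell = k$ to each link $L_v$ (which has at least $2k\cdot k$ edges and, by assumption, no $k$-matching) forces $L_v$ to contain a star $S_k$; this produces a ``heavy partner'' $u(v)$ with $c(v, u(v)) \ge k$.

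Given this heavy-partner structure, my plan is to find $\Su_3(1, k)$ directly. Consider the directed graph on $V$ with arrows $v \to u(v)$. If some vertex $u^*$ is the target of at least $3k$ arrows, then $L_{u^*}$ contains stars $S_k$ at $3k$ distinct centres (each $v$ with $u(v)=u^*$ is a centre of a star with at least $k$ leaves in $L_{u^*}$). A K\"onig/greedy matching argument on this bipartite-like structure, picking one edge per star while avoiding the other centres and previously chosen leaves, extracts a $k$-matching in $L_{u^*}$ and hence $\Su_3(1, k)$. Otherwise the in-degree of every vertex in the heavy-partner digraph is less than $3k$, so the undirected heavy-pair graph has bounded degree, and a careful double-count of triples $(v, u(v), w)$ with $\{v, u(v), w\} \in H$, combined with the high minimum degree of $H$, yields the desired bound.

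The main obstacle is the final double-counting step: a naive Erd\H{o}s--Gallai bound applied link-by-link gives only $e(H) = O(k n^2)$, weaker than the target by a factor of $n/k$. Closing this gap requires a stability-type argument: if a link $L_v$ has edge count close to the Erd\H{o}s--Gallai extremum $(k-1)n$, $L_v$ must be close to the extremal ``book'' graph with $k-1$ spines; if many such links share spine vertices, the spines themselves have very dense links that must contain a $k$-matching, completing the contradiction.
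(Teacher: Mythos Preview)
Your lower bound construction is correct and close in spirit to the paper's (which uses two parts of sizes $k$ and $n-k$, taking all triples with exactly two vertices in the small part); both give $\Theta(k^2n)$ edges with every link having matching number at most $k-1$.

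Your upper bound, however, has a genuine gap that you yourself flag. In the high-minimum-degree case, your Case~2 (every vertex of the heavy-partner digraph has in-degree $<3k$) is not proved: you note that the link-by-link Erd\H{o}s--Gallai bound gives only $e(H)=O(kn^2)$, and your proposed stability fix is only a sketch. Carrying it out---controlling how ``spine'' vertices are shared across different links and showing this either forces a $k$-matching somewhere or globally bounds $e(H)$ by $O(k^2n)$---is substantially harder than what you outline, and it is not clear it closes without further ideas. (Incidentally, your Case~1 claim that $3k$ vertices of degree $\ge k$ in $L_{u^*}$ force a $k$-matching is true, but it needs a Hall-type argument; the greedy you describe does not obviously succeed, since each centre has only $\ge k$ neighbours while you must avoid up to $3k+k$ vertices.)

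The paper sidesteps this difficulty with a much cleaner decomposition. Call a pair of vertices \emph{$2k$-expanding} if it lies in at least $2k$ edges. For each vertex $v$, let $D_v\subseteq L_v$ be the subgraph of expanding pairs. Then $D_v$ contains no $k$-matching (together with $v$ this would be $\Su_3(1,k)$) and no $S_k$ (since each edge of such a star is $2k$-expanding, one can greedily extend to $\Su_3(1,k)$); hence $|D_v|\le 2k^2$ by \Cref{lem:star-match}. Thus at most $\sum_v|D_v|\le 2k^2n$ edges contain an expanding pair. Deleting these leaves a $3$-graph with no $2k$-expanding pair, so every link has maximum degree $<2k$; now any vertex of degree $\ge 4k^2$ has, by \Cref{lem:star-match} again, a $k$-matching in its link, yielding $\Su_3(1,k)$. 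The idea you are missing is precisely this split of the edges according to whether they contain a heavy pair: it makes both halves immediate and removes any need for stability.
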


\begin{proof}
For the lower bound, we split $[n]$ into disjoint sets: $A$ of size $n-k\ge n/2$, and $B$ of size $k$. Let our $3$-graph consist of all edges with one vertex in $A$ and two vertices in $B$. This $3$-graph has $\Omega (k^2n)$ edges and is $\Su_3(1,k)$-free. Indeed, if we can find a copy of $\Su_3(1,k)$ each of its edges contains two vertices in $B$, one of which is not the common vertex, so it uses at least $k+1$ vertices of $B$, which has size $k$, a contradiction. This shows $\ex(n,\Su_3(1,k))= \Omega(k^2n)$.
 

For the upper bound, we will show that every $3$-graph $G$ with $4k^2n$ edges contains a copy of $\Su_3(1,k)$. Let $G$ be such a $3$-graph
and suppose towards a contradiction that it does not contain an $\Su_3(1,k)$. For each $v\in V(G)$ let $D_v$ denote the (2)-graph on $V$ whose edges are the $2k$-expanding pairs $Y$ such that $v \cup Y \in E(G)$. $D_v$ does not contain matchings and stars of size $k$; if $D_v$ contained a $k$-matching then $v$ and this matching would make an $\Su_3(1,k)$ in $G$; if $D_v$ contained a star of size $k$ then we can greedily extend each edge of the star by a new vertex to obtain an $\Su_3(1,k)$, since the edges are $2k$-expanding. Using \Cref{lem:star-match} this implies that $D_v$ can have at most $2k^2$ edges. The number of edges of $G$ containing a $2k$-expanding pair is upper bounded by $\sum_v |D_v|\le 2k^2n$, so if we delete all such edges we are left with a $3$-graph $G'$ with at least $2k^2n$ edges with no $2k$-expanding pairs of vertices. Now take a vertex $v$ with degree at least $3|E(G')|/n \ge 4k^2$ in $G'$; it cannot have a star of size $2k$ in its link graph, as then the pair $v$ and centre of the star would be $2k$-expanding. Using \Cref{lem:star-match} this means there must be a $k$-matching in its link graph, which together with $v$ forms an $\Su_3(1,k)$ in $G$, so we are done.
\end{proof}


We now proceed to the second type of sunflowers.

\begin{lem}\label{lem:S_3(2k)}
For $2 \le k \ll n$ we have $\ex(n,\Su_3(2,k))\approx kn^2.$
\end{lem}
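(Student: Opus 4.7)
My proof plan is as follows. Observe first that $\Su_3(2,k)$ consists of exactly $k$ triples all sharing a common pair, so a $3$-graph $G$ is $\Su_3(2,k)$-free if and only if every pair of vertices has codegree at most $k-1$ in $G$. The upper bound then follows from an immediate double count: summing pair-codegrees gives $3|E(G)| = \sum_{p} d_G(p) \le (k-1)\binom{n}{2}$, and hence $|E(G)| \lesssim kn^2$.

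For the matching lower bound, the plan is to construct a $3$-graph with $\Omega(kn^2)$ edges in which every pair has codegree at most $k-1$; I would do this by overlaying $k-1$ pairwise edge-disjoint partial Steiner triple systems on a common vertex set. Concretely, identify $V = \mathbb{Z}_n$ and for each $c \in \{0,1,\ldots,k-2\}$ set
$$T_c := \{\{i,j,\ell\}\subseteq \mathbb{Z}_n : i+j+\ell \equiv c \pmod n\}.$$
For any pair $\{i,j\}$, the third vertex of any edge of $T_c$ through it is forced to equal $c-i-j \pmod n$, so each $T_c$ is a partial Steiner triple system, and a direct count gives $|T_c| = \Theta(n^2)$. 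Since the sum of an edge's vertices determines $c$ uniquely, the sets $T_c$ are pairwise edge-disjoint, so their union has $\Omega(kn^2)$ edges and pair codegrees at most $k-1$, as required.

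There is no serious obstacle here: both directions are essentially soft. An alternative lower bound construction, valid in the range $k \le \sqrt{n/2}$ where \Cref{lem:linear} applies, is to take a linear $k$-graph on $n$ vertices with $\Omega(n^2/k^2)$ edges and replace each of its edges by the $\binom{k}{3}$ triples it contains; linearity bounds pair codegrees in the resulting $3$-graph by $k-2$, and the total edge count is again $\Omega(kn^2)$. This latter strategy of extracting structured $3$-graphs from linear hypergraphs of larger uniformity is precisely the analogue of what will be needed for the $4$-uniform sunflower constructions in the theorem that follows.
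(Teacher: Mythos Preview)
Your proof is correct. The upper bound is the same as the paper's (pigeonhole/double count on pair codegrees), and your $\mathbb{Z}_n$ construction for the lower bound is valid: each $T_c$ is indeed a partial Steiner triple system of size $\Theta(n^2)$, the families $T_c$ are pairwise edge-disjoint since the residue $i+j+\ell$ determines $c$, and so their union has $\Theta(kn^2)$ edges with all pair codegrees at most $k-1$.

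The paper's lower bound takes a different route: it fixes one partial Steiner triple system $S(2,3,n)$ and overlays $k-1$ \emph{random} copies of it obtained by permuting the vertex set, then argues by linearity of expectation that the union has $\Omega(kn^2)$ edges. Your construction is explicit and more elementary (no probability), and in fact slightly sharper since the $T_c$ are genuinely edge-disjoint rather than merely not collapsing too much in expectation. The paper's probabilistic overlay, on the other hand, is agnostic to the particular Steiner system used and generalises more readily --- this is the template it reuses for $\Su_4(3,k)$. Your alternative via \Cref{lem:linear} is also correct in its stated range and, as you note, is closer in spirit to the constructions used for $\Su_4(2,k)$.
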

\begin{proof}
To prove the lower bound, we consider the linear $3$-graph $S(2,3,n)$ on $[n]$ with $\Omega(n^2)$ edges, given by \Cref{lem:partial-Steiner}. Let $G$ be a union of $k-1$ random copies of $S(2,3,n)$, where each copy is obtained by randomly permuting the vertices of $S(2,3,n)$. 
Since each pair of vertices lies in at most one edge from each copy of $S(2,3,n)$, $G$ does not contain a copy of $\Su_3(2,k).$ A fixed triple is chosen with probability $\Omega (1/n)$ in a random copy of $S(2,3,n)$, independently between our $k-1$ copies. Thus the probability that a given triple is chosen in one of our $k-1$ copies is at least $\Omega(k/n)$ so the expected number of chosen triples is $\Omega(kn^2)$, giving $\ex(n,\Su_3(2,k))= \Omega(kn^2)$.

We now turn to the upper bound. Let $G$ be a $3$-graph with $kn^2$ edges. By averaging, there must exist a pair of vertices belonging to at least $k$ edges, which make a copy of $\Su_3(2,k)$ in $G$.
\end{proof}

\subsection{4-uniform case}
In this subsection we determine the behaviour of the Tur\'an number of $4$-uniform sunflowers, namely we prove \Cref{thm:4-uniform-sunflowers}. We begin with $\Su_4(1,k)$.

\begin{lem}\label{lem:4-uniform t=1}
For $2\le k \ll n$ we have $\ex(n,\Su_4(1,k))\approx k^2n^2.$
\end{lem}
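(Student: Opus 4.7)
For the lower bound, the plan is to adapt the $A$-$B$ partition construction from \Cref{lem:3-uniform-star}. Partition $[n]$ into $A$ of size $n-k$ and $B$ of size $k$, and let $G$ be the 4-graph of all 4-sets with exactly two vertices in each part. A direct count gives $|E(G)| = \binom{n-k}{2}\binom{k}{2} = \Omega(k^2 n^2)$. To see $G$ is $\Su_4(1,k)$-free, consider a putative center $v$: if $v\in A$, each of the $k$ petals uses two vertices of $B$, requiring $2k$ distinct vertices in a set of size $k$; if $v\in B$, each petal uses a distinct vertex from $B\setminus\{v\}$, requiring $k$ distinct vertices in a set of size $k-1$. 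Both are impossible.

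The upper bound mirrors the two-phase cleaning strategy of \Cref{lem:3-uniform-star}. Assume $|E(G)|\geq Ck^2n^2$ for a sufficiently large constant $C$ and, for contradiction, that $G$ contains no $\Su_4(1,k)$. Call a triple $T$ \emph{rich} if its codegree in $G$ is at least $4k$, and for each vertex $v$ let $D_v$ be the 3-graph on $V(G)$ whose edges are the rich triples $T$ with $\{v\}\cup T\in E(G)$. The key claim will be that each $D_v$ is $\Su_3(1,k)$-free: otherwise $D_v$ would contain such a sunflower with center $u$ and disjoint petals $\{a_i,b_i\}_{i=1}^{k}$, and since every triple $\{u,a_i,b_i\}$ is rich, one could greedily select, for each $i$, a vertex $y_i$ outside the at most $3k$ already used vertices $\{u,a_1,b_1,\ldots,a_k,b_k,y_1,\ldots,y_{i-1}\}$ with $\{u,a_i,b_i,y_i\}\in E(G)$; the $k$ edges $\{u,a_i,b_i,y_i\}$ would then share only $u$ and form a $\Su_4(1,k)$ in $G$, a contradiction. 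By \Cref{lem:3-uniform-star} this forces $|D_v|=O(k^2 n)$, and hence deleting every edge of $G$ containing a rich triple removes at most $\sum_v|D_v|=O(k^2 n^2)$ edges.

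Let $G'$ be the resulting subhypergraph. For $C$ sufficiently large we still have $|E(G')|=\Omega(k^2 n^2)$, and by construction every triple has codegree strictly less than $4k$ in $G'$. Fixing any $v$, the link $L_v$ inside $G'$ is then a 3-graph with maximum pair-codegree below $4k$; since $G'$ is still $\Su_4(1,k)$-free, $L_v$ admits no 3-matching of size $k$, so a maximum matching together with its saturated vertices gives a cover $S_v$ of size at most $3(k-1)$. Combining the cover with the pair-codegree bound yields $|L_v|\leq |S_v|\cdot (n-1)(4k)/2=O(k^2 n)$, and summing over $v$ gives $4|E(G')|=\sum_v|L_v|=O(k^2 n^2)$, contradicting the lower bound on $|E(G')|$ once $C$ is taken large enough.

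The part that will require the most care is calibrating the rich-triple threshold and identifying the right internal substructure of $D_v$: extending a mere star in $D_v$ (as in the 3-uniform case) would only yield a $\Su_4(2,k)$, so one must extend a full $\Su_3(1,k)$, and the threshold $4k$ is chosen so as to exceed the at most $3k$ vertices that need to be avoided when greedily picking the fresh completion vertices $y_i$. Once this balance is set, both the cleaning bound (via \Cref{lem:3-uniform-star}) and the concluding double count in $G'$ follow the template of the 3-uniform argument.
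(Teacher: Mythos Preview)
Your proof is correct. The lower bound construction and the first phase of the upper bound (defining $D_v$ as the $3$-graph of high-codegree triples in the link of $v$, showing it is $\Su_3(1,k)$-free via a greedy extension, and invoking \Cref{lem:3-uniform-star} to clean out all edges containing such a triple) match the paper's argument essentially verbatim.

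Where you diverge is in the second phase, and your route is in fact simpler. After cleaning, the paper introduces a further layer: it looks at $18k^2$-expanding pairs in $G'$, shows each such pair has a $3k$-matching in its (graph) link, argues that if these expanding pairs contain a star $S_k$ one can greedily build $\Su_4(1,k)$, and then does a second deletion step to reach a contradiction. You bypass this entirely: you observe that for every vertex $v$ the link $L_v$ in $G'$ has no $3$-matching of size $k$ (else $\Su_4(1,k)$ centred at $v$), so the vertices of a maximal matching form a cover $S_v$ of size below $3k$; then the pair-codegree bound in $L_v$ caps the degree of each $s\in S_v$ at $O(kn)$, giving $|L_v|=O(k^2n)$ and hence $|E(G')|=O(k^2n^2)$ by handshaking. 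This replaces the paper's star-of-expanding-pairs detour with a one-line matching/cover argument, at the modest cost of needing the constant $C$ to beat both the cleaning loss and the final count.
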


\begin{proof}
 We consider the lower bound first. 
We split the $n$ vertices into disjoint sets $A$ of size $n-k \ge n/2$ and $B$ of size $k$. 
Let $G$ be the $4$-graph consisting of edges which have two vertices in each of $A$ and $B$, so in total $G$ has $\approx k^2n^2$ edges. Note that $G$ is $\Su_4(1,k)$-free. Indeed, if we can find a copy of $\Su_4(1,k)$ each of its edges contains two vertices in $B$, one of which is not the common vertex, so it uses at least $k+1$ vertices of $B$, which has size $k$, a contradiction.

For the upper bound, we will show that every $4$-graph $G$ with $e \gg k^2n^2$ edges contains a copy of $\Su_4(1,k)$. Let $G$ be such a $4$-graph and suppose it does not contain a copy of $\Su_4(1,k)$. For each $v\in V(G)$, let $D_v$ denote the set of $3k$-expanding triples $Y$ such that $v \cup Y \in E(G)$. So $D_v$ is a $3$-graph. If some $D_v$ has at least $e/(2n) \gg k^2n$ edges then by \Cref{lem:3-uniform-star} we can find an $\Su_3(1,k)$ in $D_v$ and greedily extend it to an $\Su_4(1,k)$ in $G$, so we may assume that each $D_v$ has at most $e/(2n)$ edges. The number of edges of $G$ containing a $3k$-expanding triple is upper bounded by $\sum_v |D_v|\le e/2$ so if we delete all such edges we are left with a $4$-graph $G'$ with at least $e/2$ edges and no $3k$-expanding triple of vertices. 

Now look at pairs of vertices which are $18k^2$-expanding. Any such pair has no star of size $3k$ in its link graph as that would give a $3k$-expanding triple, so by \Cref{lem:star-match} it has a matching of size $3k$. If we can find a star of size $k$ formed by the $18k^2$-expanding pairs, then using the matchings we found in the link graphs we can once again greedily extend it into a copy of $\Su_4(1,k)$. Hence, the total number of $18k^2$-expanding pairs is at most $kn$. They can lie in at most $3k^2n^2$ different edges (since the third vertex we can choose in $n$ many ways but the fourth in at most $3k$, because there are no $3k$-expanding triples). Deleting all such edges from $G'$ we obtain $G''$ with at least $e/4 \ge 18k^2n^2$ edges, without $18k^2$-expanding pairs, which is a contradiction (since by density $G''$ must have an $18k^2$-expanding pair). 
\end{proof}

\textbf{Remark.} By induction this easily extends to higher uniformities, giving $\ex(n,\Su_r(1,k))\approx_r k^2n^{r-2}.$ 


We now turn to the second type of $4$-uniform sunflowers, namely $\Su_4(2,k).$

\begin{lem}\label{lem:4u-ds-m}
For $2 \le k\ll n $ we have $\ex(n,\Su_4(2,k))\approx k^2n^2.$
\end{lem}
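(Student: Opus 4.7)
My strategy is to prove matching upper and lower bounds of order $k^2 n^2$, exploiting a reduction to the $3$-uniform case, paralleling the approach used for $\Su_4(1,k)$.

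For the upper bound I would start from the observation that a $4$-graph $G$ contains $\Su_4(2,k)$ with kernel $\{u,v\}$ precisely when the link $L_v$ contains $\Su_3(1,k)$ with common vertex $u$: the $4$-edges $\{v,u,x_i,y_i\}$ with disjoint petals $\{x_i,y_i\}$ correspond exactly to the triples $\{u,x_i,y_i\}$ in $L_v$ forming a $\Su_3(1,k)$ centred at $u$. Hence $G$ is $\Su_4(2,k)$-free iff every $L_v$ is $\Su_3(1,k)$-free. If $|E(G)| \gg k^2 n^2$, the handshaking lemma gives $\sum_v |E(L_v)| = 4|E(G)| \gg k^2 n^2$, so some vertex $v$ has $|E(L_v)| \gg k^2 n$; Lemma 3.1 then produces a $\Su_3(1,k)$ in $L_v$, and hence $\Su_4(2,k)$ in $G$, a contradiction.

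For the lower bound I would use a Steiner-type construction. With block size $m=2k+1$ (the largest for which $K_{m-2}$ still has matching number $<k$), Lemma 2.2 supplies a linear $m$-uniform hypergraph $\mathcal{H}$ on $[n]$ with $\gtrsim n^2/k^2$ blocks, and I set
\[
    E(G) \;:=\; \bigcup_{H\in\mathcal{H}}\binom{H}{4}.
\]
Linearity of $\mathcal{H}$ means any two distinct blocks share at most one vertex, so no $4$-set lies in two blocks, and $|E(G)|=|\mathcal{H}|\binom{2k+1}{4}\gtrsim k^2n^2$. To verify $\Su_4(2,k)$-freeness, any pair $\{u,v\}$ lies in at most one block $H$, so the link of $\{u,v\}$ in $G$ is contained in the complete graph $K_{2k-1}$ on $H\setminus\{u,v\}$; this has matching number $k-1<k$, so $\{u,v\}$ cannot serve as the kernel of a $\Su_4(2,k)$.

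The hardest part will be covering the entire range $2\le k\ll n$ on the lower-bound side, because Lemma 2.2 only directly provides the required linear hypergraph when $2k+1\le\sqrt{n/2}$. For larger $k$ one needs either a near-optimal partial Steiner system with growing block sizes (e.g.\ via a R\"odl-nibble style argument) or a case split, for instance replacing the many-blocks construction by complete $4$-subgraphs of a single suitably sized set once $k$ becomes comparable to or larger than $\sqrt{n}$; I expect the clean asymptotic $\approx k^2n^2$ to survive in all cases but the bookkeeping here is the main technical hurdle.
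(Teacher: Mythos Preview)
Your upper bound is exactly the paper's argument. Your lower-bound idea --- cover the pairs of $[n]$ at most once by blocks of size about $2k$ and take all $4$-subsets of each block --- is also the paper's idea, but the implementations differ. You invoke the explicit linear hypergraph of \Cref{lem:linear}, which only exists for $2k+1\le\sqrt{n/2}$; the paper instead lays down $m=\Theta(n^2/k^2)$ random $2k$-cliques greedily, deleting from each new clique any pair already used (and then any edge not in a surviving $K_4$), and shows the expected number of $K_4$'s is $\Theta(k^2n^2)$. Because a pair lies in at most one clique, the $4$-edges through any pair live on at most $2k$ vertices, so there is no $\Su_4(2,k)$. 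This random-greedy version works uniformly for all $k\ll n$ with no case split and no nibble.

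The range gap you flag is real, and one of your two proposed patches is actually wrong. Taking ``complete $4$-subgraphs of a single suitably sized set'' for $k\gtrsim\sqrt{n}$ cannot work: a single block of size $2k+1$ contributes only $\Theta(k^4)$ edges, and even a vertex-disjoint union of $\Theta(n/k)$ such blocks gives only $\Theta(nk^3)$; both are $o(k^2n^2)$ throughout $\sqrt{n}\lesssim k\ll n$. Overlapping blocks are essential here. A R\"odl-nibble argument could in principle supply near-optimal partial $S(2,2k{+}1,n)$ systems for a wider range of $k$, but it is heavier machinery than needed and still delicate when $k$ is a constant fraction of $n$; the paper's elementary random construction is the cleaner fix.
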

\begin{proof}

We prove a lower bound first.
Let us say that a ($2$-)graph $G$ is \emph{good} if it has at most $2k$ vertices and each of its edges is contained in a copy of $K_4$. If we can show that there exists 
$m$ edge disjoint copies of good graphs $G_1,\ldots,G_m$ on the same vertex set $[n]$, with at least $\Theta(n^2k^2)$ copies of $K_4$ in total among $G_1,\ldots, G_m$, then we would be done. Indeed, we can construct a $4$-graph $H$ on $[n]$ by putting a $4$-edge in $H$ for any $4$ vertices which induce a copy of $K_4$ in one of $G_1,\ldots, G_m$; this $4$-graph has at least $\Theta(n^2k^2)$ edges, and by assumption each vertex pair $P$ in $[n]$ is an edge of at most one graph $G_i$, and therefore all the $4$-edges in $H$ which contain $P$ contain only vertices from $G_i$ of which there are at most $2k$, so no pair can be the centre of a sunflower $\Su_4(2,k)$ which has $2k+2$ vertices.

Now we show the existence of such $G_1,\ldots,G_m$, for $m=\frac{n^2}{48k^2}$. We choose $2k$ vertices uniformly at random, with repetition from $[n]$ and choose $G_1$ to be the complete graph on these $2k$ vertices. Suppose we obtained graphs $G_1,\ldots, G_i$, where $i<m$. Choose again a set of uniformly random $2k$ vertices, and choose $G_{i+1}$ to be the complete graph on these vertices from which we remove all the edges in $G_1,\ldots, G_i$ and after this we remove all edges not participating in a $K_4$.

Notice that for each graph $G_{i}$, with $i\in [m]$, the expected number of $K_4$'s is at least $$\binom{2k}{4}\left(1-6i \cdot (2k/n)^2\right)\cdot \frac{1}{2}\ge \Theta(k^4),$$ 
since probability that $4$ randomly sampled vertices are different is at least $1-6/n\ge 1/2$ and by a union bound the probability that one of its $6$ edges already got chosen in some $G_1,\ldots, G_{i}$ is at most $6i \cdot (2k/n)^2 \le 1/2$.
So the total expected number of $K_4$'s among $G_1,\ldots, G_m$ is by linearity of expectation at least $m\cdot \Theta(k^4)=\Theta (n^2k^2)$ and we are done.

For the upper bound, there must be a vertex $v$ with degree $\gg k^2n$ in any graph on $\gg k^2n^2$ edges, so we can find an $\Su_3(1,k)$ in its link graph, by Lemma \ref{lem:3-uniform-star}, which together with $v$ forms a copy of $\Su_4(2,k)$.
\end{proof}

Finally, we deal with the third and last kind, namely $\Su_4(3,k).$ 

\begin{lem}
For $2 \le k\ll n$ we have $\ex(n,\Su_4(3,k))\approx kn^3.$
\end{lem}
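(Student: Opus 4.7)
The plan is to combine a trivial averaging upper bound with a probabilistic lower bound based on randomly permuted copies of a partial Steiner system, exactly mirroring the proof of Lemma~\ref{lem:S_3(2k)} for $\Su_3(2,k)$.

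For the upper bound I would simply double count incidences between triples and edges: in any $4$-graph $G$ on $n$ vertices, $\sum_{T\in\binom{V(G)}{3}} d(T) = 4|E(G)|$, and since there are only $\binom{n}{3}=O(n^3)$ triples, if $|E(G)|\gg kn^3$ the average codegree is $\gg k$, so some triple has codegree at least $k$. The $k$ edges through such a triple then form a copy of $\Su_4(3,k)$ with kernel $T$.

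For the lower bound I would apply the partial Steiner system $S(3,4,n)$ given by Lemma~\ref{lem:partial-Steiner}, which has $\Omega(n^3)$ edges and contains every triple in at most one edge. Let $H$ be the union of $k-1$ independent random copies of $S(3,4,n)$, each obtained by applying a uniformly random permutation to $[n]$. Since each copy contributes at most one edge through any given triple, every triple has codegree at most $k-1$ in $H$, so $H$ is automatically $\Su_4(3,k)$-free. For the edge count, a fixed $4$-set appears in a given random copy with probability $p=|E(S(3,4,n))|/\binom{n}{4}=\Omega(1/n)$, hence in the union with probability $1-(1-p)^{k-1}$. The hypothesis $k\ll n$ makes this $\Omega(kp)=\Omega(k/n)$ in the regime $kp\le 1$ and $\Omega(1)$ otherwise, so in either case linearity of expectation yields $\mathbb{E}|E(H)|\gtrsim kn^3$, and some realisation has the claimed size. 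I do not anticipate any real obstacle: the upper bound is a one-line averaging argument, and the lower bound is a verbatim adaptation of the random-Steiner construction of Lemma~\ref{lem:S_3(2k)} with $S(2,3,n)$ replaced by $S(3,4,n)$ and the relevant exponent shifted by one; the only step needing any care is the elementary estimate $1-(1-p)^{k-1}\gtrsim \min(kp,1)$.
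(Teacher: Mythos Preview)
Your proposal is correct and follows essentially the same approach as the paper: a pigeonhole/averaging argument for the upper bound, and a union of $k-1$ randomly permuted copies of the partial Steiner system $S(3,4,n)$ for the lower bound. Your treatment of the probability $1-(1-p)^{k-1}$ is slightly more explicit than the paper's, but the idea is identical.
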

\begin{proof}
 For the lower bound, we take a $4$-graph $G$ which is a union of $k-1$ random copies of a $4$-graph $S(3,4,n)$ on $n$ vertices with $\Omega(n^3)$ edges, given by \Cref{lem:partial-Steiner}. A single triple of vertices lies in at most one edge for each copy so in total in at most $k-1$ edges; this means there is no $\Su_4(3,k)$ in $G$. Each fixed quadruple is an edge of $G$ with probability $\Omega(1/n)$, independently between different choices. Thus the probability that a quadruple is chosen in $G$ is at least $\Omega (k/n)$, 
so the expected number of chosen triples is at least $\Omega(kn^3)$. Therefore, $\ex(n,\Su_4(3,k))\ge \Omega( kn^3).$ 

 Let $G$ be a $4$-graph on $n$ vertices with $kn^3$ edges. By the pigeonhole principle there is a triple of vertices belonging to at least $k$ edges, which makes an $\Su_4(3,k)$. This shows $\ex(n,\Su_4(3,k))\le kn^3.$
\end{proof}

The above three results establish \Cref{thm:4-uniform-sunflowers} when $k\ll n$. This shows that $\ex(n,\Su_4(i,k))\approx n^4$ for any $k = cn$ for some small enough constant $c$. The bound in the remaining range, namely when $k \ge cn$, is immediate since $\Su_4(i,k)\subseteq \Su_4(i,k')$ for any $k \le k'$ in the case of lower bounds and by the trivial bound $\ex(n,\Su_4(i,k))\le \binom{n}{4}$ in the case of upper bounds, since we are only interested in bounds up to constant factor.

\section{Unavoidability, sparse regimes} \label{sec:sparse-regime}
In this section we prove \Cref{thm:main} for $e \ll n^3$, so for the majority of the first two regimes. We note that in order to prove \Cref{thm:main} it is sufficient to prove it for the regimes $e \ll n^2, n^2 \ll e \ll n^3$ and $n^3\ll e \ll n^4$ since in the remaining cases $e\approx n^2, e \approx n^3$ the bounds of the regimes match (up to a constant factor) and $\un_4(n,e)$ is monotone in $e$. We begin with the sparsest regime $e \ll n^2,$ which is quite simple to handle but illustrates the general approach.


\begin{thm}\label{thm:main-reformulation-i}
    For $1\le e\ll n^2$, we have $\un_4(n,e)=1$.
\end{thm}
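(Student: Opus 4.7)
The plan is to establish the two inequalities $\un_4(n,e)\ge 1$ and $\un_4(n,e)\le 1$ separately. The lower bound is immediate from the definition: a single edge is a subhypergraph of every non-empty $4$-graph, so it is $(n,e)$-unavoidable for any $e\ge 1$. Hence the real content is the upper bound: I need to show that no $4$-graph $\HH$ with $|E(\HH)|\ge 2$ is $(n,e)$-unavoidable when $e\ll n^2$, equivalently that $\ex(n,\HH)\ge e$ for every such $\HH$.

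Since any graph which avoids $\HH$ also avoids every $\HH'$ containing $\HH$ as a subhypergraph, it suffices to handle the case $|E(\HH)|=2$. Up to isomorphism there are precisely four such $\HH$, parametrised by the intersection size $s\in\{0,1,2,3\}$ of its two edges; these are exactly the sunflowers $\Su_4(s,2)$. For each value of $s$ I would exhibit an $n$-vertex $4$-graph $G_s$ with $\Omega(n^2)$ edges containing no two edges meeting in exactly $s$ vertices:
\begin{itemize}
\item For $s=0$, take all $4$-edges through a fixed vertex: this gives $\binom{n-1}{3}$ edges, and any two share at least one vertex.
\item For $s=1$, take all $4$-edges through a fixed pair of vertices: this gives $\binom{n-2}{2}=\Omega(n^2)$ edges, and any two share at least two vertices.
\item For $s=2$, invoke \Cref{lem:partial-Steiner} with $t=2$ and $k=4$ to obtain a linear $4$-graph with $\Omega(n^2)$ edges; by linearity any two edges share at most one vertex.
\item For $s=3$, invoke \Cref{lem:partial-Steiner} with $t=3$ and $k=4$ to obtain a $4$-graph with $\Omega(n^3)$ edges in which every triple lies in at most one edge, so any two edges share at most two vertices.
\end{itemize}

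Because each $G_s$ has at least $\Omega(n^2)$ edges, choosing the hidden constant in $e\ll n^2$ smaller than the minimum of the four implicit constants ensures $|E(G_s)|\ge e$ simultaneously in all cases. Deleting edges of $G_s$ down to exactly $e$ preserves the relevant pairwise intersection property, so the resulting $n$-vertex $4$-graph with $e$ edges avoids $\Su_4(s,2)$, and hence avoids any $\HH$ of which it is a subgraph. There is essentially no obstacle in this argument: everything reduces to the four explicit constructions above, the only delicate point being that the threshold constant hidden in $\ll$ must be taken small enough to simultaneously dominate all four of the $\Omega(n^2)$ bounds.
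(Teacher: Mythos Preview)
Your proof is correct. The paper's argument is a slight streamlining of yours: rather than treating the four two-edge hypergraphs $\Su_4(s,2)$ separately, it uses only two test graphs---the linear $4$-graph $S(2,4,n)$ and the graph of all edges through a fixed pair---and observes that any unavoidable $\HH$ must embed in \emph{both}, forcing any two edges of $\HH$ to simultaneously share at most one and at least two vertices, hence $|E(\HH)|\le 1$. Your version makes the same point by explicitly exhibiting, for each intersection pattern, a host graph avoiding it; two of your four constructions ($s=1$ and $s=2$) are exactly the paper's pair, and the other two ($s=0,3$) are redundant given those. The paper's framing is marginally slicker, but the underlying idea---Steiner-type graphs to cap intersections from above, fixed-subset graphs to cap them from below---is identical.
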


\begin{proof}
Starting with the upper bound, let $\mathcal{H}$ be an $(n,e)$-unavoidable graph where $e\le cn^2$ for some sufficiently small constant $c>0$. This means it is contained in any $4$-graph on $n$ vertices with at least $e$ edges and our task is to show that it must consist of only one edge. To see this, observe that the $4$-graph $S(2,4,n)$, given by \Cref{lem:partial-Steiner}, has $\Omega(n^2)\ge e$ edges so it must contain $\mathcal{H}$ as a subgraph. This forces $\mathcal{H}$ to be linear. Similarly, the $n$-vertex $4$-graph which consists of all edges which contain two fixed vertices has $\Omega(n^2) \ge e$ edges so also has $\mathcal{H}$ as a subgraph. This forces any two edges of $\mathcal{H}$ to intersect in at least $2$ vertices. Since $\mathcal{H}$ must also be linear this means it can have at most $1$ edge. 

The lower bound is immediate, since any $n$-vertex graph with $e$ edges contains an edge ($e \ge 1$) so a single edge graph is $(n,e)$-unavoidable showing $\un_4(n,e)\ge 1$.
\end{proof}

We now turn to the upper bound for the second regime.

\begin{thm}\label{thm:middle-upper-bound}
For $n^2\ll e \ll n^3$ we have $\un_4(n,e) \lesssim \min \{(e/n^2)^{3/4},(e/n)^{1/3}\}$.
\end{thm}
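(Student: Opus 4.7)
Plan:

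The plan is to show that any $(n,e)$-unavoidable $\HH$ with $|E(\HH)| > C\min\{(e/n^2)^{3/4}, (e/n)^{1/3}\}$ (for a suitable large $C$) must embed into certain carefully chosen structured hosts on $n$ vertices with $\sim e$ edges, from which we will derive a contradiction by bounding $|E(\HH)|$. The backbone is a structural reduction: since $e \ll n^3$, the ``monostar'' host $H_\star$, obtained by taking any $e$ of the $\binom{n-1}{3} \ge e$ $4$-edges through a fixed vertex $v$, has $e$ edges; so $\HH \subseteq H_\star$ forces all edges of $\HH$ to share a common vertex $w$, giving $\HH = \{w\}\cup L$ for a $3$-graph $L$ on $\le n-1$ vertices with $|E(L)| = |E(\HH)|$. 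A second host, a random $e$-edge subgraph of the partial Steiner system $S(3,4,n)$ from \Cref{lem:partial-Steiner} (which has $\Omega(n^3) \ge e$ edges with every triple in at most one edge), then forces $\HH$ to be $3$-linear, equivalently $L$ to be a linear $3$-graph.

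For the bound $|E(L)| \lesssim (e/n^2)^{3/4}$, I would use a bipartite-block host $H_B$: partition $[n]$ into blocks of size $\ell \approx (e/n^2)^{1/2}$ and include all $4$-edges with exactly two vertices in each of two distinct blocks, yielding $|H_B| \sim \binom{n/\ell}{2}\binom{\ell}{2}^2 \sim e$. Under the embedding $w\mapsto v \in B_1$, the edges of $L$ split as (one vertex in $A \subseteq B_1\setminus\{v\}$) plus (two vertices in some $B^j \subseteq B_j$); combining linearity of $L$ with a Zarankiewicz-type count on the pairs spanned within each $B^j$, together with the vertex-count constraint coming from the clique-block host below, would yield the claimed bound.

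For the bound $|E(L)| \lesssim (e/n)^{1/3}$, I would use a clique-block host $H_C$: partition $[n]$ into blocks of size $k \approx (e/n)^{1/3}$ and take all $4$-edges within a block, yielding $|H_C| \sim (n/k)\binom{k}{4} \sim e$. The center $w$ maps into one block, so $|V(L)| \le k-1$. Linearity alone only gives $|E(L)| \le \binom{k}{2}/3 \sim (e/n)^{2/3}$; to reach $(e/n)^{1/3}$, I would iterate the structural reduction on $L$ itself, forcing $L$ to be a $3$-uniform star with a simple-graph link (via an analogous $3$-uniform monostar host, appropriately combined with the $k$-vertex clique restriction), which reduces the question to the $2$-uniform statement $|E(L)| \lesssim k = (e/n)^{1/3}$.

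The main obstacle I expect is the $(e/n)^{1/3}$ bound: the naive clique-block argument falls short by a factor of $k$, and closing this gap requires an iterated star reduction that is delicate because the natural $3$-uniform monostar host has only $O(n^2) \ll e$ edges globally, so it must be combined with the clique-block structure so that its restriction to each $k$-vertex block serves as the right subgraph-restrictor. Executing this iteration cleanly and arranging the block sizes and counting to match the exponents $3/4$ and $1/3$ simultaneously is the key technical challenge.
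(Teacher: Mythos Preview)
Your first two hosts (the monostar forcing a common vertex $w$ with link $3$-graph $L$, and $S(3,4,n)$ forcing $L$ to be linear) are correct and match the paper. But both subsequent steps have genuine gaps. For the $(e/n)^{1/3}$ bound, you correctly observe that clique-blocks of size $k=(e/n)^{1/3}$ only give $|V(L)|\le k$ and hence at best $|E(L)|\lesssim k^2$. Your proposed fix, a per-block $3$-uniform monostar, cannot be executed: with block size $k$ such a host has only $(n/k)\binom{k-1}{3}\approx nk^2=n^{1/3}e^{2/3}<e$ edges throughout $e\gg n^2$; enlarging the block size to $k'\approx(e/n)^{1/2}$ reaches $e$ edges, but then nothing prevents $w$ from being embedded at the designated centre of its block, so the host merely re-proves $|V(L)|\le k'$ and does not force $L$ to be a $3$-star. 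For the $(e/n^2)^{3/4}$ bound, your $2{+}2$ block host with block size $\ell=(e/n^2)^{1/2}$ is valid, but combined with the clique-block it only shows that $L$ has a vertex cover $A'$ of size $\le\ell$ with the remaining vertices partitioned into $\ell$-blocks so that every edge has one vertex in $A'$ and two in a single block. A linear $3$-graph of this form can still have $\approx\ell k/2$ edges (take $\ell$ centres, and for each centre a matching on the remaining $k-\ell$ vertices, with the $\ell$ matchings edge-disjoint and supported on groups of size $\ell$); this is strictly larger than both target bounds throughout the range, so no Zarankiewicz count on the block-pairs will close the gap.

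The paper takes a different route. Setting $k\approx(e/n^2)^{1/2}$ so that $e\approx k^2n^2$, it deploys three further hosts to pin down $L$: a blown-up linear $k$-graph forces every component of $L$ to have at most $k$ vertices; a $2{+}2$ host with one part of size $k$ forces a vertex cover $C$ with $|C|\le k$; and, crucially, a host built from $k^2$ disjoint designated pairs each joined to every outside pair forces (after discarding $O(k)$ edges) that $L$ is a union of copies of $\Su_3(1,\cdot)$ centred at vertices of $C$ whose petal-pairs together form a single matching of size $\le k^2$. Only with this structure do two final hosts finish: $k$ disjoint copies of $K_{\sqrt k,\sqrt k}$, each extended by all outside pairs, force $|C|\lesssim\sqrt k$, whence $|E(L)|\lesssim k^{3/2}=(e/n^2)^{3/4}$; and the host of all triples inside a set of size $(k^2n)^{1/3}$ joined to every outside vertex yields $|E(L)|\le(k^2n)^{1/3}=(e/n)^{1/3}$ directly from the matching structure. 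The ``$k^2$ pairs'' host is the idea your plan is missing.
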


\begin{proof}
Let $H$ be an $(n,e)$-unavoidable $4$-graph and let $k=c\sqrt{e}/n$, for $c>0$ large enough. This means that any $n$-vertex $4$-graph with at least $\Omega(k^2n^2)$ edges must contain $H$ (by choosing $c$ large enough, since $e=n^2k^2/c^2$). Note that the regime bounds imply $1 \ll k \ll \sqrt{n}$. To show the bound it suffices to show that $|E(H)| \le \min \{2k^{3/2},(k^2n)^{1/3}\}.$ 
In order to do this, we will consider a number of examples of $4$-graphs with more than $e$ edges. Each of them will reveal some additional information on how $H$ should look like and allow us to conclude it can't have more than the claimed number of edges.
\begin{itemize}
    \item The $4$-graph $S(3,4,n)$, given by \Cref{lem:partial-Steiner}, has $\Omega(n^3) \ge e$ edges. So it forces $H$ to have no two edges intersecting in three vertices.
    \item The graph with one special vertex contained in all of $\binom{n-1}{3}$ possible edges implies that all edges of $H$ must contain a common vertex, say $v$. Let $H_3$ be the link graph of $v$ (so a $3$-graph).
    \item We take the graph obtained from an $n$-vertex linear $k$-graph by taking every 4-subset of every edge in this $k$-graph as an edge. Note that by \Cref{lem:linear}, since $k \ll \sqrt{n}$, we can find such a 4-graph with $\Omega(k^2n^2)$ edges. This implies that $H_3$ splits into components of size at most $k$ each, since $v$ is contained in all $4$-edges and by construction any two $4$-edges which intersect in more than one vertex belong to a single $k$-edge of our starting linear $k$-graph. 
    \item Let us take the hypergraph with sets of vertices $V_1$ of size $k$ and $V_2$ of size $n-k$ such that we choose any pair of vertices in $V_1$ and any pair of vertices in $V_2$ and join them in an edge. This 4-graph has $\Omega (k^2n^2)$ edges. Since every edge has at least two vertices in $V_1$ this means every edge of $H_3$ must have at least one vertex in $V_1$; in other words, $H_3$ has a cover $C$ of size at most $k$ and (in particular) $H_3$ has at most $k$ components.
    \item Let us take a set $S$ of $2k^2 \ll n$ vertices and split them into $k^2$ pairs. We join each of these $k^2$ pairs with every pair among $n-|S|$ vertices outside of $S$ into a $4$-edge. This gives us an $n$ vertex $4$-graph with $\Omega (k^2n^2)$ edges, so we must be able to find a copy of $H$ inside it. If $v$ is embedded inside $S$ and we let $w$ be its pair, then $v$ and $w$ belong to each edge of $H$. 
    We claim this implies that $H$ has at most $k$ edges. To see this observe first that every edge of $H_3$ contains $w$, so $H_3$ only has a single component. By the third point we know it consists of at most $k$ vertices. We further know, by the first point, that if we remove $w$ we get a matching, since otherwise we would have two edges of $H$ which intersect in $3$ vertices. This implies $H$ has at most $k$ edges and we are done. 
    So, $v$ must be embedded outside of $S$. If a vertex of $C$ is in $S$ then it participates in at most one edge of $H$ (since we know each such edge contains the vertex of $C$, its pair in $S$ and $v$ and there is only one edge of $H$ containing any triple of vertices), so such vertices contribute at most $k$ edges. If we remove these edges from $H$ we know that in the remaining $4$-graph upon removing $v$ and the vertex of $C$ from an edge we obtain one of our pairs in $S$. 
\end{itemize}
Since we removed at most $k\ll \min \{k^{3/2},(k^2n)^{1/3}\}$ edges, so at most a constant proportion of edges in $H$, we may assume we started with $H$ in which such edges did not exist.
Putting together the observations so far we know that $H$ has a fixed vertex $v$ in all edges, its link graph is the $3$-graph $H_3$ which consists of at most $k$ copies of a subhypergraph of $\Su_3(1,k)$, whose centres are vertices of $C$ and whose petals upon removal of the centre vertices give a matching $M$ (our pairing of vertices in $S$) which in total has size at most $k^2$. The following two further examples provide us with one of the desired bounds each.
    
\begin{itemize}
    \item Let $V_1$ induce $k$ disjoint copies of $K_{\sqrt{k},\sqrt{k}}$, we extend each edge of this graph into $4$-edges by adding every possible pair of the remaining vertices (the set of which we denote by $V_2$). Since $|V_1|=2k^{3/2} \ll n$ if we set $|V_2|=n-|V_1|$ this $4$-graph will have $n$ vertices and $\Omega(k^2n^2)$ edges, so contains $H$. Let us consider the edges of $M$ containing a vertex embedded in $V_1$. There can be at most $2k^{3/2}$ such edges since $|V_1|=2k^{3/2},$ so upon deleting all corresponding edges of $H$ we are left with a subgraph of $H$ in which $v$ and $C$ got embedded into $V_1$ (or we are left with an empty graph). But this implies 
    $|C|\le 2\sqrt{k}$, so again there are at most $2k^{3/2}$ edges of $H$ remaining (since we know that if we fix a vertex from $C$, in addition to $v$, their link graph is a matching of size at most $k$). It follows that $|E(H)|\le 4k^{3/2}$ giving us the first part of the result. 
    
    \item Take a set $V_1$ of $(k^2n)^{1/3} \ll n$ vertices and let $V_2$ be the set of remaining vertices. We make a $4$-graph by taking any triple in $V_1$ and a single vertex in $V_2$. This gives us $\Omega (k^{2}n^2)$ edges and implies $|E(H)|\le (k^2n)^{1/3}$ since among every pair of vertices in $M$ at least one must be in $V_1$.
\end{itemize} 
This completes the proof.
\end{proof}





The rest of this section is devoted to the upper bound part of the following theorem, as the lower bound follows from \Cref{thm:middle-upper-bound}. Analysing the above proof narrows down the possibilities for an optimal unavoidable graph significantly, leading us to $\St_4(\sqrt{k},k,1)$ as a natural candidate for an optimal unavoidable graph. This indeed turns out to be the case as a consequence of the following result.
\begin{thm}\label{thm:middle-lower-bound}
For $2 \le k \le n^{2/3}$ we have $\ex(n,\St_4(\sqrt{k},k,1))\approx \max\{k^2n^2,k^{9/2}n\}$.
\end{thm}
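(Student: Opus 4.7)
The lower bound comes from two independent constructions. For the $k^2 n^2$ term, observe that $\St_4(\sqrt{k}, k, 1)$ contains $\Su_4(2, k)$ as a subhypergraph (take the pair $\{v, u_1\}$ as kernel and the $k$ petal pairs beneath $u_1$), so the bound follows immediately from \Cref{lem:4u-ds-m}. For the $k^{9/2} n$ term, take the disjoint union of $\lfloor n/(2 k^{3/2})\rfloor$ complete $4$-graphs each on $2 k^{3/2}$ vertices, giving $\Theta(k^{9/2} n)$ edges; since $\St_4(\sqrt{k}, k, 1)$ has $1 + \sqrt{k} + 2 k^{3/2}$ vertices and is ``edge-connected'' (all edges share the apex $v$), it cannot be embedded into any component of size only $2 k^{3/2}$.

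For the upper bound, let $G$ be $\St_4(\sqrt{k}, k, 1)$-free on $n$ vertices. First, I would peel off connected components (in the edge-intersection sense) of size less than $2 k^{3/2}$: these collectively hold at most $\lfloor n/(2 k^{3/2})\rfloor \cdot \binom{2 k^{3/2}}{4} \lesssim k^{9/2} n$ edges. The remaining task reduces to the following density bound: for every $\St_4(\sqrt{k}, k, 1)$-free component $C$ on $m \ge 2 k^{3/2}$ vertices, one has $|E(C)| \lesssim k^2 m^2$. Summing over all such large components $C$ with $m_i$ vertices then gives $\sum_i k^2 m_i^2 \le k^2 n \cdot \max_i m_i \le k^2 n^2$, yielding $\ex(n, \St_4(\sqrt{k}, k, 1)) \lesssim k^2 n^2 + k^{9/2} n \lesssim \max\{k^2 n^2, k^{9/2} n\}$ overall.

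To prove the density bound, assume toward contradiction that $|E(C)| \gg k^2 m^2$ for such a component $C$. By averaging, pick $v \in V(C)$ with $|L_v| \ge 4|E(C)|/m \gg k^2 m$, where $L_v$ is the link $3$-graph of $v$. It suffices to locate $\sqrt{k}$ vertex-disjoint copies of $\Su_3(1, k)$ in $L_v$, as together with $v$ these produce the required $\St_4(\sqrt{k}, k, 1)$. Following the codegree case analysis of \Cref{lem:4-uniform t=1}, I would first delete all edges of $G$ that contain a triple of codegree $\ge k$; a handshake-type count bounds this loss to a negligible fraction provided $m$ is sufficiently large. In the remaining subgraph, every pair-link has maximum degree less than $k$, so \Cref{lem:star-match} converts any pair-codegree of suitable order into a matching of size $k$. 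A double-counting via $\sum_u \mathrm{codeg}_G(\{v, u\}) = 3|L_v|$ then produces $\sqrt{k}$ pairs $\{v, u_1\}, \ldots, \{v, u_{\sqrt{k}}\}$ through $v$ of sufficient codegree, from whose pair-links vertex-disjoint matchings of size $k$ are extracted greedily; the hypothesis $m \ge 2 k^{3/2}$ ensures there is room for all $\sim 2 k^{3/2}$ vertices that $\St_4(\sqrt{k}, k, 1)$ occupies.

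The principal obstacle is controlling the iterative greedy losses in the matching extraction: across the $\sqrt{k}$ iterations, each removing $2k+1$ vertices, a given pair-link can shed up to $\sim (2k+1) \cdot k \cdot \sqrt{k} \approx k^{5/2}$ edges. A naive argument therefore demands pair-codegrees of order $k^{5/2}$, which via the double-counting yields only a bound of order $k^{5/2} m^2$. Closing the extra $\sqrt{k}$ factor to reach $k^2 m^2$ will likely require a finer case split---segregating very high-codegree pairs into a preprocessing phase so that they contribute to $\St_4(\sqrt{k}, k, 1)$ directly rather than via iteration---so that the relevant codegree threshold in the double-counting step can be reduced to $\sim k^2$.
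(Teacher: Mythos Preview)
Your lower bound is correct; the two constructions you give are more direct than the paper's route (which deduces the lower bound from the unavoidability upper bound of \Cref{thm:middle-upper-bound}), but they work fine.

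Your upper-bound strategy, however, contains a genuine error: the density claim ``every $\St_4(\sqrt{k},k,1)$-free component on $m \ge 2k^{3/2}$ vertices has $\lesssim k^2 m^2$ edges'' is false. Take $\lfloor m/(2k^{3/2})\rfloor$ cliques of size $2k^{3/2}$ and link consecutive ones by a single $4$-edge. This connected $4$-graph has $\approx k^{9/2}m$ edges, which exceeds $k^2m^2$ whenever $m\ll k^{5/2}$; and it is $\St_4(\sqrt{k},k,1)$-free, since every edge of the star passes through the centre $v$, so after discarding the at most two bridging edges through $v$ the remainder of the star (which still has more than $2k^{3/2}$ vertices) would have to sit inside a single clique. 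The correct per-component bound is $\lesssim \max\{k^2m^2,k^{9/2}m\}$ --- but that is exactly the theorem you are trying to prove with $n$ replaced by $m$, so the component decomposition gains nothing.

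The point is that the $k^{9/2}n$ term cannot be quarantined to small pieces; it arises intrinsically in the problem of finding $\sqrt{k}$ vertex-disjoint copies of $\Su_3(1,k)$ in a link $3$-graph. The paper's key move is to take the \emph{triple} codegree threshold to be $3k^{3/2}$ rather than $k$. After deleting edges containing a $3k^{3/2}$-expanding triple, every pair in the link $3$-graph of a high-degree vertex has codegree at most $3k^{3/2}$, so pair-links contain matchings of size $3k^{3/2}$ (not merely $k$), which comfortably absorbs the $\sim 2k^{3/2}$ previously used vertices of the greedy extraction. The $k^{9/2}$ term then enters in the complementary sub-case where fewer than $\sqrt{k}$ vertices of the link $3$-graph have degree $\ge 18k^3$: after removing those, each of the $\lesssim k^{3/2}$ already-used vertices has degree $\le 18k^3$, so they touch $O(k^{9/2})$ edges --- precisely the second term (this is \Cref{lem-pairs-of-stars}). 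Your ``missing $\sqrt{k}$'' diagnosis is in the right spirit, but the fix is not a preprocessing of high-codegree \emph{pairs}; it is raising the \emph{triple} threshold to order $k^{3/2}$ and letting the $k^{9/2}$ term carry its weight. The paper also has to handle a second case in which many edges contain a $3k^{3/2}$-expanding triple, which requires a further layer of argument (Claims~A--C in the proof of \Cref{thm:middle-lower-bound}).
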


Before proving this result let's see why it gives the desired lower bound for the unavoidability problem. We want to show that there is an $(n,e)$-unavoidable $4$-graph with $\gtrsim \min \{(e/n^2)^{3/4},(e/n)^{1/3}\}$ edges, for any $n^2 \ll e \ll n^{3}$. To do this we choose $k$ as large as possible, so that $e \gg \max\{k^2n^2,k^{9/2}n\}$, which means that $k \gtrsim \min \{({e}/{n^2})^{1/2},({e}/{n})^{2/9}\}$. By our choice of $k$ the above theorem applies and tells us that any $4$-graph with $n$ vertices and $e$ edges contains $\St_4(\sqrt{k},k,1)$, i.e.\ it is $(n,e)$-unavoidable. This implies there is an $(n,e)$-unavoidable $4$-graph with $k^{3/2} \gtrsim \min \{({e}/{n^2})^{3/4},({e}/{n})^{1/3}\}$ edges, as desired. So, combining \Cref{thm:middle-lower-bound} and \Cref{thm:middle-upper-bound} we obtain the desired result for the middle range.

\begin{thm}\label{thm:main-reformulation-ii}
    For $n^2\ll e \ll n^3$, we have $\un_4(n,e) \approx \min \big\{({e}/{n^{2}})^{3/4},\left({e}/{n}\right)^{1/3}\big\}$.
\end{thm}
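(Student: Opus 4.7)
The lower bound $\ex(n, \St_4(\sqrt k, k, 1))\gtrsim\max\{k^2 n^2, k^{9/2} n\}$ follows from two constructions. A single branch of $\St_4(\sqrt k, k, 1)$ --- the central vertex $v$, a branch centre $u_1$, and its $k$ matching pairs --- is a copy of $\Su_4(2,k)$, so the extremal $\Su_4(2,k)$-free $4$-graph produced by \Cref{lem:4u-ds-m} is automatically $\St_4(\sqrt k, k, 1)$-free and has $\Omega(k^2 n^2)$ edges. Separately, the disjoint union of $\lfloor n/(2k^{3/2})\rfloor$ complete $4$-graphs on $2k^{3/2}$ vertices each has $\Theta(k^{9/2} n)$ edges; any copy of $\St_4(\sqrt k, k, 1)$ requires $1+\sqrt k+2k^{3/2}$ distinct vertices all inside one block (since the common central vertex forces a single connected component), exceeding the block size.

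For the upper bound, let $G$ be a $4$-graph on $n$ vertices with $e \gg \max\{k^2 n^2, k^{9/2} n\}$. The plan is to locate a central vertex $v$ such that the link $3$-graph $L_v$ contains $\sqrt k$ vertex-disjoint copies of $\Su_3(1,k)$; adjoining $v$ to each of these then yields a copy of $\St_4(\sqrt k, k, 1)$. The core reduction is thus a $3$-uniform ``many disjoint sunflowers'' analogue of \Cref{lem:3-uniform-star}: any $3$-graph $H$ with sufficiently many edges contains $\sqrt k$ vertex-disjoint copies of $\Su_3(1,k)$. One would prove this by taking a maximal vertex-disjoint family $\mathcal F$ of $\Su_3(1,k)$'s, spanning a vertex set $U$ of size at most $2k^{3/2}$. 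Assuming $|\mathcal F|<\sqrt k$, the edges of $H$ are classified by $|e\cap U|$: those disjoint from $U$ form a $\Su_3(1,k)$-free subgraph bounded by $O(k^2 n)$ via \Cref{lem:3-uniform-star}; those with at least two vertices in $U$ are bounded combinatorially by $O(k^3 n + k^{9/2})$; and those with exactly one vertex in $U$ are bounded via an Erd\H{o}s--Gallai argument applied to the links of outside vertices, exploiting the maximality of $\mathcal F$ (no $\Su_3(1,k)$ disjoint from $U$ can be added at any $u\notin U$).

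\emph{The main obstacle.} The delicate step is handling edges with exactly one vertex in $U$ in the case where that vertex actually lies in $U$, since the maximality argument does not directly restrict link sizes of $U$-vertices --- one can always swap the $\Su_3(1,k)$ containing such a vertex for another one of the same size, preserving $|U|$. We plan to circumvent this by choosing $\mathcal F$ to minimise an auxiliary objective function (for instance $\sum_{u\in U} d_H(u)$) so that $U$-vertex degrees are on average bounded, controlling the one-vertex-in-$U$ contribution via a handshake-type identity. A second subtlety arises in the small-$k$ regime $k \le n^{2/5}$: simply choosing $v$ to maximise $|L_v|\ge 4e/n \approx k^2 n$ falls short of the threshold the $3$-uniform lemma demands (the vertex-cover construction shows $\ex(n,\sqrt k\cdot \Su_3(1,k))\gtrsim \sqrt k n^2$). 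Here we must instead select $v$ via a more refined criterion --- for example, via a pigeonhole over heavy pairs $\{v,u\}$ with codegree at least $3k^{3/2}n$ in $G$, or via participation of $v$ in many $\Su_4(2,k)$'s --- combining the maximality argument with an expansion-based selection in the spirit of \Cref{lem:4-uniform t=1}. Together, these ingredients are expected to give the upper bound across the whole range $2\le k\le n^{2/3}$, meeting the lower bound constructions up to constant factors.
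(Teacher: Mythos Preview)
Your lower-bound constructions for $\ex(n,\St_4(\sqrt{k},k,1))$ are correct and more explicit than the paper's (which extracts them indirectly from the host graphs built in \Cref{thm:middle-upper-bound}). Note, though, that the target statement also needs the upper bound on $\un_4(n,e)$ from \Cref{thm:middle-upper-bound} and the translation between $k$ and $e$; you address only the Tur\'an-number side.

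The real gap is in your upper-bound plan. The $3$-uniform lemma you aim for --- that a $3$-graph with enough edges contains $\sqrt{k}$ vertex-disjoint copies of $\Su_3(1,k)$ --- is unavailable at the edge density actually present in a link. The paper points this out explicitly just before \Cref{lem-pairs-of-stars}: the $3$-graph of all triples through a fixed vertex has $\binom{n-1}{2}$ edges yet no two disjoint copies of $\Su_3(1,k)$. Quantitatively, the correct threshold for $\sqrt{k}$ disjoint copies is $\gg \max(kn^2,k^{9/2})$, while the link of a maximum-degree vertex in $G$ carries only $\gg\max(k^2 n,k^{9/2})$ edges; this falls short of $kn^2$ throughout $k\le n^{4/7}$, not just $k\le n^{2/5}$. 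Your proposed remedies do not close this: minimising $\sum_{u\in U}d_H(u)$ cannot help, because in the obstructing examples every possible sunflower centre already has degree $\Theta(n^2)$ and must be retained in $U$; and the ``heavy pairs of codegree $\ge 3k^{3/2}n$'' criterion lies far above the average pair-codegree ($\approx k^2$), so such pairs need not exist (and for $k$ near $n^{2/3}$ cannot exist).

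What you are missing is the paper's bounded-codegree reduction. It runs a dichotomy on \emph{expanding triples} (triples of codegree $\ge 3k^{3/2}$ in $G$). If at most half the edges contain one, delete them; in the remainder every link $3$-graph has all pair-codegrees bounded by $3k^{3/2}$, and under that hypothesis a direct greedy argument (\Cref{lem-pairs-of-stars}) already yields $\sqrt{k}$ disjoint $\Su_3(1,k)$'s from $\gg\max(k^2 n,k^{9/2})$ edges --- the very obstruction blocking your general lemma disappears. If instead at least half the edges contain an expanding triple, one passes to the $3$-graph $D_x$ of expanding triples through some vertex $x$: any $\sqrt{k}$-star of expanding pairs in $D_x$, or any $\sqrt{k}$ disjoint $S_k$'s of such pairs, can be greedily completed to $\St_4(\sqrt{k},k,1)$ using the triple-expansion, and three short claims rule these out, reducing again to \Cref{lem-pairs-of-stars}.
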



Let us now turn to the proof of \Cref{thm:middle-lower-bound}. For the upper bound our task is to show that any $4$-graph $G$ on $n$ vertices with $e\gg \max(k^2n^2,k^{9/2}n)$ edges contains a copy of $\St_4(\sqrt{k},k,1)$. To this end, note that by the pigeonhole principle, the link graph $L_v$ of some vertex $v\in V(G)$ must have $e/n\gg \max(k^2n,k^{9/2})$ triples. If $L_v$ contains $\sqrt{k}$ vertex-disjoint copies of $\Su_3(1,k)$, then we are done. Unfortunately, a $3$-graph on $n$ vertices with $\gg \max(k^2n,k^{9/2})$ edges may not have more than one vertex-disjoint copy $\Su_3(1,k)$, let alone $\sqrt{k}$ copies. For example, the $3$-graph consisting of all triples containing a fixed vertex has $\binom{n-1}{2}$ edges (which is large enough when $k \ll n^{4/9}$), and it clearly does not contain two disjoint copies of $\Su_3(1,k)$. As the next result shows, one can remedy the situation by imposing a boundedness condition on the codegrees.

\begin{lem}\label{lem-pairs-of-stars}
Let $k \ge 2$. Every $n$-vertex $3$-graph with at least $e \gg \max(k^2n,k^{9/2})$ edges, in which every pair of vertices has codegree at most $3k^{3/2}$, contains $\sqrt{k}$ vertex-disjoint copies of $\Su_3(1,k)$.
\end{lem}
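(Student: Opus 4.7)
The plan is to split on the number of high-degree vertices. Set $t=\sqrt{k}$ and pick a threshold $T=Ck^3$ for a suitably large absolute constant $C$, and let $H=\{v\in V(G):\deg_G(v)\ge T\}$. First observe that the codegree hypothesis gives the global maximum-degree bound $\Delta(G)\le 3k^{3/2}(n-1)/2=:M$, because each link graph $L_v$ is a graph of maximum degree at most $3k^{3/2}$.

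If $|H|\ge t$, I would build the $t$ stars greedily: pick $v_1,\dots,v_t\in H$ and, for $i=1,\dots,t$ in turn, construct a copy $F_i$ of $\Su_3(1,k)$ centred at $v_i$ whose $2k$ petal-vertices avoid both the other centres $v_j$ and all vertices used by previous $F_j$. The set to avoid at step $i$ has size at most $(t-1)(2k+2)\le 3k^{3/2}$, and each forbidden vertex kills at most $3k^{3/2}$ edges of $L_{v_i}$ by the codegree bound, so the restricted link graph retains at least $T-9k^3$ edges while keeping maximum degree $\le 3k^{3/2}$. A standard greedy matching of size at least $(T-9k^3)/(6k^{3/2})\ge k$ (for $C$ large enough) then supplies the $k$ petals of $F_i$.

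If $|H|<t$, I would instead pass to $G':=G-H$, which has maximum degree strictly less than $T$ and loses at most $|H|\cdot M\le 3k^2n/2$ edges. Let $\mathcal F$ be a maximal collection of vertex-disjoint copies of $\Su_3(1,k)$ in $G'$. Assuming for contradiction that $|\mathcal F|<t$ and setting $V_{\mathcal F}=\bigcup_{F\in\mathcal F}V(F)$, we get $|V_{\mathcal F}|\le (t-1)(2k+1)\le 2k^{3/2}$, so the number of edges of $G'$ incident to $V_{\mathcal F}$ is at most $|V_{\mathcal F}|\cdot T=O(k^{9/2})$. Hence $G'-V_{\mathcal F}$ retains at least $e-O(k^2n)-O(k^{9/2})\gg k^2n$ edges, so \Cref{lem:3-uniform-star} applied to $G'-V_{\mathcal F}$ supplies a further copy of $\Su_3(1,k)$ disjoint from $\mathcal F$, contradicting maximality.

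The delicate point, and the main obstacle, is the bookkeeping in Case~2: the two independent losses $|H|\cdot M=O(k^2n)$ from deleting the high-degree vertices and $|V_{\mathcal F}|\cdot T=O(k^{9/2})$ from deleting the previously chosen stars are exactly what dictate the dual hypothesis $e\gg\max(k^2n,k^{9/2})$, and both of them exploit the codegree bound crucially---the first via $M\lesssim k^{3/2}n$ and the second through the threshold $T\approx k^3$, which in turn is the smallest choice that lets Case~1's greedy step survive the $O(k^{3/2})\cdot 3k^{3/2}=O(k^3)$ drop in the link graph caused by reserving the remaining centres.
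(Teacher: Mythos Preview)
Your proof is correct and follows essentially the same approach as the paper's own argument: both split on whether there are $\sqrt{k}$ vertices of degree at least $\Theta(k^3)$, greedily build the stars from those high-degree link graphs in the first case (using the codegree bound to control the damage from forbidden vertices), and in the second case delete the high-degree vertices and iterate \Cref{lem:3-uniform-star}, with the two losses $O(k^2n)$ and $O(k^{9/2})$ exactly matching the hypothesis. The only cosmetic differences are that the paper invokes \Cref{lem:star-match} to produce a large matching in each link graph and then discards bad pairs, whereas you delete the forbidden vertices first and then greedily match; and the paper bounds each high-degree vertex's edge count directly as $n\cdot 3k^{3/2}$ rather than via your global $\Delta(G)\le M$.
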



\begin{proof}
Assume first that there exist vertices $v_1,\ldots, v_{\sqrt{k}}$ with degree at least $18k^3$.
Since there are no pairs of vertices with codegree larger than $3k^{3/2}$, we know that the link graph of any $v_i$ does not contain a star of size $3k^{3/2}$, so by \Cref{lem:star-match} it must contain a matching of size $3k^{3/2}$.
Now assume we have found $i-1$ disjoint copies of $\Su_3(1,k)$ centred at $v_1,\ldots,v_{i-1}$ and not using any other $v_j$'s. Let us consider a matching of size $3k^{3/2}$ in the link graph of $v_i$. At most 
$\sqrt{k}+2(i-1)k<2k^{3/2}$ of the pairs in the matching already contain a vertex from either $\{v_i,\ldots,v_{\sqrt{k}}\}$ or belonging to one of our $(i-1)$ already found copies of $\Su_3(1,k)$'s. The remaining pairs make a matching of size at least $k$ in the link graph of $v_i$ while avoiding $\{v_1,\ldots,v_{\sqrt{k}}\}$ as well as any already used vertex. This gives us a new $\Su_3(1,k)$ centred at $v_i$ which is disjoint from the previous ones. After repeating $\sqrt{k}$ many times we get the desired $\sqrt{k}$ vertex-disjoint copies of $\Su_3(1,k)$.

So we may assume there are less than $\sqrt{k}$ vertices with degree at least $18k^3$. Each such vertex belongs to at most $3k^{3/2}n$ edges (the second vertex we can choose in $n$ ways and then third in $3k^{3/2}$ ways since codegrees are at most $3k^{3/2}$). Thus in total there are at most $3k^2n$ edges containing a vertex with degree at least $18k^3$. We delete these edges and are left with at least $e/2$ edges.

Suppose we have found $i-1$ vertex-disjoint copies of $\Su_3(1,k)$ in the remaining $3$-graph, where $1 \le i\le \sqrt{k}$. There are at most $(i-1)(2k+1)\cdot 18k^3<54k^{9/2}$ edges which contains a vertex from one of these copies. Removing these edges, we are left with at least $e/4 \gg k^2n$ edges. Thus \Cref{lem:3-uniform-star} applies giving us a new $\Su_3(1,k)$. This way we obtain $\sqrt{k}$ vertex-disjoint copies of $\Su_3(1,k)$.
\end{proof}

We are now ready to finish our analysis of the middle range by proving \Cref{thm:middle-lower-bound}.

\begin{proof}[Proof of \Cref{thm:middle-lower-bound}]
The lower bound follows from \Cref{thm:middle-upper-bound}.


Turning to the upper bound, set $m \gg \max(k^2n,k^{9/2})$ and let $G$ be an $n$-vertex $4$-graph with at least $nm$ edges. Suppose to the contrary that $G$ has no copies of $\St_4(\sqrt{k},k,1)$. 

Let us first consider the case that there are at most $nm/2$ edges containing a $3k^{3/2}$-expanding triple in $G$. Remove all these edges to obtain a $4$-graph $G'$ with at least $nm/2$ edges in which there is no $3k^{3/2}$-expanding triple. By the handshaking lemma, we can find a vertex $x$ in $G'$ of degree at least $2m$. If we look at the $3$-graph $G_x$ obtained by removing $x$ from all these edges we know it has at least $2m$ edges, additionally we know that no pair of vertices is $3k^{3/2}$-expanding in $G_x$ as such pair together with $x$ would give a $3k^{3/2}$-expanding triple. So all codegrees in $G_x$ are at most $3k^{3/2}$ and \Cref{lem-pairs-of-stars} applies, giving us $\sqrt{k}$ vertex-disjoint copies of $\Su_3(1,k)$ which together with $x$ make a copy of $\St_4(\sqrt{k},k,1)$, a contradiction.

Therefore, there are at least $nm/2$ edges which contain a $3k^{3/2}$-expanding triple. For any $v \in V(G)$ let $D_v$ denote the $3$-graph consisting of triples $X$ such that $v \cup X \in E(G)$ and $X$ is $3k^{3/2}$-expanding. Note that we have $\sum_{v\in V(G)} |D_v| \ge nm/2$ since each edge of $G$, containing a $3k^{3/2}$-expanding triple, contributes at least $1$ to this sum. Hence, there exists a vertex $x$ with $|D_x| \ge m/2$.

\begin{claim}\label{claimA}
There is no star of size $\sqrt{k}$ consisting of $3k^{3/2}$-expanding pairs in the $3$-graph $D_x$.
\end{claim}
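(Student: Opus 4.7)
My plan is to argue by contradiction: if such a star exists, I will construct a copy of $\St_4(\sqrt{k}, k, 1)$ inside $G$ directly, contradicting the standing assumption that $G$ is $\St_4(\sqrt{k}, k, 1)$-free.

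Denote the star's center by $y$ and its leaves by $z_1, \ldots, z_{\sqrt{k}}$, so that each pair $\{y, z_i\}$ has codegree at least $3k^{3/2}$ in the $3$-graph $D_x$. I will build the copy of $\St_4(\sqrt{k}, k, 1)$ with $y$ as its overall center and with the $i$-th inner sunflower $\Su_3(1, k)$ centred at $z_i$. The task then reduces to finding, for each $i \in [\sqrt{k}]$ and each $j \in [k]$, two vertices $w_{i,j}, u_{i,j}$ with $\{y, z_i, w_{i,j}, u_{i,j}\} \in E(G)$, such that the entire collection $y, z_1, \ldots, z_{\sqrt{k}}$ together with all the $w_{i,j}, u_{i,j}$ consists of pairwise distinct vertices.

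The construction is a two-step greedy process. I will maintain a set $S$ of already-used vertices, initialised to $S = \{y, z_1, \ldots, z_{\sqrt{k}}\}$. For each pair $(i, j)$ in turn: first pick $w_{i,j} \notin S$ with $\{y, z_i, w_{i,j}\} \in D_x$ (feasible because $\{y, z_i\}$ has at least $3k^{3/2}$ extensions in $D_x$, while $|S|$ is kept strictly below $3k^{3/2}$ throughout); then pick $u_{i,j} \notin S \cup \{w_{i,j}\}$ with $\{y, z_i, w_{i,j}, u_{i,j}\} \in E(G)$ (feasible because $\{y, z_i, w_{i,j}\} \in D_x$ is, by the very definition of $D_x$, a $3k^{3/2}$-expanding triple of $G$); finally add $w_{i,j}$ and $u_{i,j}$ to $S$ and continue.

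The only real obstacle is the arithmetic bookkeeping, namely that $|S|$ never outgrows the $3k^{3/2}$ expansion budget. Since $|S|$ increases by $2$ per iteration and there are $\sqrt{k} \cdot k$ iterations in total, $|S|$ ends at $1 + \sqrt{k} + 2k^{3/2}$, which is strictly less than $3k^{3/2}$ for $k \geq 2$, so both greedy choices always succeed and yield the desired $\St_4(\sqrt{k}, k, 1)$. A curious feature of the argument is that the vertex $x$ itself never appears in the constructed copy: its sole role is to supply the \emph{two-level} expansion that makes the greedy embedding work, namely that pairs can be expanded inside $D_x$ (via the hypothesised star) and that triples belonging to $D_x$ are automatically $3k^{3/2}$-expanding in $G$ (by definition of $D_x$). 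This is also precisely why the thresholds $\sqrt{k}$ (star size) and $3k^{3/2}$ (expansion) are calibrated together.
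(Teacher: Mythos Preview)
Your proof is correct and follows essentially the same approach as the paper: both argue by contradiction, use the $3k^{3/2}$-expansion of the pairs in $D_x$ to greedily pick the third-layer vertices, then use that every triple in $D_x$ is by definition $3k^{3/2}$-expanding in $G$ to greedily pick the fourth-layer vertices, with the same arithmetic $1+\sqrt{k}+2k^{3/2}<3k^{3/2}$ controlling the total number of used vertices. The only cosmetic difference is that you interleave the two greedy choices $(w_{i,j},u_{i,j})$ per pair, whereas the paper does them in two separate passes; this has no mathematical effect.
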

\begin{proof}
Suppose to the contrary that we can find distinct vertices $v_0,\ldots, v_{\sqrt{k}}$ such that $\{v_0,v_i\}$ makes a $3k^{3/2}$-expanding pair for all $1 \le i \le \sqrt{k}$. We know that $\{v_0,v_i\}$ completes into an edge of $D_x$ in at least $3k^{3/2}$ different ways, so we can greedily find $k$ vertices $v_{i1},\ldots v_{ik}$ such that $\{v_0,v_i,v_{ij}\}\in E(D_x)$ and all $v_i,v_{ij}$ are distinct, for all $1\leq i\leq \sqrt{k}$ and $1\leq j\leq k$. The last part is due to the fact that we choose at most $1+\sqrt{k}+k^{3/2}<2k^{3/2}$ vertices in total and each pair completed an edge in at least $3k^{3/2}$ ways so we always have an unused vertex to choose for our $v_{ij}$'s.

Now since each $\{v_0,v_i,v_{ij}\}\in E(D_x)$ and is in particular expanding, we can extend it into an edge of $G$ in $3k^{3/2}$ ways so again greedily we obtain $\St_4(\sqrt{k},k,1)$ as it contains $1+\sqrt{k}+2k^{3/2}< 3k^{3/2}$ vertices so we always have a new vertex to choose to extend $\{v_0,v_i,v_{ij}\}$. This is a contradiction.
\end{proof}

For $y \neq x$, let $D_{xy}$ denote the set of pairs $X$ such that $y \cup X \in E(D_x)$ and $X$ is $3k^{3/2}$-expanding in $D_x$.

\begin{claim}\label{claimB}
 For every $y \ne x$, one has $|D_{xy}|\le 6k^{3/2}n$. \end{claim}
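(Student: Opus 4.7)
The plan is to derive Claim B as a short consequence of Claim A. Reinterpret $D_{xy}$ as a $2$-graph on $V(G)\setminus\{y\}$ whose edges are those pairs $\{a,b\}$ with $\{y,a,b\}\in E(D_x)$ \emph{and} $\{a,b\}$ being $3k^{3/2}$-expanding in $D_x$. The key observation is that every edge of $D_{xy}$ is, by definition, a $3k^{3/2}$-expanding pair in $D_x$; hence $D_{xy}$ is a subgraph of the graph $E$ on $V(G)$ whose edges are \emph{all} $3k^{3/2}$-expanding pairs in $D_x$.

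Claim A says exactly that $E$ contains no star $S_{\sqrt{k}}$, i.e.\ $E$ has maximum degree at most $\sqrt{k}-1$. The handshaking lemma therefore gives $|E|\le(\sqrt{k}-1)n/2$, and since $D_{xy}\subseteq E$ this yields $|D_{xy}|\le \sqrt{k}\,n/2$, which is well within the claimed bound $6k^{3/2}n$. Equivalently, one can argue by contradiction: if $|D_{xy}|>6k^{3/2}n$, averaging produces a vertex $v_0$ of degree at least $12k^{3/2}\ge\sqrt{k}$ in $D_{xy}$, and any $\sqrt{k}$ of its neighbours furnish a star of $\sqrt{k}$ distinct $3k^{3/2}$-expanding pairs centred at $v_0$ in $D_x$, contradicting Claim A.

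There is essentially no technical obstacle here: Claim B follows immediately from Claim A with substantial slack. The deliberately loose estimate $6k^{3/2}n$ is presumably chosen for convenience in the double-counting argument that will finish the upper bound on $\ex(n,\St_4(\sqrt{k},k,1))$, where a bound polynomial in $k^{3/2}$ (matching the natural ``codegree after one peel'' scale) is what slots into the subsequent averaging over $y$.
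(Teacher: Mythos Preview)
Your proof is correct and, in fact, yields the stronger bound $|D_{xy}|\le \sqrt{k}\,n/2$. The argument is exactly as you say: by definition every edge of $D_{xy}$ is a $3k^{3/2}$-expanding pair in $D_x$, so $D_{xy}$ is a subgraph of the graph $E$ of all such pairs; \Cref{claimA} asserts that $E$ is $S_{\sqrt{k}}$-free, hence $|D_{xy}|\le |E|\le (\sqrt{k}-1)n/2$.

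This is a genuinely different route from the paper's. The paper does \emph{not} invoke \Cref{claimA}; instead it argues directly: assuming $|D_{xy}|\ge 6k^{3/2}n$, it applies \Cref{lem:graph-many-stars} (with $s=k^{3/2}$) to the $2$-graph $D_{xy}$ to extract $\sqrt{k}$ vertex-disjoint copies of $S_k$, and then uses that each edge $e_i$ of these stars satisfies $y\cup e_i\in E(D_x)$ --- hence $y\cup e_i$ is a $3k^{3/2}$-expanding triple in $G$ --- to greedily extend to a copy of $\St_4(\sqrt{k},k,1)$ centred at $y$. Your approach is shorter and more elementary, piggybacking on work already done in \Cref{claimA}; the paper's approach is self-contained and shows concretely how a large $D_{xy}$ would itself produce the forbidden configuration (with $y$, rather than $x$, as the root). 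Either plugs into the final double-counting step $\sum_y |D_{xy}|\le m/4$ without difficulty; your sharper bound simply gives extra slack there.
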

\begin{proof}
Suppose to the contrary that $|D_{xy}|\ge 6k^{3/2}n$ for some $y\ne x$. According to \Cref{lem:graph-many-stars} (with $s=k^{3/2}$), the graph $D_{xy}$ contains $\frac{\sqrt{k^{3/2}n}}k \ge \sqrt{k}$ (using $n \ge k^{3/2}$) vertex-disjoint copies of $S_k$. Label the edges of these stars by $e_1,e_2,\ldots,e_{k^{3/2}}$. As $y\cup e_i$ is a $3k^{3/2}$-expanding triple for every $i$, we can greedily find distinct vertices $v_1,\ldots,v_{k^{3/2}} \in V(G)\setminus \bigcup_{j}e_j$ such that $y\cup e_{1}\cup v_{1},\ldots, y\cup e_{k^{3/2}}\cup v_{k^{3/2}}$ are edges of $G$. This yields a copy of $\St_4(\sqrt{k},k,1)$, a contradiction.
\end{proof}

\begin{claim}\label{claimC}
There are at most $\sqrt{k}$ vertices $y$ having $|D_{xy}| \ge 6k^2.$ 
\end{claim}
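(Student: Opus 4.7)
\textbf{Plan for Claim~\ref{claimC}.} The plan is to argue by contradiction: assuming at least $\sqrt{k}$ vertices $y_1,\ldots,y_{\sqrt{k}}$ satisfy $|D_{xy_i}|\ge 6k^2$, I would construct a copy of $\St_4(\sqrt{k},k,1)$ in $G$ with center $x$. To do this it suffices to build $\sqrt{k}$ vertex-disjoint copies of $\Su_3(1,k)$ inside $D_x$, the $i$-th centered at $y_i$; then, because every edge $X$ of $D_x$ satisfies $\{x\}\cup X\in E(G)$, appending $x$ to every triple yields the required $\St_4(\sqrt{k},k,1)$ and contradicts our standing assumption. This mirrors the general strategy used in Claim~\ref{claimB}, but now we have to spread the construction across $\sqrt{k}$ different ``secondary centers'' $y_i$ because no single $D_{xy_i}$ is large enough to supply the whole structure on its own.

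The key observation to unlock a greedy construction is that every edge of $D_{xy_i}$ is, by definition, a $3k^{3/2}$-expanding pair in $D_x$, so Claim~\ref{claimA} forces $\Delta(D_{xy_i})\le \sqrt{k}-1$. I would process $y_1,\ldots,y_{\sqrt{k}}$ in order, maintaining a set $U$ of already-used vertices initialised as $\{x,y_1,\ldots,y_{\sqrt{k}}\}$. At step $i$ we have $|U|\le 1+\sqrt{k}+(i-1)\cdot 2k\le 3k^{3/2}$, and the bounded maximum degree in $D_{xy_i}$ implies that the edges touching $U$ number at most $|U|(\sqrt{k}-1)\le 3k^2$. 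Hence $D_{xy_i}[V\setminus U]$ still contains at least $6k^2-3k^2=3k^2$ edges and has no $S_{\sqrt{k}}$, so Lemma~\ref{lem:star-match} (with $k'=\sqrt{k}$, $\ell=k$, using $2k'\ell=2k^{3/2}\le 3k^2$) produces a matching $M_i$ of size $k$ avoiding $U$. Since $\{y_i,u,w\}\in E(D_x)$ for every $\{u,w\}\in M_i$ by the definition of $D_{xy_i}$, the pair $(y_i,M_i)$ realises a copy of $\Su_3(1,k)$ in $D_x$ using only fresh vertices; I then add the $2k$ leaves of $M_i$ to $U$ and continue.

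After $\sqrt{k}$ such iterations I obtain the desired $\sqrt{k}$ vertex-disjoint copies of $\Su_3(1,k)$ in $D_x$, and adjoining $x$ gives the forbidden $\St_4(\sqrt{k},k,1)$ in $G$. The only subtle point, and therefore the main obstacle, is ensuring that the greedy process does not run out of edges in $D_{xy_i}$ after deleting the previously used vertices: naively, $|U|\approx 3k^{3/2}$ could wipe out far more than the $6k^2$ edges of $D_{xy_i}$. The degree bound coming from Claim~\ref{claimA} is precisely what controls this loss at $|U|\cdot\sqrt{k}=O(k^2)$, leaving enough structure to extract a size-$k$ matching via Lemma~\ref{lem:star-match}. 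All remaining verifications (the crude size bound on $U$, the vertex-disjointness of the final copies, and the identification of the union with $\St_4(\sqrt{k},k,1)$) are routine, so the entire argument hinges on coupling the codegree bound from Claim~\ref{claimA} with the matching lemma in this balanced way.
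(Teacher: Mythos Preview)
Your proposal is correct and follows essentially the same approach as the paper: assume $\sqrt{k}$ offending vertices, use Claim~\ref{claimA} to cap the maximum degree of each $D_{xy_i}$ at $\sqrt{k}-1$, and greedily build vertex-disjoint copies of $\Su_3(1,k)$ inside $D_x$ via Lemma~\ref{lem:star-match}, then append $x$ to obtain the forbidden $\St_4(\sqrt{k},k,1)$. The only cosmetic difference is the order of operations: the paper first extracts a matching of size $3k^{3/2}$ in $D_{xv_i}$ and then discards the at most $\sqrt{k}+2k^{3/2}$ matching edges meeting used vertices, whereas you first delete the used vertices (losing at most $|U|(\sqrt{k}-1)\le 3k^2$ edges via the degree bound) and then apply Lemma~\ref{lem:star-match} to the remainder; both routes are equivalent and rest on the same two ingredients.
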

\begin{proof}
Suppose otherwise and let $v_1,\ldots, v_{\sqrt{k}}$ denote vertices with $|D_{xv_i}|\ge 6k^2$.
Since by \Cref{claimA} there are no stars of size $\sqrt{k}$ in $D_{xv_i}$ we know by \Cref{lem:star-match} that there must be a matching of size $3k^{3/2}$ in $D_{xv_i}$. 
Now assume for some $i\le \sqrt{k}$ we have found $i-1$ vertex-disjoint copies of $\Su_3(1,k)$ centred at $v_1,\ldots,v_{i-1}$ which do not use vertices from $\{v_i,\ldots,v_{\sqrt{k}}\}$. We know there is a matching of size $3k^{3/2}$ in $D_{xv_i},$ at most $\sqrt{k}+2k^{3/2}$ of the pairs in the matching use a vertex which already belongs to one of our $\Su_3(1,k)$'s so we can still find a $k$-matching in $D_{xv_i},$ avoiding any already used vertices. This gives us a new $\Su_3(1,k)$ centred at $v_i$ disjoint from the previous ones and we may repeat this $\sqrt{k}$ many times. Finally, these copies of $\Su_3(1,k)$ when joined with $x$ make a copy of $\St_4(\sqrt{k},k,1)$ which is a contradiction. 
\end{proof}


Finally, it follows from \Cref{claimB,claimC} that the number of edges in $D_x$ containing a $3k^{3/2}$-expanding pair is at most $\sum_{y} |D_{xy}|\le \sqrt{k}\cdot 6k^{3/2}n+n\cdot 6k^2\le m/4$.
Deleting all such edges we are left with $m/4$ edges of $D_x$ such that no pair of vertices is $3k^{3/2}$-expanding. \Cref{lem-pairs-of-stars} implies we can find $\sqrt{k}$ disjoint copies of $\Su_3(1,k)$ which together with $x$ gives us a $\St_4(\sqrt{k},k,1),$ a contradiction. This completes the proof.
\end{proof}




\section{Unavoidability, the dense regime} \label{sec:Turan-bookstars}
In this section we deal with the last regime, when $e \gg n^3$. Note once again that by monotonicity of $\un_4(n,e)$ and combining it with \Cref{thm:main-reformulation-ii} this will imply the result for $e \approx n^3$ as well.


For the majority of the regime the optimal unavoidable $4$-graphs turn out to be based on generalised stars.
Specifically, for $e=kn^3$ they will be disjoint unions of $\St_4((n/k)^{1/3},(n/k)^{1/3},k)$ of suitable size. Unfortunately, in this regime upper bounds do not force as much structure as they do in the previous section, so we begin with the lower bounds. With this in mind our first goal is to determine $\ex(n,\St_4(d_1,d_2,d_3))$ for  $d_1=d_2=(n/k)^{1/3}$ and $d_3=k$ (although our methods should allow one to answer this question in general as well). 

In order to find a copy of $\St_4(d_1,d_2,d_3)$ in a graph we will either find $d_1$ copies of $\St_3(d_2,d_3)$ inside the link graph of a vertex or find $\St_3(d_1,d_2)$ consisting of so called expanding triples, namely triples of vertices which belong to many edges of our graph. In the former case we are done immediately, in the latter we can use the expansion of the triples to greedily extend each edge of our $\St_3(d_1,d_2)$ using $d_3$ new vertices. To find disjoint copies of $\St_3(d_2,d_3)$ or $\St_4(d_1,d_2,d_3)$ we can simply remove any already used vertex from the graph and argue that the remainder still contains enough edges to find a new copy. Unfortunately, as we have already seen in the previous section this approach fails to provide enough stars in most cases. A way around this is to embed the leaves of all our stars (since there are many of them) among vertices with low degrees, the 3rd layer vertices among vertices with only slightly higher degree and so on.
This approach requires very good understanding of Tur\'an numbers of $3$-uniform generalised stars and their unions which we give in the following section. 

In the subsequent section we show the desired bounds for our $4$-uniform case. Interestingly, towards the end of the range, as $e$ approaches $\binom{n}{4}$, generalised stars stop being the optimal examples and are replaced with (disjoint unions) of complete $4$-partite graphs, which we will find through a combination of the K\"{o}v\'ari-S\'os-Tur\'an theorem and a similar embedding trick where we embed largest parts of our already found $r$-partite graphs into vertices with low degree, in order to be able to find many disjoint copies.

\subsection{Tur\'an numbers of 3-uniform generalised stars}\label{subsec-3-uniform-gen-stars}
In this section we will give a number of upper bounds on Tur\'an numbers of generalised stars, and their disjoint unions. The first two results determine these Tur\'an numbers up to constant factors and we prove them in full generality since we find them interesting in their own right. The subsequent three lemmas allow us to do even better (find our stars in graphs with even less edges) if we know certain additional properties of our graph or give us more control where in the graph we can find our stars. We do not state these in full generality, but rather for the cases which arise naturally in the proof of our $4$-uniform result.

The following lemma, which generalises \Cref{lem:3-uniform-star}, determines the Tur\'an number of any $3$-uniform generalised star, up to a constant factor.

\begin{thm}\label{prop:3-uniform-single-star-lastr}
    For every positive integers $n,h$ and $k$, we have 
    \[
    \ex(n,\St_3(h,k))\lesssim \max\{kn^2,h^2k^2n\}.
    \]
\end{thm}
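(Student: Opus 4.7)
The goal is to show that any $3$-graph $G$ on $n$ vertices with $|E(G)| \gtrsim \max\{kn^2, h^2k^2 n\}$ contains a copy of $\St_3(h,k)$. The overall strategy is to locate a vertex $v$ whose link $2$-graph $L_v$ contains $h$ vertex-disjoint copies of $S_k$, because $v$ together with these $h$ stars is exactly $\St_3(h,k)$.

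The easier half is the direct approach: by the handshaking lemma some $v$ has $|L_v| \ge 3|E(G)|/n$, and \Cref{lem:graph-many-stars} applied to $L_v$ with $s = |L_v|/(6n)$ produces $\min(s,\sqrt{sn}/k)\ge h$ vertex-disjoint copies of $S_k$ provided $|L_v|\ge 6\max(hn,\, h^2k^2,\, kn)$. After averaging this becomes $|E(G)| \gtrsim \max(hn^2, h^2k^2 n, kn^2)$, which matches the target bound precisely when $h\le k$, since then $hn^2\le kn^2$ is absorbed.

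For $h>k$ the direct bound $\max(hn^2, h^2k^2 n)$ strictly exceeds the target whenever $n>hk^2$, so I would supplement with an expansion argument. Introduce the auxiliary $2$-graph $D_v\subseteq L_v$ whose edges are the pairs $Y$ that are $\tau$-expanding in $G$ (have $G$-codegree at least $\tau$), with threshold $\tau$ of order $hk$. Assuming $G$ has no $\St_3(h,k)$, every $D_v$ must avoid both (i) $h$ vertex-disjoint copies of $S_k$ (which together with $v$ already form $\St_3(h,k)$ at $v$), and (ii) a star $S_h$ centred at some $u_0$ (whose $h$ $\tau$-expanding edges can each be greedily extended by $k$ fresh vertices once $\tau \ge hk+h+1$, yielding $\St_3(h,k)$ at $u_0$). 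Applying \Cref{lem:star-match} to $D_v$ under the "no $S_h$" assumption bounds $|D_v|$, and summing over $v$ yields an upper bound on the number of edges of $G$ containing any $\tau$-expanding pair. Deleting these leaves a sub-$3$-graph $G'$ all of whose pair-codegrees are below $\tau$, so every link $L_v^{G'}$ has maximum degree below $\tau$. On such a bounded-degree graph a greedy $S_k$-packing is far more efficient than \Cref{lem:graph-many-stars}: each star consumes at most $(k+1)\tau$ edges, so roughly $|L_v^{G'}|/((k+1)\tau)$ disjoint copies of $S_k$ can be extracted, and choosing $v$ of large degree in $G'$ then yields at least $h$ of them.

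\textbf{The main obstacle} is calibrating $\tau$ so that the two stages overlap: $\tau$ must be of order at least $hk$ for the expansion step (ii), but small enough that the $|D_v|$ bound removes only a controllable fraction of $E(G)$ and that the subsequent greedy packing in the bounded-degree regime finds enough stars. Pushing this bookkeeping through cleanly in the hard sub-regime $h>k$, $n>hk^2$ is the technical heart of the argument.
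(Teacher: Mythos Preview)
Your overall plan is close to the paper's, and the second stage (delete expanding pairs, then greedily pack $S_k$'s inside a bounded-codegree link) is exactly what the paper does. The case split into $h\le k$ (direct via \Cref{lem:graph-many-stars}) and $h>k$ is valid but unnecessary: the paper runs the expansion argument uniformly for all $h,k$.

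There is, however, a genuine gap in your first stage. You write that ``applying \Cref{lem:star-match} to $D_v$ under the `no $S_h$' assumption bounds $|D_v|$''. It does not: \Cref{lem:star-match} only tells you that a graph with no $S_h$ and at least $2h\ell$ edges contains an $\ell$-matching. A matching, no matter how large, does not yield $h$ vertex-disjoint copies of $S_k$ when $k\ge 2$, so you cannot feed it back into (i), and you have no edge bound. In fact a graph on $n$ vertices with maximum degree $h-1$ can have up to $\sim hn/2$ edges, and $\sum_v hn/2 \approx hn^2$ is too large to discard in the regime $h>k$, $n>hk^2$.

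The fix is to combine (i) and (ii) via a direct greedy argument rather than \Cref{lem:star-match}: from (ii) every vertex of $D_v$ has degree less than $h$; take a maximal collection of vertex-disjoint $S_k$'s in $D_v$, which by (i) has fewer than $h$ members, spanning fewer than $2hk$ vertices and hence touching fewer than $2h^2k$ edges; the remainder has no $S_k$, so at most $kn/2$ edges. This gives $|D_v|\lesssim h^2k+kn$, whence $\sum_v |D_v|\lesssim h^2kn+kn^2\le 2\max\{kn^2,h^2k^2n\}\ll e$, and you may proceed to the bounded-codegree stage as you planned. This greedy step is precisely what the paper carries out --- except that the paper, rather than bounding $|D_v|$ and deleting, simply observes that when $|D_v|\ge e/(2n)$ the same greedy procedure already locates $h$ disjoint $S_k$'s inside $D_v$ and finishes at once. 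The two organisations are logically equivalent.
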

 
\begin{proof}

Let $G$ be a $3$-graph with $e \gg \max(kn^2,h^2k^2n)$ edges. We want to find $h$ vertex-disjoint copies of $S_k$ in the link graph of some vertex. Let $D_v$ be the subgraph of the link graph of $v$ consisting of pairs $Y$ for which $v \cup Y$ is an edge of $G$ and $Y$ is $3hk$-expanding.

If at least $e/2$ edges in $G$ contain a pair which is $3hk$-expanding then there exists a $D_v$ of size at least $e/(2n)$, as $\sum_{v\in V}|D_v|\geq e/2$. If $D_v$ contains an $S_h$, then since it consists of $3hk$-expanding edges, we would be done by greedily extending it into a copy of $\St_3(h,k),$ since $3hk \ge hk+h+1=|\St_3(h,k)|$. Hence, we suppose there is no $S_h$ in $D_v$, i.e.\ every vertex has degree at most $h$ in $D_v$. Take a maximal collection of disjoint $k$-stars in $D_v$. If this collection consists of at least $h$ stars, they together with $v$ make $\St_3(h,k)$ and we are done, so let us assume towards a contradiction that there are less than $h$ of them. So the union of the stars in this collection has at most $hk+h\le 2hk$ vertices, which participate in at most $2hk\cdot h\le e/(4n)$ edges of $D_v$, since we have shown that degree of any vertex is at most $h$. This means that among the remaining at least $e/(4n)\geq kn$ edges in $D_v$ we can find a new $S_k$, contradicting maximality of our collection.

So we may assume that $G$ contains at most $e/2$ edges containing a $3hk$-expanding pair; removing all such edges we obtain a graph $G'$ in which there is no $3hk$-expanding pair of vertices. We can find a vertex with degree at least $e/n$ and once again keep finding $k$-stars in its link graph for as long as we have less than $h$ of them. At any point we have at most $hk$ vertices and they have degree at most $3hk$ in the link graph of the found vertex (since we removed all edges containing a $3hk$-expanding pair), so they always touch at most $3h^2k^2\leq e/(4n)$ edges, hence we have at least $e/(4n)\geq kn$ other edges, and we can find a new $S_k$, as desired.
\end{proof}

\textbf{Remark.} One can show the bound in the proposition is tight, up to a constant factor, provided $2\le k \le n/h$. Indeed, \Cref{lem:S_3(2k)} gives the first term, and taking $n/(hk)$ disjoint copies of the complete $3$-graph on $hk$ vertices gives the second. Note also that, since $|\St_3(h,k)|>hk$, when $hk>n$ we can never find a copy of $\St_3(h,k)$ in a graph on $n$ vertices, hence the largest $\St_3(h,k)$-free graph is complete; this completes the picture on Tur\'an numbers of generalised $3$-uniform stars (case $k=1$ being \Cref{lem:3-uniform-star}).

The next result shows that if $|E(G)|\gg \max\{kn^2,h^2k^2n\}$ then not only $G$ contains a copy of $\St_3(h, k)$, as guaranteed by the above proposition, but it contains many disjoint copies of them.

\begin{thm}\label{thm:3-uniform-many-star-lastr}
    Every $3$-graph on $n$ vertices with $e\gg sn^2$ 
    edges, with $s \ge \max\{k,h^2k^2/n\}$, contains at least $t=\min\{s,\sqrt{sn}/h,s^{1/3}n^{2/3}/(hk)\}$ 
    vertex-disjoint copies of $\St_3(h, k)$.
\end{thm}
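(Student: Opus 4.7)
The plan is to extend the two-stage case analysis of \Cref{prop:3-uniform-single-star-lastr} (which produces a single copy of $\St_3(h,k)$) into an iterated extraction, in the spirit of \Cref{lem:graph-many-stars} and its $3$-uniform analogue \Cref{lem-pairs-of-stars}. The skeleton is a maximality argument: suppose a maximal vertex-disjoint packing $\mathcal{P}$ of $\St_3(h,k)$-copies in $G$ has size strictly less than $t$; then at most $3hkt$ vertices are used by $\mathcal{P}$, and the induced sub-$3$-graph on the remaining vertices is $\St_3(h,k)$-free, so by \Cref{prop:3-uniform-single-star-lastr} it contains $O(\max(kn^2, h^2 k^2 n)) \lesssim sn^2$ edges. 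Since $|E(G)| \gg sn^2$, the $\le 3hkt$ removed vertices together must be incident to $\gtrsim sn^2$ edges of $G$, and the task is to exploit this to build a fresh copy of $\St_3(h,k)$ disjoint from $\mathcal{P}$, contradicting maximality.

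The extraction itself runs in two cleaning stages, mirroring the proof of \Cref{prop:3-uniform-single-star-lastr}, using a pair-codegree threshold $M$ and a vertex-degree threshold $D$ to be fixed. In the first stage one splits on whether at least half of the edges contain an $M$-expanding pair: if so, an averaging argument yields a vertex whose link $2$-graph is dense in expanding pairs, and iterating Case~(a) of \Cref{prop:3-uniform-single-star-lastr}, while using the expansion slack to place petal vertices outside $\mathcal{P}$, yields the required disjoint copies; otherwise those few edges are removed, leaving codegrees bounded by $M$. In the second stage one splits on whether at least $t$ vertices have degree $\ge D$: if so, each such vertex serves as the center of a new copy—its link $2$-graph has maximum degree bounded by the codegree cap $M$, so by \Cref{lem:star-match} it contains a large matching and by \Cref{lem:graph-many-stars} many disjoint copies of $S_k$, from which $h$ are selected avoiding the $\le 3hkt$ already used vertices; if not, the fewer than $t$ high-degree vertices are removed at a cost of at most $tn^2 \le |E(G)|/4$ edges (valid when $t \lesssim s$), and the maximality argument is run inside the residual graph of maximum degree $<D$ using \Cref{prop:3-uniform-single-star-lastr} to supply the missing copy.

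The three terms in the bound $t=\min(s,\sqrt{sn}/h,s^{1/3}n^{2/3}/(hk))$ arise precisely from balancing these thresholds: $t \le s$ is what makes the deletion of the at most $t$ high-degree vertices cost at most a constant fraction of the edges; and $t \le \sqrt{sn}/h$ and $t \le s^{1/3}n^{2/3}/(hk)$ come respectively from the two clauses of \Cref{lem:graph-many-stars} applied inside the link $2$-graph of a high-degree center, after substituting the value of $D$ forced by the edge-loss budget $3hkt \cdot D \ll |E(G)|$ required for the maximality contradiction. The main obstacle is exactly this simultaneous choice of $M$ and $D$: one needs \Cref{lem:graph-many-stars} applied in a link graph to produce not merely $h$ disjoint copies of $S_k$ but $h + O(hkt)$ of them, so that $h$ remain after discarding those meeting previously packed vertices, while keeping $D$ small enough that the maximality step still carries over. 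The resulting bookkeeping is elementary but intricate, and closely mirrors the $(h,k)=(k,1)$ special case worked out in \Cref{lem-pairs-of-stars}.
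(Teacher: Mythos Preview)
The paper takes a route that sidesteps the obstacle you flag at the end. Rather than pack arbitrary copies of $\St_3(h,k)$ and then struggle to avoid them, it packs only \emph{well-behaved} copies: ones whose $h$ second-layer vertices have degree at most $L_2=e/(3ht)$ and whose $hk$ leaves have degree at most $L_3=e/(3hkt)$. The payoff is that removing all edges incident to a sub-maximal well-behaved packing costs at most $t\binom{n}{2}+ht\cdot L_2+hkt\cdot L_3\le \tfrac34 e$, so one may work entirely in the residual edge set and forget about $\mathcal{P}$ thereafter. One then partitions $V(G)$ into three degree levels $A,B,C$ by the thresholds $L_2,L_3$, observes $|A\cup B|\lesssim hkt$, and case-splits on how many vertices of a typical remaining edge lie in $C$; each of the four cases produces a fresh well-behaved copy directly. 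The three terms defining $t$ are used across these cases (for instance $(hkt)^3\lesssim sn^2$ rules out Case~(i), while $t\le\sqrt{sn}/h$ controls $|A|^2$ in Case~(iv)). There is no global codegree cleaning step; a codegree threshold appears only locally inside Case~(iv).

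Your scheme hits a concrete snag that the well-behaved trick avoids. When you locate the centre $v$---by averaging over the $D_v$'s in Stage~1, or among the $\ge t$ vertices of degree $\ge D$ in Stage~2---nothing prevents $v$ itself from lying in $\mathcal{P}$; and since $\mathcal{P}$ may occupy up to $3hkt\gg t$ vertices, having merely $t$ high-degree candidates does not guarantee an unused one. The natural fix is to delete all edges meeting $\mathcal{P}$ \emph{before} averaging, but without degree control on the non-centre vertices of $\mathcal{P}$ that deletion can consume more than a constant fraction of $e$. Imposing such degree control on the packing from the outset is exactly the well-behaved idea; without it, the two thresholds $M,D$ do not appear to suffice to close the argument, and the model you cite (\Cref{lem-pairs-of-stars}) only works because its bounded-codegree hypothesis supplies the missing control for free.
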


\begin{proof} 

Let $G$ be a $3$-graph on $n$ vertices with $e$ edges. Set $L_2=e/(3ht)$ and $L_3=e/(3hkt)$. We call a star $\St_3(h,k)$ in $G$ {\em well-behaved} if its $h$ second layer vertices have degrees at most $L_2$ and its $hk$ third layer vertices have degrees at most $L_3$. Suppose we have found a collection of less than $t$ vertex-disjoint well-behaved generalised stars $\St_3(h,k)$. The used vertices touch at most
$t\cdot\binom{n}{2}+ht\cdot L_2+hkt\cdot L_3\le \frac{3}{4}e$ edges. We call this set of edges $R$.

 Our goal now is to show that we can find a new well-behaved star $\St_3(h,k)$ in $G$ among the vertices which are not contained in any of the previous stars. Let $A$ denote the set of vertices with degree at least $L_2$, $B$ the set of vertices with degrees between $L_3$ and $L_2$ and $C$ the set of vertices with degree at most $L_3$. Note that $|A\cup B| \le 9ht+9hkt \le 18hkt$. Furthermore, there exists a subset $F\subseteq E(G)\setminus R$ such that every edge in $F$ contains the exact same number of vertices in $A$, as well as in $B$, and in $C$, and we have $|F|\ge \frac{1}{10}\cdot\frac{1}{4} e \gg sn^2$, since there are\footnote{The number of non-negative integer solutions to $x_1+x_2+x_3=3$.} $\binom{3+2}{2}=10$ different types of edges according to how many vertices they have in each of the sets $A,B$ and $C$. We distinguish four cases.
 
\begin{enumerate}[label={\em Case (\roman*).}]

    \item The edges in $F$ have no vertices in $C$.
    
We have $|F|\le \binom{|A\cup B|}{3}\lesssim (hkt)^3 \le sn^2 \ll |F|$, a contradiction.

\item All three vertices of each edge in $F$ are in $C$.

We can find a copy of $\St_3(h,k)$ using only the edges in $F$ by \Cref{prop:3-uniform-single-star-lastr}, since $s \ge \max\{k,h^2k^2/n\}$ and $|F| \gg sn^2$. As all its vertices are in $C$ the star is well-behaved.

\item All edges of $F$ have exactly two vertices in $C$.

There must exist a vertex in $A \cup B$ of degree at least $\frac{|F|}{|A\cup B|}\ge \frac{|F|}{18hkt}\gg \frac{sn^2}{hkt}$. Hence, we can use \Cref{lem:graph-many-stars} to find $h$ vertex-disjoint copies of $S_k$ in its link graph. To see why the lemma gives this, note that for $s'=sn/(hkt)$ we have $\min\{s', \sqrt{s'n}/k\} \ge h$ and $s' \ge k$. Indeed, we have $$\frac{s'}{h}=\frac{\sqrt{sn}}{hk}\cdot \frac{\sqrt{sn}}{ht} \ge 1,  \:\:\:\: \:\: \:\: \:\: \:\: \frac{\sqrt{s'n}}{hk}=t\cdot\sqrt{\frac{sn^2}{(hkt)^3}} \ge t \ge 1\:  \:\:\:\:\:\:\text{ and } \:\: \:\:\: \:\:\frac{s'}{k}=\frac{s}{k} \cdot \frac{s^{1/3}n^{2/3}}{hkt}\cdot \frac{n^{1/3}}{s^{1/3}} \ge 1,$$
where we used $s \ge h^2k^2/n$ and $t \le \sqrt{sn}/h$ in the first inequality, $t \le s^{1/3}n^{2/3}/hk$ in the second and last inequalities, where we also used $n \ge s\ge k$. 
This yields a copy of $\St_3(h,k)$ with all second and third layer vertices in $C$, so the copy is well-behaved.

\item Each edge in $F$ has exactly one vertex in $C$.

Let $\mathcal{P}$ be the set of $sn^2/(hkt)^2$-expanding pairs of vertices in $A\cup B$.
The total number of edges in $F$ which do not contain a pair in $\mathcal{P}$ is less than $\binom{|A\cup B|}{2}\cdot sn^2/(hkt)^2 \le 9 \cdot 18 sn^2 \le |F|/2$. The remaining, at least $|F|/2$ edges each contain a pair from $\mathcal{P}$ and each such pair can belong to at most $n$ edges, so $|\mathcal{P}| \ge |F|/(2n) \gg sn$.
Since at most $\binom{|A|}{2} \le \binom{9ht}{2}< 41sn$ pairs have both vertices in $A$, there are at least $18sn$ pairs in $\mathcal{P}$ with at least one vertex in $B$. This means that there is a vertex in $A\cup B$ having at least $ 18sn/|A\cup B| \ge sn/(hkt) \ge h$ neighbours (with respect to $\mathcal{P}$) in $B$, so we find a copy of $S_h$ in $\mathcal{P}$ with leaves in $B$. Since the pairs in $\mathcal{P}$ have degrees at least $sn^2/(hkt)^2 \ge hkt \ge hk$ (using $t \le s^{1/3}n^{2/3}/hk$), we can greedily extend this $S_h$ into a copy of $\St_3(h,k)$, which is well-behaved since every edge in $S_h$ had at most one vertex in $A$ and exactly two in $A \cup B$ so the third vertex of any edge containing it must be in $C$, by the case assumption.
\end{enumerate}

\vspace{-1.00cm}
\end{proof}

\textbf{Remark.} This result is again best possible in a number of ways. We need the bound on $s$ in order to be able to find even a single star, since \Cref{prop:3-uniform-single-star-lastr} is tight, as explained by the remark bellow it. The actual number of stars we find is also optimal: the bound $t \le s$ follows by taking $K_{s,n,n}$, $t \le \sqrt{sn}/h$ by taking $K_{\sqrt{sn},\sqrt{sn},n}$ (note that $\sqrt{sn} \le n$) and $t \le s^{1/3}n^{2/3}/(hk)$ by taking the complete graph on $s^{1/3}n^{2/3}\le n$ vertices.


The following lemmas arise as parts of our argument bounding $\ex(n,\St_4(d,d,k)$, where $d=(n/k)^{1/3}$, in the following subsection. They are either strengthenings of the above bounds or allow us more control over where we find vertices of our $3$-uniform stars.

\begin{lem}\label{last regime-bounded degree}
Let $H$ be a $3$-graph on $n$ vertices with $e \gg kn^2$ edges. If every pair of vertices belongs to less than $3kd^2$ edges of $H$, then $H$ contains $d$ vertex-disjoint copies of $\St_3(d,k)$, where $d=(n/k)^{1/3}$.
\end{lem}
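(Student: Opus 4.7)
The argument splits into two cases according to how many vertices of $H$ have high degree. Set $T := 16\,dkn$ and call a vertex $v$ \emph{heavy} if $d_H(v)\ge T$. We repeatedly use the identity $kd^3 = n$ (so that $k^2 d^3 = kn$ and $k^2 d^4 = dkn$) and the fact that, by \Cref{prop:3-uniform-single-star-lastr}, $\ex(n,\St_3(d,k)) \lesssim \max\{kn^2,\, d^2 k^2 n\} \lesssim kn^2$, where the second inequality uses $k\le n$.

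\emph{Case 1: at least $d$ heavy vertices.} Fix any $d$ of them as centres $v_1,\dots,v_d$ and build the $d$ copies of $\St_3(d,k)$ sequentially. When processing $v_i$, let $U$ be the set of already-used vertices---the other centres together with the leaves of the first $i-1$ copies---so $|U|\le 3d^2 k$. The link graph $L_{v_i}$ has $\ge T$ edges and, by the codegree hypothesis, maximum degree strictly less than $3kd^2$. Removing the vertices of $U$ from $L_{v_i}$ deletes at most $|U|\cdot 3kd^2 \le 9 k^2 d^4 = 9\,dkn$ edges, leaving at least $7\,dkn$. To this restricted graph (on at most $n$ vertices) we apply \Cref{lem:graph-many-stars} with $s = 7dk/6$: since both $s\ge d$ and $\sqrt{sn}/k = d^2\sqrt{7/6}\ge d$, it produces $d$ vertex-disjoint copies of $S_k$, which together with $v_i$ form the desired $\St_3(d,k)$, disjoint from all previously built copies.

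\emph{Case 2: fewer than $d$ heavy vertices.} By the codegree bound, every $v$ satisfies $d_H(v)\le (n-1)\cdot 3kd^2/2\le 2kd^2n$, so the heavy vertices are collectively incident to fewer than $d\cdot 2kd^2n = 2n^2\ll e$ edges. Discarding these yields a subhypergraph $H'$ with $|E(H')|\ge e/2$ and maximum degree less than $T$. We build the $d$ disjoint copies iteratively inside $H'$: after $i-1\le d-1$ copies have been found, the $\le 2d^2k$ used vertices are collectively incident to at most $2d^2 k\cdot T = 32\,d^3 k^2 n = 32\,kn^2$ edges of $H'$, so the restriction of $H'$ to the unused vertices still has $\gtrsim kn^2$ edges. \Cref{prop:3-uniform-single-star-lastr} then supplies the next copy of $\St_3(d,k)$.

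The main obstacle is the bookkeeping in Case 1: one must verify that the codegree cap $3kd^2$, multiplied by the $O(d^2 k)$ already-used vertices, still leaves a constant fraction of the $T$ edges in each link graph intact so that \Cref{lem:graph-many-stars} can extract $d$ vertex-disjoint copies of $S_k$. This balance is enforced by the identity $kd^3 = n$, which synchronises the three scales $T = \Theta(dkn)$, $\Delta(L_{v_i})\le 3kd^2$ and $|U|\le 3d^2 k$ into matching powers of $k$ and $n$.
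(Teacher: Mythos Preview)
Your proof is correct. Both your argument and the paper's rest on the same two ingredients---the codegree cap forces every link graph to have maximum degree below $3kd^2$, and the identity $kd^3=n$ makes all the bookkeeping quantities line up---but the organisation differs. The paper sets a higher, $e$-dependent degree threshold $ed/(8n)$, calls the high-degree set $A$, removes all edges with two vertices in $A$ (at most $\binom{|A|}{2}\cdot 3kd^2\le e/3$ of them), and then splits into two subcases according to whether most of the remaining edges lie entirely in the low-degree set $B$ or have exactly one vertex in $A$; in each subcase it iteratively extracts a copy, invoking the degree bound in $B$ (respectively the codegree bound for the $A$-vertices) to control how many edges the used vertices touch. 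Your dichotomy is more direct: either $d$ heavy vertices exist, in which case you commit them all as centres up front and harvest $d$ disjoint $S_k$'s from each link graph via \Cref{lem:graph-many-stars}, or they do not, in which case deleting the few heavy vertices caps the maximum degree at $T$ globally and a straight iteration with \Cref{prop:3-uniform-single-star-lastr} finishes. This sidesteps the paper's edge-location casework entirely; the price is that you need \Cref{lem:graph-many-stars} in Case~1, whereas the paper gets by with a bare-hands greedy extraction in the link graph.
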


\begin{proof}

We first show that for any vertex $w$ of degree at least $7kn$ in $H$, there is an $\St_3(d,k)$ centred at $w$. Observe first that the link graph $L_w$ does not contain a star of size $3kd^2$, as otherwise the centre of this star and $w$ make a pair of too high codegree. This means that in the link graph of $w$ we can find $d$ vertex-disjoint copies of $S_k$, again by a greedy procedure. Indeed in every step we have used at most $2kd$ vertices, each having degree at most $3kd^2$ in $L_w$, so in total they touch at most $6k^2d^3=6kn$ edges, and we can find the new $S_k$ among the remaining $nk$ edges. This gives us the desired $\St_3(d,k)$ centred at $w$.

Let $A$ be the set of vertices with degree larger than $ed/(8n) \gg nkd$ and $B$ the set of vertices with degree at most this. Since we have $e$ edges in total this implies $|A|\le 24n/d.$ This in turn implies that there can be at most $\binom{|A|}{2} \cdot 3kd^2 \le e/3$ edges with at least $2$ vertices in $A$, so we can remove them to get a subgraph $H'\subseteq H$ in which every vertex has at most one vertex in $A$ and $|E(H')|\ge 2e/3 \gg kn^2$.

Let us first assume there are at least $e/3$ edges with all vertices in $B$. Taking a maximal collection of vertex-disjoint $\St_3(d,k)$ using only such edges, either we are done (if we have found $d$ of them) or have used at most $2kd^2$ vertices, each with degree at most $ed/(8n)$. So in total, currently used vertices touch at most $2kd^2\cdot ed/(8n) = e/4$ edges of $H'$. Upon removing them we are left with at least $e/12 \gg kn^2$ edges with all vertices in $B$ and disjoint from the set of already used vertices. In particular, there is still a vertex of degree at least $7kn$ and we can use the observation from the beginning of the proof to find an additional $\St_3(d,k)$, a contradiction.

So we may assume there are at least $e/3$ edges with one vertex in $A$ and $2$ in $B$. 
Now take a maximal collection of vertex-disjoint copies of $\St_3(d,k)$ with centres in $A$ and remaining vertices in $B$. Either we are done or we have used at most $d$ vertices from $A$ and $2kd^2$ vertices in $B$. The former have degrees at most $n \cdot 3k d^2$ (second vertex we may choose in $n$ many ways, but for the final we are restricted by the codegree assumption) so touch at most $d \cdot n \cdot 3k d^2=3n^2 \le e/24$ edges. The latter have degrees at most $ed/(8n)$, by definition of $B$, so touch at most $2kd^2 \cdot ed/(8n)=e/4$ edges. Hence, upon removing all these edges we are left with at least $e/24 \ge 7kn^2$ of our edges, all of which are disjoint from the set of already used vertices. This means there is a vertex in $A$ with degree at least $7kn$, so once again using our initial observation we find an additional $\St_3(d,k)$, a contradiction.
\end{proof}

The following lemma allows us to find many copies of $3$-uniform stars with an added restriction that its leaves should avoid a relatively small subset of vertices, provided the edges of our graph have at least one vertex in this small subset. The choice of parameters might seem a bit arbitrary, but it arises from our intended application of the lemma in the $4$-uniform case.

\begin{lem}\label{lem:last regime-first claim}
Let $n,k$ and $t$ be positive integers with $t \le \min \{k,\sqrt{d}\},$ where $d=(n/k)^{1/3}$. Let $H$ be an $n$-vertex $3$-graph with $e \gg \frac{n^3}{td^2}$ edges. Let $C \subseteq V(H)$ such that $|C|\le {16nt}/d$ and assume that every edge of $H$ has precisely one vertex in $C$. Then $H$ contains $d$ vertex-disjoint copies of $\St_3(d,k)$ whose leaves lie outside of $C$.
\end{lem}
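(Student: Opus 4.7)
Since every edge of $H$ contains precisely one vertex of $C$, a copy of $\St_3(d,k)$ with leaves avoiding $C$ must be of one of two types. Either its centre lies in $C$, in which case the remaining $d(k+1)$ vertices automatically lie in $V\setminus C$ (Type 1); or its centre lies in $V\setminus C$, in which case the unique $C$-vertex of each edge must be the second-layer vertex, forcing all $d$ second-layer vertices to lie in $C$ and all $dk$ leaves to lie in $V\setminus C$ (Type 2). The plan is to extract $d$ vertex-disjoint stars greedily, preferring Type 1.

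Write $H=\bigsqcup_{c\in C}\{c\}\cdot L_c$, where $L_c$ is the link graph of $c$ on $V\setminus C$; then $\sum_{c\in C}|L_c|=e$. From $|C|\le 16nt/d$, $e\gg n^3/(td^2)$ and $t\le\sqrt d$, averaging yields some $c\in C$ with $|L_c|\gg n^2/(t^2 d)\ge n^2/d^2$; and the trivial bound $e\le|C|\binom{n}{2}$ together with $k\ge t$ and $n=kd^3$ forces $|C|\gg d$. A short numerical check using $d=(n/k)^{1/3}$ shows that \Cref{lem:graph-many-stars} applied to any graph on at most $n$ vertices with $\gg n^2/d^2$ edges (take $s=\Theta(n/d^2)$, verify $s\ge k$ and $\sqrt{sn}/k\ge d$) produces $d$ vertex-disjoint copies of $S_k$, so each rich link graph $L_c$ yields one Type 1 $\St_3(d,k)$.

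To iterate, fix thresholds $L:=e/(200\,d^2k)$ and $L_1:=e/(10d)$, and call $v\in V\setminus C$ \emph{light} if $\deg_H(v)\le L$ and $c\in C$ \emph{moderate} if $|L_c|\le L_1$; the number of non-light $V\setminus C$-vertices is at most $400\,d^2k$, and of non-moderate $C$-vertices at most $10d$. A short convexity argument shows that if the heavy incidences $\sum_{v\in V_{\mathrm{heavy}}}\deg_H(v)$ are at most $e/2$, then at least $e/2$ edges of $H$ have both $V\setminus C$-endpoints light. In this \emph{light regime} I build Type 1 stars using moderate centres and light second-layer/leaves: over all $d$ iterations the total edge-loss from the $d$ used moderate centres is at most $d\cdot L_1=e/10$, and from the $d^2(k+1)$ used light $V\setminus C$-vertices at most $d^2(k+1)\cdot L=e/100$; averaging over unused moderate centres then produces at each step some $c^*\in C$ whose restricted link graph has $\gg n^2/d^2$ edges, from which \Cref{lem:graph-many-stars} extracts the next Type 1 star. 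In the complementary \emph{heavy regime}, a single heavy $v^*\in V\setminus C$ has $\deg_H(v^*)\gg n^2/(td)$; averaging on the $C$-side of its bipartite link graph $L_{v^*}$ (using $|C|\le 16nt/d$) yields at least $d$ second-layer candidates in $C$ with $L_{v^*}$-degree $\ge(d+1)k$, and greedy leaf selection in $V\setminus C$ produces a Type 2 star.

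The main obstacle is the bookkeeping in the heavy regime when $|C|\not\gg d^2$, which would obstruct extracting $d$ Type 2 stars outright (since each uses $d$ distinct second-layer $C$-vertices). In that sub-regime the construction interleaves Type 2 extractions with Type 1 extractions on the residual graph after each Type 2 removal; feasibility hinges on $|C|\gg d$ (itself a consequence of $e\gg n^3/(td^2)$, $t\le k$ and $n=kd^3$) together with the modest per-iteration edge-loss of $O(e/d)$ in both regimes, which keeps the averaging argument above the $\gg n^2/d^2$ density threshold throughout all $d$ steps.
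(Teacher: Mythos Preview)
Your light regime is essentially right, though you should note that the at most $10d$ non-moderate centres touch at most $10d\cdot\binom{n}{2}\lesssim dn^2\ll e$ edges, so plenty of light--light edges do go through moderate centres; without this your averaging over moderate centres is unjustified.

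The real gap is in the heavy regime. Your claimed ``per-iteration edge-loss of $O(e/d)$'' for Type~2 extractions is not established. A Type~2 star uses one heavy centre, $d$ second-layer vertices in $C$, and $dk$ leaves. The leaves can be taken light, fine. But you have no bound on the $H$-degree of the second-layer $C$-vertices beyond the trivial $\binom{n}{2}$, and even restricting them to be moderate gives only $L_1=e/(10d)$ each. Over $d$ iterations the second-layer loss is therefore as large as $d^2\cdot\tfrac{n^2}{2}$ (or $d^2L_1=de/10$), and $d^2n^2/e$ can be of order $td/k$, which is as big as $d$ when $t=k$. So your interleaving can exhaust the edge supply long before you reach $d$ stars. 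The obstacle you identify (that $|C|$ may fail to be $\gg d^2$) is a symptom of this, but the real issue is that you are trying to do the bookkeeping for Type~2 stars in $H$ rather than locally.

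The paper's fix is to use a different mechanism in the heavy case. First observe (and your own dichotomy already gives this, since $e/2<\sum_{v\text{ heavy}}\deg_H(v)\le m\cdot n^2/2$ forces $m>e/n^2\gg kd/t\ge d$) that there are at least $d$ heavy vertices in $V\setminus C$. Fix a set $X$ of $d$ of them \emph{in advance}, delete the few edges with two vertices in $X$, and then for each $x\in X$ build $\St_3(d,k)$ inside the bipartite link graph $L_x$. The point is that in $L_x$ every $C$-vertex has degree at most $n$ and every $D$-vertex has degree at most $|C|$, so the at most $d^2$ used $C$-vertices and $2kd^2$ used $D$-vertices block only $d^2n+2kd^2|C|\ll\delta$ edges of $L_x$, leaving $\ge k|C|$ available and hence an unused $c\in C$ with $k$ unused neighbours. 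This local bookkeeping is what lets the construction of all $d$ Type~2 stars go through; your global accounting cannot.
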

        \begin{proof}
        Let $D=V(H) \setminus C$. Suppose there exists a set $X$ of $d$ vertices in $D$ with degree at least \begin{equation}\label{eq:delta}
            \delta:=\frac{ed}{8n} \gg \frac{n^2}{dt}=\frac{nd^2k}t\ge \max(nd^2,ntdk)
        \end{equation}
        where in the last inequality we used the assumption on $t$. We delete from $H$ any edge containing two vertices in $X$. We deleted at most 
        $$d^2|C|\le 16ntd \ll  \frac{\delta}k$$ 
        such edges, using \eqref{eq:delta}. Hence, vertices in $X$ after deletion still have degree at least $\frac{\delta}2.$ We now show how to find $d$ vertex disjoint copies of $\St_3(d,k)$ with centres in $X$, 2nd layer vertices in $C$ and leaves in $D\setminus X$. We construct them as follows. We pick a vertex from $X$ and look inside its link graph for copies of stars $S_k$ with centres in $C$, leaves in $D \setminus X$ and not using any already used vertices. If we find $d$ of them we proceed to the next vertex of $X$, otherwise we stop. If we did not stop by the time we considered all vertices of $X$, we have found our desired structure.
        So we may assume that we do stop at some point when considering $x \in X$. At this point we have used at most $d^2$ vertices from $C$ and $kd^2+d\le 2kd^2$ vertices from $D$ (the first term being the contribution of $D \setminus X$ and the second of $X$). In the link graph $L_x$ (note that this graph consists of edges with one vertex in $C$ and one in $D$), already used vertices from $C$ touch at most $d^2\cdot n \le \delta/8$ 
        edges in total, using \eqref{eq:delta} and $t \le k$. Furthermore, used vertices from $D$ touch at most $$2kd^2|C| \le 32ntdk \le \frac{\delta}8$$ 
        edges in total, using \eqref{eq:delta}. So removing all edges touching these forbidden, already used, vertices we are left with at least 
        $$\frac{\delta}4 \ge \frac{16ntk}d\ge k|C|$$
        edges in $L_x$, avoiding all already used vertices. Hence, we find another appropriate $S_k$, a contradiction.
        
        So we may assume that there are at most $d$ vertices in $D$ with degree at least $\delta.$ These vertices participate in at most 
        $$dn^2 = \frac{n^3}{kd^2} \le \frac{e}4$$ 
        edges (using $t \le k$); we delete them and are left with a graph $H'$ with at least $\frac{3e}4$ edges such that all vertices in $D$ have degree at most $\delta$ in $H'$. Let us take a maximal collection of vertex-disjoint copies of $\St_3(d,k)$ with centres in $C$. If we are not done, the centres touch at most $dn^2 \le \frac{e}4$ edges, while the other used vertices touch at most $2kd^2 \cdot \delta = \frac{e}4$ edges (since they all belong to $D$ so touch at most $\delta$ edges). So there are at least $e/4$ edges which do not touch any of the used vertices. Let $H''$ be the graph consisting of these edges. Now it is enough to find another $\St_3(d,k)$ in $H''$, with its centre in $C$. Note that there is a vertex in $C$ of degree at least $$\frac{e}{4|C|}\ge \frac{6n^2}{dt^2}=6n\cdot \frac{kd^2}{t^2}$$ in $H''$. Applying \Cref{lem:graph-many-stars} with $s:=\frac{kd^2}{t^2}$, which we can since $s\ge kd \ge k$ (using $t \le \sqrt{d}$), we find $\min(s,\sqrt{sn}/k) \ge \min(kd,d^2) \ge d$ (using $n=kd^3$ and $t \le \sqrt{d}$ in the second term) disjoint stars $S_k$ in the link graph of this vertex, completing the proof.
        \end{proof}
     
     The final lemma for the $3$-uniform case is the following. It is similar in spirit to the above one, except that it works with smaller sets and only finds a single star. While the above lemma will be used to embed a number of stars within the link graph of a fixed vertex, the following one will be used to find the star making the first $3$-layers.
        
    \begin{lem}\label{lem:last regime-second claim}
   Let $n,k$ and $t$ be positive integers with $t \le \min \{k,\sqrt{d}\},$ where $d=(n/k)^{1/3}$. Let $H$ be a $3$-graph on vertex set $B \cup C$ where $B$ and $C$ are disjoint, $|B|\le 16td^2$ and $|C| \le 16td^2k$. If $H$ has $e \gg kn^{2}$ edges then it contains a copy of $\St_3(d,d)$ whose leaves all lie in $C$.
    \end{lem}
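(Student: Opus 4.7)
The natural approach is to case-split the edges of $H$ by the number of vertices they contain in $B$ versus $C$; writing type $(i,j)$ for an edge with $i$ vertices in $B$ and $j$ in $C$, we note that since $|B|\le 16td^2$ and $t\le k$, at most $\binom{|B|}{3}\lesssim kn^2$ edges can be of type $(3,0)$. Choosing the constant hidden in $e\gg kn^2$ sufficiently large, fewer than $e/4$ edges of $H$ are of type $(3,0)$, so by pigeonhole at least $e/4\gg kn^2$ edges share a common type among $(0,3)$, $(1,2)$ and $(2,1)$; we treat each possibility separately, in each case producing an $\St_3(d,d)$ whose leaves lie in $C$.

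For type $(0,3)$, the induced $3$-graph $H[C]$ has $\gtrsim kn^2$ edges on $|C|\le 16td^2k$ vertices; using $t^2\le d$ and $t\le k$ one readily verifies $kn^2\gg \max\{d|C|^2,\,d^4|C|\}$, so \Cref{prop:3-uniform-single-star-lastr} applied with $h=k=d$ immediately gives an $\St_3(d,d)$ inside $C$. For type $(1,2)$, averaging over $B$ yields some $v\in B$ whose link graph $L_v$ — a $2$-graph on $C$ — has at least $e/(4|B|)\gtrsim k^3d^4/t$ edges; the two conditions $|E(L_v)|\ge 6d|C|$ and $|E(L_v)|\ge 6d^4$ needed to apply \Cref{lem:graph-many-stars} (with $k$ there equal to $d$) both follow from the same estimates $t^2\le d$ and $t\le k$, producing $d$ vertex-disjoint copies of $S_d$ in $L_v$, which together with $v$ form the desired $\St_3(d,d)$.

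The main obstacle is the type-$(2,1)$ case: now the link graph $L_v$ of a high-degree $v\in B$ is bipartite between $B\setminus\{v\}$ and $C$, and we specifically need the $d$ second-layer vertices in $B$ and their $d$ leaves in $C$, so we cannot invoke \Cref{lem:graph-many-stars} as a black box. The plan is to build the star greedily: after having selected $i-1<d$ disjoint pieces $(u_j;w_{j,1},\ldots,w_{j,d})$ with $u_j\in B$ and leaves in $C$, the number of edges of $L_v$ incident to any already used vertex is at most $(i-1)|C|+(i-1)d|B|\lesssim td^3k+td^4$, which is a vanishing fraction of $|E(L_v)|\gtrsim k^3d^4/t$ provided the constant hidden in $e\gg kn^2$ is large enough (again via $t^2\le d$ and $t\le k$). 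Hence the remaining bipartite graph still admits a vertex in $B$ of degree at least $d$, from which we extract the next $S_d$; iterating $d$ times and attaching $v$ produces the required $\St_3(d,d)$.
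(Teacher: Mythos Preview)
Your proof is correct. The case split by type, the counting for type $(3,0)$, and the invocation of \Cref{prop:3-uniform-single-star-lastr} for type $(0,3)$ are exactly as in the paper. The difference lies in how the mixed types are handled.

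The paper treats types $(1,2)$ and $(2,1)$ together: it calls a pair with at least one vertex in $B$ a \emph{$B$-pair}, notes there are at most $|B|(|B|+|C|)\lesssim kn^2/d^2$ of them, and calls such a pair \emph{$C$-expanding} if it extends to an edge via at least $3d^2$ vertices of $C$. Discarding edges whose $B$-pairs are not $C$-expanding leaves $\ge e/4$ edges, hence at least $(e/4)/|C|\gtrsim dn$ distinct $C$-expanding $B$-pairs; a star $S_d$ among these pairs is then greedily extended into $C$ to form $\St_3(d,d)$ with leaves in $C$.

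Your route is more hands-on: you separate $(1,2)$ from $(2,1)$, in each case pass to the link graph of a high-degree vertex $v\in B$, and then either quote \Cref{lem:graph-many-stars} (for $(1,2)$, where $L_v\subseteq\binom{C}{2}$) or run a bespoke greedy argument exploiting the bipartite structure of $L_v$ (for $(2,1)$). The arithmetic checks you sketch all go through with the constant hidden in $e\gg kn^2$ chosen large enough. What the paper's approach buys is uniformity---one argument for both mixed types and no separate greedy---at the cost of introducing the expanding-pair notion; your approach avoids that notion entirely and stays closer to repeated pigeonhole, at the cost of one extra case.
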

    \begin{proof}
    There are at most $$\binom{|B|}{3} \le 2^{12}t^3d^6\le 2^{12}k^3 (n/k)^2\le \frac{e}4$$ edges within $B$. If we have $e/4$ edges living completely in $C$ then by \Cref{prop:3-uniform-single-star-lastr}, since 
    $${e}/4 \ge 2^8n^2 \ge\max\{d|C|^2,d^4|C|\},$$
    where we used $t^2 \le d$ for the first term and $t \le k$ for the second, we can find a copy of $\St_3(d,d)$ consisting of these edges, so in particular having all leaves in $C$, as desired. So we may assume there are at least $e/2$ edges containing vertices from both $B$ and $C$. Let $H'$ be the subgraph consisting of such edges.
    
    A pair of vertices in $H$ is said to be a $B$-pair if it contains at least one vertex in $B$. There are at most $$|B| \cdot (|B|+|C|) \le 2^9t^2d^{4}k\le 2^9d^4k^3= 2^9kn^2/d^2 \le e/(12d^2)$$  
    $B$-pairs. We say a $B$-pair is \textit{$C$-expanding} if it extends into an edge of $H'$ using a vertex from $C$ in at least $3d^{2}$ many ways. This means there are at most $e/4$ edges which contain a non-$C$-expanding $B$-pair. The remaining, at least $e/4$, edges only contain $C$-expanding $B$-pairs. Each of these edges can be split into a $B$-pair, which must be $C$-expanding and a vertex in $C$. Hence, there needs to be at least
    $$\frac{e}{4|C|}\ge \frac{kn^2}{td^2k}= \frac{k}{t} \cdot \frac{n}{d^3k} \cdot dn \ge  dn$$ 
    $C$-expanding $B$ pairs. Hence, we can find a star of size $d$ made of such pairs and since each of them extends to at least $ 3d^{2}\ge |\St_3(d,d)|$ edges using a vertex in $C$, we find our desired star by a greedy procedure. 
    \end{proof}

\subsection{Optimal $4$-uniform unavoidable graphs} \label{subsec:-lower-bound}
We finally have all the $3$-uniform results our hearts might desire, we proceed to the $4$-uniform case. 

We begin by determining the Tur\'an number of a single copy of our unavoidable generalised $4$-uniform star. The proof is relatively simple since we did most of the legwork in the previous section.

\begin{lem}\label{lem:4-uniform-single-star-lastr}
For $1\le k \le n$ one has
$\ex(n,\St_4(d,d,k)) \lesssim kn^3$, where $d=(n/k)^{1/3}$.
\end{lem}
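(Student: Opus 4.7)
The plan is to use the two strategies outlined in the preamble to this section: (a) locate a vertex $v$ whose $3$-link $L_v$ contains $d$ vertex-disjoint copies of $\St_3(d,k)$, then attach $v$ as the centre; or (b) locate a copy of $\St_3(d,d)$ consisting of highly expanding triples of $G$ and greedily extend each of its $d^2$ triples by $k$ fresh leaves. Concretely, I would set the threshold $K:=3kd^2$ and partition $E(G)=E(G')\sqcup E(G_B)$, where $G'$ consists of the edges no triple of which is $K$-expanding in $G$ and $G_B$ of the remaining edges; one of these contains at least $e/2\gg kn^3$ edges.

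In the case $|E(G')|\ge e/2$, handshaking inside $G'$ gives a vertex $v$ with $|L_v^{G'}|\ge 4|E(G')|/n\gg kn^2$. By construction, every pair $\{u,w\}$ in $L_v^{G'}$ has codegree there below $K=3kd^2$, since that codegree equals the $G'$-codegree of $\{v,u,w\}$ and this triple sits inside a $G'$-edge. \Cref{last regime-bounded degree} then yields $d$ vertex-disjoint copies of $\St_3(d,k)\subseteq L_v^{G'}$, and adjoining $v$ produces $\St_4(d,d,k)\subseteq G$.

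In the case $|E(G_B)|>e/2$, let $T$ denote the $3$-graph of $K$-expanding triples; from $\sum_{t\in T}\mathrm{codeg}_G(t)\ge |E(G_B)|$ together with $\mathrm{codeg}_G(t)\le n$, we get $|T|\ge e/(2n)\gg kn^2$. Any copy of $\St_3(d,d)\subseteq T$ then extends to the desired $\St_4(d,d,k)$: each of its $d^2$ triples is $K$-expanding in $G$, and since only $O(d^2k)\ll K$ vertices are ever in use, greedily attaching $k$ fresh leaves to every triple is possible.

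The subtle step is producing the $\St_3(d,d)$ inside $T$. When $k\ge d$ the bound $|T|\gg kn^2\gtrsim dn^2$ combined with \Cref{prop:3-uniform-single-star-lastr} does it directly. When $k<d$ this crude bound fails and one has to concentrate $T$ onto a small vertex set $B\cup C$ with $|B|\le 16d^2$ and $|C|\le 16d^2k$ — for instance with $B$ the top-$T$-degree vertices and $C$ the vertices appearing frequently as the third vertex of $T$-triples meeting $B$ — and then invoke \Cref{lem:last regime-second claim}, which requires only $\gg kn^2$ edges of $T$ on $B\cup C$. Certifying that a linear fraction of $T$'s edges lies on some such pair $(B,C)$ is the main place where real work is required.
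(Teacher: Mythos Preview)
Your split into the two cases and the handling of the ``no expanding triples'' side via \Cref{last regime-bounded degree} match the paper exactly. The genuine gap is in the expanding-triples case when $k<d$ (equivalently $k<n^{1/4}$).

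Once you pass to the abstract $3$-graph $T$ of all $K$-expanding triples, the only information you retain is $|T|\gg kn^2$ on $n$ vertices. For $k<d$, \Cref{prop:3-uniform-single-star-lastr} is tight: there exist $3$-graphs with $\Theta(kn^2)$ edges and no $\St_3(d,d)$, so you cannot hope to locate one in $T$ on size grounds alone. Your proposed rescue via \Cref{lem:last regime-second claim} would require a constant fraction of $T$ to sit on a set of size $O(d^2k)$, but nothing in the setup forces such concentration; $T$ lives a priori on all $n$ vertices, and the sketch of how $B$ and $C$ would be chosen carries no argument that they capture enough of $T$. So as written the proof is incomplete, and the suggested completion is not one that obviously goes through.

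The paper avoids this by not discarding the link structure. It works with $D_v=\{X\in T: X\cup\{v\}\in E(G)\}$ for a vertex $v$ with $|D_v|\ge e/(2n)\gg kn^2$, and runs a dichotomy inside $D_v$. Either some vertex $w$ has degree at least $6dn$ in $D_v$, in which case \Cref{lem:graph-many-stars} produces an $\St_3(d,d)$ in $D_v$ centred at $w$, and one extends greedily exactly as you describe. Or every vertex has degree at most $6dn$ in $D_v$, in which case one greedily finds $d$ vertex-disjoint copies of $\St_3(d,k)$ inside $D_v$: each copy comes from \Cref{prop:3-uniform-single-star-lastr}, which for $\St_3(d,k)$ needs only $\gg\max\{kn^2,d^2k^2n\}=kn^2$ edges, and the degree cap guarantees the at most $3kd^2$ already-used vertices touch only $18kd^3n=18n^2\ll kn^2$ edges of $D_v$. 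In this branch the $d$ disjoint $\St_3(d,k)$'s together with $v$ already form $\St_4(d,d,k)$, with no greedy extension needed. The point is that keeping $v$ in play opens a second route to the target that requires only $\St_3(d,k)$'s rather than an $\St_3(d,d)$, and this is precisely what makes the small-$k$ regime work.
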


\begin{proof}
For ease of notation, set $\St=\St_4(d,d,k)$. Let $G$ be an $n$-vertex $4$-graph with $e \gg kn^3$ edges. We say a triple of vertices $X$ is \emph{expanding} in $G$ if it is $3kd^2$-expanding. 

\begin{claim*}
If at least $e/2 \gg kn^3$ edges contain an expanding triple, then $G$ contains $\St$.
\end{claim*}
\begin{proof}
Let $D_v:=\{X \subseteq V(G)^3 \mid X \cup v \in E(G),\: X \text{ is expanding}\}$. We have $\sum_v |D_v| \ge e/2,$ since every edge containing an expanding triple contributes at least one to the sum on the left. This implies that there exists a vertex $v$ with $|D_v| \ge e/(2n).$ Now, if there exists a vertex $w$ with degree at least $6dn$ in $D_v$ then we can find $d$ vertex-disjoint copies of the star of size $d$ within the link graph of $w$ in $D_v$, by \Cref{lem:graph-many-stars} and using that $n=d^3k \ge d^3$. Adding $w$ to these edges gives us an $\St_3(d,d)$ within $D_v$ (so consisting of expanding triples). This in turn implies we are done by greedily extending it to a copy of $\St$. So we may assume every vertex has degree at most $6dn$ in $D_v$. 

\Cref{prop:3-uniform-single-star-lastr} tells us that we can find at least one $\St_3(d,k)$ in any $3$-graph with $e/(4n) \gg kn^2$ edges (using that $n=d^3k \ge d^2k$ so that $d^2k^2n \le kn^2$). 
Suppose we have found $t<d$ vertex-disjoint copies of $\St_3(d,k)$ in $D_v$; they span at most 
$3kd^{2}$ vertices, each having degree at most $6dn$ in $D_v$. Hence, these vertices are incident to at most $18 n^2$ edges, so if we remove all of them, we are left with at least $e/(4n)$ edges. Now, \Cref{prop:3-uniform-single-star-lastr} implies that we can find another disjoint $\St_3(d,k).$ Therefore, we can find at least $d$ vertex-disjoint copies of $\St_3(d,k)$ inside $D_v$, which together with $v$ give us a copy of $\St$.
\end{proof}

By the claim we may assume there are at most $e/2$ edges containing an expanding triple, so after deleting them we are left with a subgraph $G'$ with at least $e/2 \gg kn^3$ edges, with no expanding triples. Take a vertex of maximum degree; it has degree at least $e/n \gg kn^2$ and in its link graph no pair of vertices belongs to more than $3kd^{2}$ edges since there is no expanding triple. So \Cref{last regime-bounded degree} implies the result.
\end{proof}

The following result gives us our optimal unavoidable $4$-graphs.

\begin{thm}\label{thm:last regime-lower bound}
In any $4$-graph $G$ on $n$ vertices with $e\gg kn^3$ edges, one can find $t=\min\{k,d^{1/4}\}$ vertex-disjoint copies of $\St_4(d,d,k),$ where $d=(n/k)^{1/3}$.
\end{thm}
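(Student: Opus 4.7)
Our strategy adapts the iterative framework of \Cref{thm:3-uniform-many-star-lastr} to one deeper layer. Set thresholds $L_2 = e/(C_0 d t)$, $L_3 = e/(C_0 d^2 t)$ and $L_4 = e d/(C_0 n t)$ for a sufficiently large constant $C_0$. Call a copy of $\St_4(d,d,k)$ in $G$ \emph{well-behaved} if its second layer vertex has degree in $G$ at most $L_2$, its $d$ third-layer vertices have degrees at most $L_3$, and its $d^2 k = n/d$ leaves have degrees at most $L_4$; the centre is unconstrained. Using $t \le k$ to dispose of the centre contribution $t \binom{n}{3} \le e/8$, a direct count shows that any $t' < t$ vertex-disjoint well-behaved copies together touch at most $3e/4$ edges of $G$. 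It therefore suffices to show that the subgraph $G'$ obtained by deleting these edges, which still has $\gg kn^3$ edges, contains a new well-behaved copy.

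Partition the vertices of $G'$ by their degree in $G$ into classes $A, B, C, D$ using the thresholds $L_2, L_3, L_4$. Double-counting yields $|A| \lesssim dt$, $|B| \lesssim d^2 t$ and $|C| \lesssim nt/d$, and a well-behaved copy is precisely one whose second layer avoids $A$, whose third layer lies in $C \cup D$, and whose leaves lie in $D$. Bin the edges of $G'$ by their vertex-profile in $\{A,B,C,D\}^4$ and let $\pi$ be a dominant profile carrying at least $|E(G')|/35 \gg kn^3$ edges. The plan is to pick a centre $v_0$ in some small-degree class so that the link $L_{v_0}$, restricted to $\pi$-edges, is a $3$-graph with $\gg kn^2$ edges whose vertices sit in classes compatible with well-behavedness, and then to recover $d$ vertex-disjoint copies of $\St_3(d,k)$ inside $L_{v_0}$ using the $3$-uniform results of \Cref{subsec-3-uniform-gen-stars}.

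The case analysis then mirrors that of \Cref{thm:3-uniform-many-star-lastr}. If $\pi$ has all four coordinates in $D$, \Cref{lem:4-uniform-single-star-lastr} applied to the $\pi$-edges immediately delivers a new well-behaved copy. Otherwise some coordinate of $\pi$ lies in $A \cup B \cup C$, and an averaging argument over that small set provides an appropriate $v_0$. The remaining three coordinates of $\pi$ determine which $3$-uniform sub-lemma governs: \Cref{last regime-bounded degree} handles the generic bounded-codegree subcase in which the layer vertices can all be taken in $D$; \Cref{lem:last regime-first claim} forces leaves of the $\St_3(d,k)$ copies into $D$ when the $\pi$-edges sit over a small cover in $C$; and \Cref{lem:last regime-second claim} supplies the $\St_3(d,d)$-skeleton when both second and third layers must come from small sets, after which the required $k$ leaves are attached greedily by the expansion property of the triples.

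The main obstacle is the profile-by-profile book-keeping: for each non-trivial profile one must verify that the hypotheses of the chosen $3$-uniform lemma are met with room to spare. The relation $n = d^3 k$ is used throughout to reconcile the sizes $|A|, |B|, |C|$ with the $16nt/d$-type thresholds appearing in \Cref{lem:last regime-first claim,lem:last regime-second claim}, and the hypothesis $t \le \min\{k, d^{1/4}\}$ is precisely what makes the condition $t \le \sqrt{d}$ in those lemmas available, as well as what guarantees enough low-degree vertices remain after each round of star-finding to sustain the iteration all the way up to $t$ disjoint copies.
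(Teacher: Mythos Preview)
Your framework is essentially that of the paper: the same degree thresholds $L_2,L_3,L_4$, the same notion of well-behaved copies, the same partition of $V(G)$ into $A,B,C,D$, and the same reduction to finding one more well-behaved copy among the edges of a dominant profile $\pi$. So the architecture is correct.

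However, the case analysis as you describe it has two genuine gaps. First, for the profile with exactly three vertices in $D$ and one in $A\cup B\cup C$, you propose to average over the small set to find a centre $v_0$ and then invoke \Cref{last regime-bounded degree} in its link. But that lemma requires every pair to have codegree at most $3kd^2$, and nothing in the profile forces this; the link of $v_0$ could easily have pairs of enormous codegree. The paper handles this case with \Cref{thm:3-uniform-many-star-lastr}, which needs no codegree hypothesis and directly delivers $d$ disjoint copies of $\St_3(d,k)$ once the link has $\gg n^2 dk/t$ edges. Your sketch never cites this theorem as an input, only as an inspiration.

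Second, the profile with exactly two vertices in $B$ and two in $D$ is not covered by any of the three $3$-uniform lemmas you list: \Cref{last regime-bounded degree} again needs a codegree bound you do not have, \Cref{lem:last regime-first claim} needs the small set to have size $\lesssim nt/d$ (true for $C$ but not relevant when the non-$D$ vertices live in $B$), and \Cref{lem:last regime-second claim} builds an $\St_3(d,d)$ skeleton rather than the $d$ copies of $\St_3(d,k)$ you need here. The paper treats this profile by a bespoke argument on $B$--$B$ pairs of high $D$--$D$ codegree, finding a $d$-star among such pairs and then greedily filling in $d$ disjoint $S_k$'s in each link. You will need to supply this argument separately.

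A minor point: your layer counts are off in the well-behaved definition (the second layer of $\St_4(d,d,k)$ has $d$ vertices and the third layer $d^2$, not $1$ and $d$), though your thresholds $L_2,L_3$ are scaled correctly for the true sizes.
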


\begin{proof}
Set $L_2=\frac{e}{4td},  L_3=\frac{e}{4td^2}$ and $L_4=\frac{e}{4td^2k}$. We partition $V(G)$ into level sets according to their degrees
\begin{align*}
    A&:=\left\{v \mid d(v) > L_2\right\}, \\
    B&:=\left\{v \mid L_3 < d(v) \le L_2\right\},  \\
    C&:=\left\{v \mid L_4 < d(v) \le L_3\right\} \enskip \text{and}  \\
    D&:=\left\{v \mid d(v) \le L_4\right\}.
\end{align*}
Let $\St:=\St_4(d,d,k)$. A generalised star $\St$ in $G$ is said to be \textit{well-behaved} if its second layer vertices are embedded among vertices with degree at most $L_2$ (i.e. vertices in $B\cup C\cup D$), the third layer vertices in vertices of degree at most $L_3$ (i.e. vertices in $C\cup D$), and the fourth layer vertices in vertices of degree at most $L_4$ (i.e. vertices in $D$).
Let us consider a maximal collection of vertex-disjoint well-behaved generalised stars $\St$. If we found less than $t$ stars then already used vertices touch at most 
$t\cdot \binom{n}{3}+td\cdot L_2+td^{2}\cdot L_3+ td^{2}k\cdot L_4 \le \frac78e$
edges. So there are at least $e/8$ edges disjoint from any previously used edges and our task is to to show we can embed an additional well-behaved $\St$ using these edges. 

Handshaking lemma gives us:
\begin{align*}
|A|& \le 4e/L_2=16td,\\ 
|B|& \le 4e/L_3=16td^{2} \enskip \text{and} \\ 
|C|& \le 4e/L_4=16td^{2}k.
\end{align*}
This implies $|A\cup B| \le 32 td^{2}$ and $|A\cup B\cup C| \le 48td^{2}k$.
Hence the total number of edges with at least $2$ vertices in $A$, or at least $3$ vertices in $A\cup B$, or all $4$ vertices in $A\cup B\cup C$
is upper bounded by
\begin{align*}
\binom{|A|}{2}\binom{n}{2} +\binom{|A\cup B|}{3}n+\binom{|A\cup B\cup C|}{4}  &\le 2^6t^2d^2n^2 &    &+2^{10}t^3d^6n& &+2^{19}t^4d^8k^4 & & \quad\quad\quad \\
 &\le 2^6d^{5/2}n^2 & &+2^{10}k^3 d^6 n & &+2^{19}d^9k^4& & \quad\quad\quad\\
 &\le 2^6n^3k   & &+2^{10}n^3k& &+2^{19}n^3k& &
\le 2^{20}kn^3 \le e/16, \quad\quad\quad
\end{align*}
where we used $t^4\le d$ and $t \le k$ in the second inequality and $n=kd^3$ in the third. In particular, there are at least $e/16$ remaining edges (which do not touch used vertices) such that they have at least $1$ vertex in $D$, at least $2$ vertices in $C \cup D$ and at least $3$ vertices in $B \cup C \cup D$. Furthermore, there exists a subset $F$ of these edges of size $|F|\ge (e/16)/ 35 \gg kn^3$ which all have the same number of vertices in each of $A, B, C$ and $D$, since there are\footnote{The number of non-negative integer solutions to $x_1+x_2+x_3+x_4=4$.} $\binom{4+3}{3}=35$ different types of edges according to how many vertices they have in each of the sets $A,B,C$ and $D$.
We distinguish several cases depending on how many vertices our edges in $F$ have in $D$.

\begin{enumerate}[label={\em Case (\roman*).}]

\item All edges in $F$ have $4$ vertices in $D$.

    Any $\St$ we find in this case is well-behaved so we are done by \Cref{lem:4-uniform-single-star-lastr}.

\item All edges in $F$ have exactly $3$ vertices in $D$.

There is a vertex $v$ in $A\cup B\cup C$ of degree at least $\frac{|F|}{|A\cup B\cup C|} \gg \frac{kn^3}{td^2k}=n^{2}dk/t$. Using \Cref{thm:3-uniform-many-star-lastr} (with $s:=dk/t$ and $h:=d$ so that $s \ge k$ and $ns=k^2d^4/t \ge h^2k^2$), we get at least $d$ vertex-disjoint copies of $\St_3(d,k)$, since $$s \ge d, \quad\quad \frac{\sqrt{sn}}h = \frac{kd}{\sqrt{t}}\ge d\quad\quad \text{ and } \quad\quad \frac{s^{1/3}n^{2/3}}{hk}=(d^4/t)^{1/3}\ge d.$$ Together with $v$, this gives a desired well-behaved copy of $\St$.
  
\item All edges in $F$ have exactly $2$ vertices in $D$.

    There can be at most $|A||B|\binom{n}{2} \le 128t^2n^3/k < |F|$ edges in $F$ with $1$ vertex in $A$ and $1$ in $B$. Hence our edges either have exactly $2$ vertices in $B$ and $2$ in $D$, or at least $1$ vertex in $C.$

\begin{enumerate}[label={\em Subcase (\alph*).}]

\item All edges in $F$ have $2$ vertices in $B$ and $2$ in $D$.\\
Given a vertex set $S$, a pair of vertices in $S$ is called an $S$-$S$ pair. Denote by $\mathcal{P}$ the set of $B$-$B$ pairs with at least $|F|/|B|^2 \ge 3nkd^2$ pairs in their link graph. Since there are $\binom{|B|}{2}$ different $B$-$B$ pairs, ones outside of $\mathcal{P}$ belong to at most $\binom{|B|}{2} \cdot |F|/|B|^2 \le |F|/2$ edges. Hence, the remaining $|F|/2$ edges have their $B$-$B$ pair belonging to $\mathcal{P}$. Each $B-B$ pair can extend into an edge in at most $\binom{|D|}{2}$ many ways. Hence, $$|\mathcal{P}| \ge \frac{|F|}{2}/\binom{|D|}{2} \ge 16kn \ge d|B|.$$ Therefore, $\mathcal{P}$ contains a star of size $d$. We embed vertex-disjoint stars $S_k$ into the link graphs of the leaves of our star in $\mathcal{P}$, dealing with one leaf at a time and moving to the next one when we found $d$ stars $S_k$ inside its link. Unless we are done, we have used at most $2kd^2$ vertices which can touch at most $2kd^2 \cdot n$ many $D$-$D$ pairs within the current link graph. Thus we still have at least $nkd^2 \ge kn$ many $D$-$D$ pairs in the link graph, disjoint from any already used vertices, which means we can find another $S_k,$ as desired. 
        
\item All edges have at least $1$ vertex in $C$ and exactly $2$ vertices in $D$.\\
        There is a vertex $v \in A\cup B\cup C$ which appears in at least $\frac{|F|}{|A\cup B\cup C|} \gg \frac{kn^3}{td^2k}= n^{3}/(td^2)$ edges with the remaining vertices being one in $C$ and two in $D$. By \Cref{lem:last regime-first claim} (note that $|C| \le 16td^2k=16nt/d$), the link graph of $v$ contains $d$ vertex-disjoint copies of $\St_3(d,k)$ whose leaves lie in $D$.
        
\end{enumerate}
 \item All edges in $F$ have exactly $1$ vertex in $D$.

Let $\mathcal{T}$ denote the collection of all triples in $A\cup B\cup C$ of codegree at least $$\frac{|F|}{|A\cup B\cup C|^3}\ge \frac{3kn^3}{(td^2k)^3}=\frac{3kd^3}{t^3}\ge 3kd^2.$$ Triples from $A\cup B\cup C$ outside $\mathcal{T}$ belong to at most $\binom{|A\cup B\cup C|}{3}\cdot \frac{|F|}{|A\cup B\cup C|^3}\leq|F|/2$ edges, so at least half of the edges in $F$ contain a triple in $\mathcal{T}$, and so $|\mathcal{T}| \ge |F|/(2n) \gg kn^2$. As there are at most $$|A|\cdot \binom{|A\cup B\cup C|}{2} \le 2^{16}t^3d^5k^2 \le 2^{16}d^6k^2=2^{16}n^2\le |\mathcal{T}|/2$$ triples with a vertex in $A$, we can remove them to obtain a collection $\mathcal{T'}$ of triples in $B \cup C$ such that $|\mathcal{T}'| \gg kn^2$. Now, if we find a well-behaved $\St_3(d,d)$ within $\mathcal{T}'$ (meaning that the second and third layer of vertices are embedded in $B\cup C$ and $C$ respectively) then we are done by greedily extending it and choosing distinct vertices in $D$ for $4$-th layer vertices of $\St$, which we can since each triple has codegree at least $3kd^2$. 
The existence of such $\St_3(d,d)$ is guaranteed by \Cref{lem:last regime-second claim}.
\end{enumerate}
\vspace{-1.1cm}
\end{proof}

Finally, the optimal unavoidable $4$-graphs at the very end of the range are of a very different flavour.

\begin{thm}\label{thm:truly last regime}
Every $n$-vertex $4$-graph with $e \ge n^{4-1/216}$ edges contains $\frac{1}{24}(e/n)^{1/4}$ vertex-disjoint copies of the complete $4$-partite $4$-graph $K_4(s,s,s,t)$, where $s=\frac{1}{12}\Big(\frac{\log n}{\log\left(n^4/e\right)}\Big)^{1/3}$ and $t=n^{1/4}$.
\end{thm}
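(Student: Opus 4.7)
My plan is to construct the $M = \tfrac{1}{24}(e/n)^{1/4}$ disjoint copies greedily, invoking at each step a K\H{o}v\'ari--S\'os--Tur\'an-style argument grounded in the Sidorenko inequality for complete multipartite hypergraphs. To keep the greedy procedure alive I would force the big part $D$ (of size $t = n^{1/4}$) of each $K_4(s,s,s,t)$ to lie inside a fixed set $V_0$ of low-degree vertices, so that every copy destroys only a few edges.

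First, let $L := \log(n^4/e)$, so that $e = n^4 \exp(-L)$ and the key arithmetic identity $L s^3 = \log n / 1728$ holds, giving $(e/n^4)^{s^3} = n^{-1/1728}$ independently of $L$. I may assume $s \ge 1$, since otherwise $K_4(s,s,s,t)$ is vacuous under the standard convention. Set the degree threshold $D = 6 e^{3/4}$ and $V_0 = \{v \in V : \deg_G(v) \le D\}$. Markov gives $|V \setminus V_0| \le 4e/D = \tfrac{2}{3} e^{1/4}$, hence $\binom{|V \setminus V_0|}{4} \le e/100$, so at least $e/2$ edges of $G$ touch $V_0$ and $|V_0| \ge n/3$. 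Note the crucial identities $Mt = e^{1/4}/24$ and $MtD = e/4$.

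Next, suppose copies $H_1, \ldots, H_i$ ($i < M$) with $t$-parts in $V_0$ have been constructed; let $U_i$ be their union, so $|U_i \cap V_0| \le Mt$ and $|U_i \setminus V_0| \le 3Ms$. Edges of $G$ through $U_i$ are at most $Mt \cdot D + 3Ms \cdot \binom{n}{3} \le e/4 + o(e) \le e/3$, where the $o(e)$ bound uses $s = O((\log n)^{1/3})$ and $e \ge n^{4-1/216}$. Consequently the residual graph $G_i := G \setminus U_i$ still has at least $e/2$ edges incident to $V_0 \setminus U_i$. To find the next copy I would count, for each ordered triple $(A, B, C) \in (V \setminus U_i)^{3s}$,
\[
N(A, B, C) = \big|\{v \in V_0 \setminus U_i : v \notin A \cup B \cup C,\ A \times B \times C \times \{v\} \subseteq E(G_i)\}\big|,
\]
and observe
\[
\sum_{(A, B, C)} N(A, B, C) = \sum_{v \in V_0 \setminus U_i} \mathrm{hom}(K^{(3)}_{s,s,s}, L_v^{G_i}),
\]
where $L_v^{G_i}$ is the link $3$-graph of $v$ in $G_i$. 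The Sidorenko-type inequality for complete tripartite hypergraphs (following from iterated H\"older) gives $\mathrm{hom}(K^{(3)}_{s,s,s}, H) \ge n^{3s}(6 |E(H)|/n^3)^{s^3}$. Combining this with the convexity of $x \mapsto x^{s^3}$ (valid since $s \ge 1$) and $\sum_{v \in V_0 \setminus U_i} \deg_{G_i}(v) \ge e/2$ yields the average
\[
\frac{1}{n^{3s}}\sum_{(A, B, C)} N(A, B, C) \ \ge\ 3^{s^3} n (e/n^4)^{s^3} = 3^{s^3} n^{1 - 1/1728} \ \gg\ n^{1/4} = t.
\]
Hence some $(A, B, C)$ satisfies $N(A, B, C) \ge t$, and picking any $t$ vertices from the witnessing set as $D$ produces $K_4(s, s, s, t)$ with $t$-part in $V_0 \setminus U_i$.

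The main obstacle is the delicate calibration of $s$: the identity $L s^3 = \log n / 1728$ is precisely what makes the Sidorenko lower bound uniform at $n^{1 - 1/1728}$ across the dense regime, comfortably dominating $t = n^{1/4}$. Routine bookkeeping, which I would defer, ensures $A, B, C$ have distinct entries and are pairwise disjoint from $D$; these constraints eliminate only an $O((s^2 + st + t^2)/n) = o(1)$ fraction of the configurations and thus do not affect the conclusion.
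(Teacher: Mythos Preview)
Your proof is correct and follows the same high-level architecture as the paper: both arguments greedily build the $M$ disjoint copies, forcing the large part of size $t=n^{1/4}$ to lie among low-degree (``non-expanding'') vertices so that each found copy only destroys a controlled number of edges. The degree threshold, the edge-accounting for the removal step, and the resulting residual graph with $\ge e/2$ useful edges are essentially identical in both.

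The genuine difference lies in how each individual copy is extracted from the residual graph. The paper proves an inductive K\H{o}v\'ari--S\'os--Tur\'an lemma (Lemma~A.2): it views the remaining edges as a bipartite incidence between $V_1$ and $V_2\times\cdots\times V_r$, applies the classical KST theorem to peel off one $s$-set, and recurses. You instead invoke the Sidorenko inequality $\mathrm{hom}(K^{(3)}_{s,s,s},H)\ge n^{3s}(6|E(H)|/n^3)^{s^3}$ for the link $3$-graphs, then average over $v\in V_0$ using convexity of $x\mapsto x^{s^3}$. Unwinding the iterated H\"older that proves your Sidorenko inequality recovers exactly the paper's inductive KST computation, so the two routes are equivalent in content; yours packages the induction into a single named inequality and makes the role of the identity $Ls^3=\log n/1728$ more transparent, while the paper's version is slightly more self-contained. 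One small point: in your deferred bookkeeping, cross-part coincidences ($a_i=b_j$, or $v\in A\cup B\cup C$) are in fact automatically excluded by the $4$-uniformity, so only within-part repeats need to be subtracted, and their total contribution to $\tfrac{1}{n^{3s}}\sum N(A,B,C)$ is $O(s^2)$, negligible against $n^{1-1/1728}$.
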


As the proof is an easy consequence of the well-known K\"{o}v\'ari-S\'os-Tur\'an theorem and is very similar to its $3$-uniform analogue (\cite[Theorem 8]{chung1987unavoidable}), we defer it to the appendix.

\subsection{Lower bounds}
Let us now deduce the lower bound of the last regime of the unavoidability problem, i.e. we show the lower bound in \Cref{thm:main} (iii). Note first that if $e \le n^{4-\eps}$ for any $\eps>0$ we know $\frac{e^{1/4}\log n}{\log(\binom{n}{4}/e)} \approx e^{1/4}$. In particular, for $n^3 \ll e \le n^{4-1/216}$, we can use \Cref{thm:last regime-lower bound} with $k \gtrsim e/n^3$, to conclude there is an $(n,e)$-unavoidable $4$-graph with $$\min(k,(n/k)^{1/12}) \cdot (n/k)^{2/3} k= \min(n^{2/3}k^{4/3},n^{3/4}k^{1/4}) \gtrsim \min(e^{4/3}/n^{10/3}, e^{1/4})$$ edges, showing the desired bound. 

Similarly for $n^{4-1/216} \le e \ll n^{4}$, \Cref{thm:truly last regime} provides us with an $(n,e)$-unavoidable graph with $$\gtrsim (e/n)^{1/4} \frac{\log n}{\log (n^4/e)}n^{1/4} \gtrsim \frac{e^{1/4}\log n}{\log(\binom{n}{4}/e)}$$ edges, as desired.

\subsection{Upper bounds}\label{subsec:upper-bounds}
The results of the previous sections complete the picture in terms of lower bounds on $\un_4(n,e).$ Let us now turn to the upper bounds. They turn out to be much simpler than in the previous case, largely thanks to the following easy counting lemma from \cite{chung1987unavoidable}.

\begin{lem}\label{lem:general-ub}
If an $r$-graph $H$ on $p$ vertices with $q$ edges is $(n,e)$-unavoidable then 
$$q < \frac{p \log n}{\log \left(\binom{n}{r}/e \right)}. $$
\end{lem}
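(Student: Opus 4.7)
The plan is a simple double counting argument on the collection of all $r$-graphs on vertex set $[n]$ with exactly $e$ edges. Set $N = \binom{n}{r}$, so there are $\binom{N}{e}$ such $r$-graphs in total. By assumption, every one of them contains a copy of $H$, so this is a lower bound on the number of $e$-edge $r$-graphs that contain $H$.

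For the matching upper bound, I would count (graph, copy)-pairs: first choose an injection $\phi:V(H)\to[n]$, giving $n(n-1)\cdots(n-p+1)\le n^p$ choices, which determines an embedded copy of $H$ with $q$ prescribed edges; then extend this $q$-edge set to a full $e$-edge $r$-graph by choosing the remaining $e-q$ edges from the $N-q$ leftover possibilities, which can be done in $\binom{N-q}{e-q}$ ways. Any $H$-containing $e$-edge $r$-graph is counted at least once this way, so
$$\binom{N}{e}\le n^p\binom{N-q}{e-q}.$$

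Rearranging and cancelling factorials gives
$$n^p \ge \frac{\binom{N}{e}}{\binom{N-q}{e-q}} = \prod_{i=0}^{q-1}\frac{N-i}{e-i} > \left(\frac{N}{e}\right)^q,$$
where the strict inequality in the last step uses $N>e$ (so that each factor $(N-i)/(e-i)$ strictly exceeds $N/e$) together with $q\ge 1$ (the case $q=0$ making the claimed bound trivial). Taking logarithms then yields $q\log(N/e) < p\log n$, which is exactly the desired bound $q < p\log n / \log(\binom{n}{r}/e)$.

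There is really no obstacle here; the only subtle point is securing strictness rather than a weak inequality, but this is automatic from the product expansion above as soon as $q\ge 1$. I would also silently assume $e < \binom{n}{r}$ (else $\log(\binom{n}{r}/e)\le 0$ and the statement is vacuous or meaningless), which is the only regime in which the conclusion is informative anyway.
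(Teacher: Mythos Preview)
Your proof is correct and is exactly the standard double-counting argument; the paper itself does not give a proof but simply cites this lemma from Chung and Erd\H{o}s \cite{chung1987unavoidable}.

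One tiny imprecision worth fixing: your claim that ``each factor $(N-i)/(e-i)$ strictly exceeds $N/e$'' fails at $i=0$, where the factor equals $N/e$; so for $q=1$ the product argument alone yields only $n^p \ge N/e$. Strictness is recovered elsewhere: whenever $q\ge 1$ one has $p\ge r\ge 2$, and hence $n(n-1)\cdots(n-p+1) < n^p$ strictly, which already makes the first displayed inequality strict. (For $q\ge 2$ your product argument works as stated, since the $i\ge 1$ factors are indeed strictly larger than $N/e$.)
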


The following result, together with monotonicity of $\un_4(n,e)$ establishes the upper bounds for \Cref{thm:main} (iii) and completes its proof.

\begin{thm}\label{thm:4-ub and 5-ub}\textcolor{white}{ }
\begin{enumerate}
    \item[\rm (i)] For $n^3\ll e \ll n^{40/13}$ we have $\un_4(n,e) \lesssim e^{4/3}/n^{10/3}$.
    \item[\rm (ii)] For $n^{40/13} \ll e \le \binom{n}{4}$ we have $\un_4(n,e)\lesssim \frac{e^{1/4}\log n}{\log \left(\binom{n}{4}/e\right)}$.
\end{enumerate}
\end{thm}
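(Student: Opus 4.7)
My plan is to apply Lemma~\ref{lem:general-ub} together with a handful of explicit constructions that force structure on an $(n,e)$-unavoidable hypergraph $H$. Throughout I may assume $H$ has no isolated vertices, since removing them preserves $(n,e)$-unavoidability and does not change $|E(H)|$.

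For part (ii), a single construction suffices. Setting $a := \lceil (24e)^{1/4} \rceil$ and fixing $A \subseteq [n]$ with $|A| = a$, the $4$-graph $G$ on $[n]$ whose edges are the $4$-subsets of $A$ has $\binom{a}{4} \ge e$ edges, so $H \subseteq G$ forces $|V(H)| \le a \lesssim e^{1/4}$. Plugging this into Lemma~\ref{lem:general-ub} yields $|E(H)| \lesssim \frac{e^{1/4} \log n}{\log(\binom{n}{4}/e)}$.

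For part (i), the above approach only gives $|E(H)| \lesssim e^{1/4}$ (since the log factor is bounded by a constant in this regime), which is weaker than the target. To sharpen the bound on $|V(H)|$, I would invoke two further constructions. First, set $s := \lceil C_1 (e/n)^{1/3} \rceil$ for a sufficiently large constant $C_1$ and let $G'$ be the disjoint union of $\lfloor n/s \rfloor$ complete $4$-graphs on $s$ vertices; then $|E(G')| \gtrsim n s^3 \ge e$, and each connected component of $H$ must sit inside one of the cliques, so $|V(H_j)| \le s$ for every component $H_j$. Second, set $m := \lceil C_2 e/n^3 \rceil$ for a large constant $C_2$, fix $S \subseteq [n]$ with $|S| = m$, and let $G''$ be the $4$-graph of all $4$-subsets of $[n]$ meeting $S$; since $|E(G'')| = \binom{n}{4} - \binom{n-m}{4} \gtrsim m n^3 \ge e$, the preimage of $S$ under any embedding $H \hookrightarrow G''$ is a vertex cover of $H$ of size at most $m$. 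Since $H$ has no isolated vertices, every connected component contains a vertex of this cover, so $H$ has at most $m$ components.

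Combining these two constraints gives $|V(H)| \le m s \lesssim (e/n^3)(e/n)^{1/3} = e^{4/3}/n^{10/3}$. In the regime $e \ll n^{40/13}$ we have $\log n / \log(\binom{n}{4}/e) = O(1)$, so Lemma~\ref{lem:general-ub} yields $|E(H)| \lesssim e^{4/3}/n^{10/3}$ as required. The crucial new ingredient is the ``multi-star'' construction $G''$, which controls the number of connected components of $H$ via a small vertex cover; once this is combined with the per-component size bound from $G'$, the aggregation is routine.
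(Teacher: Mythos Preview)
Your proof is correct and follows essentially the same route as the paper. Part (ii) is identical; for part (i) the paper uses the same clique-union construction $G'$, and for the component bound uses the $4$-graph of edges with exactly one vertex in a set $V_1$ of size $\approx e/n^3$ and three in $V_2$, which---just like your $G''$---forces a vertex cover of size $\approx e/n^3$ and hence at most that many components.
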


\begin{proof}[Proof of \Cref{thm:4-ub and 5-ub}]
(i) Let $H$ be an $(n,e)$-unavoidable graph. Let $t=3(e/n)^{1/3}$. Take $n/t$ disjoint copies of a $4$-uniform clique on $t$ vertices. In total this gives us $\frac{n}{t}\binom{t}{4}>e$ edges so this graph must contain $H$ as a subgraph. In particular, every connected component of $H$ has size at most $t$. Now take another graph with a set $V_1$ of $50e/n^3 \le n/2$ vertices and a set $V_2$ of $n/2$ vertices and we pick all edges having one vertex in $V_1$ and three in $V_2$; this graph has more than $e$ edges, so it must contain $H$ as a subgraph. This implies $H$ can have at most $50e/n^3$ connected components. In turn this implies $H$ has at most $50te/n^3= 150e^{4/3}/n^{10/3}$ vertices. Now \Cref{lem:general-ub} implies $H$ has at most $\frac{13}{12} |V(H)|$ edges, which completes the proof. 

(ii) Let $H$ be an $(n,e)$-unavoidable graph. Take a $4$-uniform clique on $m=3e^{1/4}$ vertices; it has $\binom{m}{4}>e$ edges so must contain $H$,  hence $H$ has at most $m$ vertices. The claimed upper bound again follows from \Cref{lem:general-ub}.
\end{proof}



\section{Concluding remarks and open problems}\label{sec:conc-remarks}
In this paper we resolve a question of Chung and Erd\H{o}s which asks to determine the order of magnitude of $\un_4(n, e)$ defined as the maximum number of edges in a $4$-graph $G$ which is contained in every $4$-graph on $n$ vertices and $e \ll \binom{n}{4}$ edges. The most immediate open question is to answer their question for any uniformity.

\begin{qn}[Chung and Erd\H{o}s, 1983]\label{qn:1}
What is the order of magnitude of $\un_r(n,e)$ for any $r$?
\end{qn}

From our result the answer is now known for $r \le 4$ and $e \ll \binom{n}{4}$. In addition, our methods and certain further partial results give some indication about how the answer should behave for larger uniformities as well. For example, it seems likely that in general there are $\ceil{\frac{r}{2}}+1$ different regimes. The first one, when $e \ll n^{\floor{\frac{r}{2}}}$ always has an easy answer of $\un_r(n,e)=1$ and the following regimes are $n^{i}\ll e \ll n^{i+1}$ where $\floor{\frac{r}{2}} \le i \le r-1.$ In each regime (with the exception of 
$i=(r-1)/2$ and $i=r-1$) there are two competing bounds, which arise from the fact there are two 
sunflowers $\Su_r(t,2)$ with $\ex(n,\Su_r(t,2))\approx n^i$. Our answer in the second regime for the $4$-uniform case turned out to be a bit surprising and is in fact in-between the two natural guesses, so we are not willing to conjecture the correct value of the turning point for general uniformity. In the last regime, $i=r-1,$ the upper bound given by \Cref{thm:4-ub and 5-ub} generalises easily and seems to give the correct answer for all uniformities (as long as $e \ll n^r$).
\begin{conj}\label{conj:1}
For any $r \ge 2$ and $\eps>0$, provided $n^{r-1} \ll e \ll n^{r-\eps}$ we have 
$$\un_r(n,e)\approx \min(e^{\frac{r}{r-1}}/n^{r-1+\frac1{r-1}},e^{\frac1r}).$$
\end{conj}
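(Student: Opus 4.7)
The plan is to match the two terms of the conjectured minimum by matching upper and lower bound constructions, generalising the $r=4$ arguments of Sections~4 and~5. The first term $e^{r/(r-1)}/n^{r-1+1/(r-1)}$ should correspond to disjoint unions of generalised stars $\St_r(d,\ldots,d,k)$ with $d=(n/k)^{1/(r-1)}$, $k=e/n^{r-1}$, and the second term $e^{1/r}$ should correspond to a single $r$-uniform clique on $\Theta(e^{1/r})$ vertices via Lemma~\ref{lem:general-ub}. Note that in the regime $e\ll n^{r-\eps}$ the logarithmic factor from Lemma~\ref{lem:general-ub} satisfies $\log n / \log(\binom{n}{r}/e) = \Theta(1)$, so this is the right upper-bound form.

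For the upper bound I would first take $n/t$ vertex-disjoint $r$-cliques on $t=C(e/n)^{1/(r-1)}$ vertices, whose edge count exceeds $e$, forcing every connected component of an unavoidable $H$ to have at most $t$ vertices. Next I would take a bipartition $V_1\cup V_2$ with $|V_1|=C'e/n^{r-1}$ and all edges having a single vertex in $V_1$; this gives more than $e$ edges and forces every component of $H$ to contain a vertex of $V_1$, bounding the number of components by $|V_1|$. Together these give $|V(H)|\lesssim e^{r/(r-1)}/n^{r-1+1/(r-1)}$, and applying Lemma~\ref{lem:general-ub} yields the first term up to constants. Independently, comparing $H$ with a single $r$-clique on $Ce^{1/r}$ vertices gives $|V(H)|\lesssim e^{1/r}$, and a second application of Lemma~\ref{lem:general-ub} yields the second term, completing the upper bound.

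For the lower bound I would inductively prove the natural generalisation of Theorem~\ref{thm:last regime-lower bound}: every $n$-vertex $r$-graph with $\gg k n^{r-1}$ edges contains $t=\min(k,d^{1/r})$ vertex-disjoint copies of $\St_r(d,\ldots,d,k)$, where $d=(n/k)^{1/(r-1)}$. Setting $k=e/n^{r-1}$ and multiplying by the edge count $d^{r-2}k$ per star gives exactly the first term in both sub-regimes. The induction on $r$ would mimic the structure of Section~4: first prove, by induction, strengthened $(r-1)$-uniform packing results analogous to Theorem~\ref{thm:3-uniform-many-star-lastr} and Lemmas~\ref{last regime-bounded degree}--\ref{lem:last regime-second claim}; then, for the $r$-uniform case, partition $V(G)$ into $r-1$ level sets $A_1,\ldots,A_{r-1}$ by degree thresholds $L_2,\ldots,L_{r-1}$ (defined so that a well-behaved star embeds the $j$-th layer among vertices of degree $\le L_j$), classify edges by their intersection pattern with the levels, and in each pattern either extract many edges in the ``deep'' levels (where the inductive hypothesis applies in a link graph) or use an expanding-tuple argument to build one level of the star at a time.

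The main obstacle is the case analysis in the induction. Already at $r=4$ the proof splits into four intersection patterns plus subcases, each invoking a tailored $3$-uniform packing lemma; for general $r$ one would face $\binom{2r-1}{r-1}$ intersection patterns, and must prove the $(r-1)$-uniform packing lemmas strongly enough that no slack is lost in propagating the $\min(k,d^{1/r})$ count through each layer of the recursion. In particular, verifying that the correct exponent is $1/r$ (not some larger loss term depending on $r$) requires an ``expanding-tuple plus low-degree-embedding'' scheme analogous to \Cref{lem:last regime-first claim,lem:last regime-second claim}, executed uniformly at every level; one would probably formulate a single master lemma, parametrised by the uniformity, that packs generalised stars subject to both degree and codegree constraints, and apply it iteratively along the layers. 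Getting the parameters to close in closed form for all $r$ simultaneously is the delicate part.
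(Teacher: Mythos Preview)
The statement you are attempting to prove is labelled \emph{Conjecture} in the paper, not Theorem; the paper gives no proof of it. In the concluding remarks the authors explicitly say that the upper bound given by \Cref{thm:4-ub and 5-ub} ``generalises easily and seems to give the correct answer for all uniformities'', while for the lower bound ``bounding their Tur\'an numbers seems to be highly non-trivial''. So there is no proof in the paper to compare your attempt against.

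Your upper-bound argument is correct and is exactly the easy generalisation the authors allude to: the two host graphs (disjoint $r$-cliques of size $\Theta((e/n)^{1/(r-1)})$; and the star-like graph on a small part $V_1$ of size $\Theta(e/n^{r-1})$) combine with \Cref{lem:general-ub} to give the bound $\un_r(n,e)\lesssim \min\big(e^{r/(r-1)}/n^{r-1+1/(r-1)},\,e^{1/r}\big)$ whenever $e\ll n^{r-\eps}$.

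Your lower-bound plan, however, is not a proof but an outline of what one would need to do, and you correctly identify the obstacle yourself: the inductive scheme requires, at each uniformity, a battery of packing lemmas for generalised $(r-1)$-uniform stars under degree and codegree restrictions (analogues of \Cref{thm:3-uniform-many-star-lastr}, \Cref{last regime-bounded degree}, \Cref{lem:last regime-first claim}, \Cref{lem:last regime-second claim}), strong enough that the count $\min(k,d^{1/r})$ survives through every layer of the recursion. The paper does not supply these for $r\ge 5$, and neither do you; formulating and proving the ``single master lemma'' you propose is precisely the content of the conjecture. So as it stands your proposal establishes the upper bound but leaves the lower bound open, which is the same state of affairs as in the paper.
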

Here, even the optimal unavoidable $r$-graphs seem to be clear, namely they should consist of an appropriate number of copies of $\St_r((n/k)^{1/(r-1)},\ldots,(n/k)^{1/(r-1)}, k),$ however bounding their Tur\'an numbers seems to be highly non-trivial. The assumption $e \ll n^{r-\eps}$ was made since it is not hard to generalise \Cref{thm:truly last regime} (we do so in the Appendix) and hence determine $\un_r(n,e)$ for all $n^{r-\eps}\ll e \ll n^r$ for some $\eps>0$.

The main stumbling block for extending our methods to higher uniformities is the fact that Tur\'an numbers of sunflowers $\ex(n,\Su_r(t,k))$ are not very well understood when $r\ge 5$ and $k$ is allowed to depend on $n$, as pointed out by Chung and Erd\H{o}s in \cite{chung1987unavoidable}. The main reason being that these sunflowers represent main building blocks for all our examples, across most of the range. On this front, the appropriate generalisation of \Cref{thm:4-uniform-sunflowers} seems to be as follows.

\begin{conj} For every fixed $r \ge 5$ and $ t <r$ one has 
$\ex(n,\Su_r(t,k))\approx k^{\min\{t+1,r-t\}} n^{\max\{r-t-1,t\}}$.
\end{conj}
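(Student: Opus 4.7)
The plan is a proof by induction on $r$, separating the two regimes according to where $t$ sits relative to $(r-1)/2$. Write \emph{Case A} for $t\le (r-1)/2$, where the target is $k^{t+1}n^{r-t-1}$, and \emph{Case B} for $t\ge (r-1)/2$, where the target is $k^{r-t}n^{t}$; the two formulas coincide on the boundary $2t+1=r$. The lower bounds I would exhibit by two natural constructions, one for each case. For Case A, split $[n]$ into $A$ of size $n-(k+t-1)$ and $B$ of size $k+t-1$, and take all $r$-sets meeting $B$ in exactly $t+1$ vertices, giving $\Theta(k^{t+1}n^{r-t-1})$ edges; a short analysis on $j=|C\cap B|$ (where $C$ is the would-be kernel) shows that any putative $\Su_r(t,k)$ would require at least $j+k(t+1-j)\ge k+t$ vertices in $B$, which is impossible. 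For Case B, pick $\Theta((n/K)^t)$ random $K$-subsets of $[n]$ with $K=(r-t)(k-1)+t$ and declare every $r$-subset of every block an edge; a deletion argument (remove one block from each pair that shares a $t$-set) then forces each $t$-set to lie in at most one block, so any $\Su_r(t,k)$ must sit inside a single block, which is too small to host $k$ disjoint petals of size $r-t$. This yields $\Theta(k^{r-t}n^t)$ edges; at $t=r-1$ it specialises to the union of $k-1$ copies of a partial Steiner system $S(r-1,r,n)$ already used several times in the paper.

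For the upper bound, strict Case B ($t\ge r/2$) is the easy half: if $e\gg k^{r-t}n^t$, averaging produces a vertex of degree $\gg k^{r-t}n^{t-1}$, whose link $(r-1)$-graph then contains $\Su_{r-1}(t-1,k)$ by the inductive hypothesis (the inductive formula $k^{(r-1)-(t-1)}n^{t-1}=k^{r-t}n^{t-1}$ matches exactly), and adjoining this vertex to the kernel gives $\Su_r(t,k)$. Strict Case A ($t\le (r-2)/2$) is the heart of the matter, and here the naive link-and-average argument breaks down: the structure one would want in the link is $\Su_{r-1}(t,k)$ (so that the kernel does not inflate to size $t+1$ when the apex vertex is added), but its Tur\'an number is larger than the link density guaranteed by averaging on $e\gg k^{t+1}n^{r-t-1}$. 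The right strategy generalises \Cref{lem:4-uniform t=1}: run an iterative expansion scheme across $r-t-1$ levels. At level $\ell=1,\ldots,r-t-1$, inspect edges containing an $M_\ell$-expanding $(r-\ell)$-set, and either locate a vertex $v$ whose expanding-link $D_v$ contains $\Su_{r-\ell}(t,k)$ by induction — then greedily complete each of the $k$ petals to an $r$-edge using the $M_\ell$ available extensions — or else discard those edges (a small fraction of the total) and descend one level. After $r-t-1$ descents the surviving graph carries $M$-expanding $(t+1)$-sets, and a star of $k$ such sets through a common $t$-subset assembles $\Su_r(t,k)$ directly.

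The principal obstacle is the bookkeeping in Case A together with the smooth transition at the boundary $2t+1=r$, where both cases give the same bound but neither averaging scheme is tight. The thresholds $M_1<M_2<\cdots<M_{r-t-1}$ must be chosen so that, simultaneously: only a small constant fraction of the remaining edges are discarded at each level; $M_\ell$ exceeds the vertex count $t+k(r-t)$ of a finished sunflower by enough slack that greedy extension always finds a fresh vertex; and the residual edge count at level $\ell$, divided by the $n^{\ell-1}$ lost to averaging, still beats the inductive Tur\'an bound $\ex(n,\Su_{r-\ell}(t,k))\approx k^{t+1}n^{r-\ell-t-1}$ by a comfortable margin. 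A natural choice that ought to propagate is $M_\ell=(Ckr)^{r-\ell-t}$ for an absolute constant $C$. Verifying this propagation through the double induction — and, in particular, ensuring that the boundary case $2t+1=r$ can be absorbed into the Case A recursion (or handled as an extra base step) — is where I expect the bulk of the technical work to lie.
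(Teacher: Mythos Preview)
This statement appears in the paper as a \emph{conjecture}, not a theorem: there is no proof in the paper to compare against. The paper remarks that the constructions of \Cref{sec:sunflower} generalise to give the lower bound (in line with your two constructions), that ``some methods for upper bounds also generalise'', and that the authors ``can prove this conjecture for several more uniformities, although even in the case $r=5$ we needed additional ideas''. So you are sketching an approach to a problem the paper leaves open.

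Your strict Case B step and your Case A level-$1$ step are correct, and you are right to flag the boundary $2t+1=r$ as the crux. Already at $r=5,\,t=2$ one has $e\gg k^3n^2$, so a typical link carries $\gg k^3n$ edges, while both $\ex(n,\Su_4(1,k))$ and $\ex(n,\Su_4(2,k))$ are $\approx k^2n^2$; neither your Case A nor your Case B averaging reaches the inductive threshold. This is precisely where the paper says extra ideas are required, and your outline does not supply them.

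There are two further concrete problems with the Case A recursion. First, your formula $M_\ell=(Ckr)^{r-\ell-t}$ is \emph{decreasing} in $\ell$, contradicting your own requirement $M_1<\cdots<M_{r-t-1}$ and the $r=4,\,t=1$ precedent in \Cref{lem:4-uniform t=1}, where $M_1=3k$ for triples and $M_2=18k^2$ for pairs. The thresholds must grow with $\ell$: at level $\ell$ the greedy extension of an $M_\ell$-expanding $(r-\ell)$-set needs structure (e.g.\ a large matching) in its $\ell$-uniform link, and this structure is forced only by the codegree bound $<M_{\ell-1}$ on $(r-\ell+1)$-sets inherited from the previous level, which requires $M_\ell$ large relative to $M_{\ell-1}$. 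Second, your assertion that the residual density beats $\ex(n,\Su_{r-\ell}(t,k))\approx k^{t+1}n^{r-\ell-t-1}$ tacitly assumes $(r-\ell,t)$ stays in Case A. Once $\ell> r-2t-1$ the pair $(r-\ell,t)$ falls into Case B and the correct target becomes $k^{r-\ell-t}n^t$; the ratio of your residual density to this is $k(k/n)^{\ell-(r-2t)}$, which is $<1$ as soon as $\ell\ge r-2t+1$. Hence the last $t-1$ levels of your descent cannot be handled by the inductive call you describe. For $t=1$ this is vacuous (consistent with the paper's remark after \Cref{lem:4-uniform t=1} that the $t=1$ case extends easily), but for $t\ge 2$ it is a genuine gap, and your plan gives no substitute argument for these levels.
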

This would generalise a result of Frankl and F{\"u}redi \cite{FF85} and F{\"u}redi \cite{Furedi83} (who solve it when $k$ is a constant) and the question may be attributed to Chung and Erd\H{o}s. One can generalise our constructions from \Cref{sec:sunflower} to show the lower bound part, and some methods for upper bounds also generalise. We can prove this conjecture for several more uniformities, although even in the case $r=5$ we needed additional ideas.

Since generalised stars seem to be optimal unavoidable graphs, as long as $e\ll n^{r-1}$, (at which point their unions take over) the following seems to be the key problem one needs to resolve in order to answer \Cref{qn:1}.
\begin{qn}\label{qn:gen-star}
Let $r$ be fixed. Determine the order of magnitude of $\ex(n, \St_r(d_1,\ldots, d_{r-1}))$, where $d_i$'s are allowed to depend on $n$.
\end{qn}
One can read out the answer for $r=3$ from \Cref{prop:3-uniform-single-star-lastr} and we believe our methods suffice to also solve it for $r=4$. Yet for higher uniformities even the case in which we keep the $d_i$'s fixed, which is yet another generalisation of the result of Frankl and F{\"u}redi \cite{FF85} and F{\"u}redi \cite{Furedi83} on Tur\'an numbers of sunflowers with fixed uniformity and number of petals, seems potentially interesting.

Both \Cref{conj:1} and \Cref{qn:gen-star} are examples of an interesting general question. Tur\'an numbers of both graphs and hypergraphs are well-studied, but in most cases one is only interested in Tur\'an numbers of a graph of fixed size. For many classical examples one can ask what happens if the fixed size restriction is removed. This can be very useful in a number of situations, perhaps the most ubiquitous being the K\"{o}v\'ari-S\'os-Tur\'an theorem \cite{Kovari-Sos-Turan} which is often used to find complete bipartite graphs of order even comparable to that of the underlying graph. For some additional examples see \cite{erdos-book,Furedi15,alon-sparse-turan}. It definitely seems there is plenty of potential for interesting future work in this direction.

So far we have avoided discussing the assumption $e \ll n^4$ in \Cref{thm:main}, mostly following in line of Chung and Erd\H{o}s. In fact we can replace this condition in \Cref{thm:main} with $e\le \binom{n}{4}-n^{1+c}$ for any $c>0$ (it requires choosing $s$ and $t$ slightly differently in \Cref{thm:truly last regime}). The problem seems to change significantly at this point and attains a very different flavour. Even the graph case, which was raised by Chung and Erd\H{o}s in 1983 has only recently been resolved in \cite{BDS19} and it suggests that around this point the optimal extremal examples seem to become (pseudo)random graphs in place of the complete $r$-partite graphs and the answer changes. Given that even the graph case turned out to be somewhat involved and relies on completely different ideas, we leave this open for future research.

\begin{qn}
For $r \ge 3$ determine the order of magnitude of $\un_r(n,e)$ when $e=(1-o(1))\binom{n}{r}$.
\end{qn}

Another natural follow-up question is to determine how optimal (up to a constant factor) $(n,e)$-unavoidable $r$-graphs look like. It is entirely possible to answer this question without answering \Cref{qn:1}, since one can potentially force the structure of an optimal unavoidable graph similarly as in \Cref{thm:middle-upper-bound} (as was demonstrated by Chung and Erd\H{o}s in the $3$-uniform case). At the very least we believe that optimal unavoidable graphs should be (close to) generalised stars as long as $e$ is not too close to $\binom{n}{4}$. Towards the end of the range the situation becomes blurry, as at least for part of the range, both copies of complete $r$-partite graphs and generalised stars are simultaneously optimal.

\providecommand{\bysame}{\leavevmode\hbox to3em{\hrulefill}\thinspace}
\providecommand{\MR}[1]{}
\providecommand{\MRhref}[2]{%
  \href{http://www.ams.org/mathscinet-getitem?mr=#1}{#2}
}
\providecommand{\href}[2]{#2}

\vspace{-0.1cm}
\appendix

\section{Large complete $r$-partite subgraphs of dense $r$-graphs}  

In this section we provide a proof of (a generalisation of) \Cref{thm:truly last regime}.

The proof is based on K\"{o}v\'ari-S\'os-Tur\'an theorem \cite{Kovari-Sos-Turan}, for hypergraphs. Since the sizes of the $r$-partite graphs we want to find grow with the number of vertices we need to go through the standard proof with care. The starting point is the graph case.

\begin{thm}[K\"{o}v\'ari, S\'os and Tur\'an \cite{Kovari-Sos-Turan}]
\label{thm:Kovari-Sos-Turan}
If $G=(A\cup B,E)$ is a bipartite graph and for some integers $s$ and $t$ we have
\[
t\binom{|A|}{s} < |B|\binom{|E|/|B|}{s}
\]
then $G$ contains a complete bipartite graph $K_{s,t}$ with the vertex class of size $s$ embedded in $A$.
\end{thm}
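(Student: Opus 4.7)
The plan is the standard double counting argument over incidences between $s$-subsets of $A$ and vertices of $B$. Specifically, I would consider the set $\mathcal{P}$ of pairs $(S, b)$ with $S \subseteq A$, $|S| = s$, and $b \in B$ adjacent to every vertex in $S$, and count $|\mathcal{P}|$ in two different ways.

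First I would sum over $b \in B$: for each such $b$, the number of valid $S$ equals $\binom{d(b)}{s}$, where $d(b)$ denotes the degree of $b$ into $A$. So $|\mathcal{P}| = \sum_{b \in B}\binom{d(b)}{s}$. Using convexity of the polynomial $\binom{x}{s}$ in the range $x \ge s-1$, together with Jensen's inequality and $\sum_{b} d(b) = |E|$, I obtain the lower bound
\[
|\mathcal{P}| \;\geq\; |B|\binom{|E|/|B|}{s}.
\]
One may assume throughout that $|E|/|B| \ge s-1$, since otherwise $\binom{|E|/|B|}{s} \le 0$ and the hypothesis of the theorem becomes vacuous.

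Next I would sum over $S$: if $G$ contains no $K_{s,t}$ with its $s$-side embedded in $A$, then every $s$-subset of $A$ has codegree at most $t-1$ into $B$, hence
\[
|\mathcal{P}| \;\leq\; (t-1)\binom{|A|}{s} \;<\; t\binom{|A|}{s}.
\]
Combining the two bounds yields $|B|\binom{|E|/|B|}{s} < t\binom{|A|}{s}$, which directly contradicts the assumed inequality in the theorem; this contradiction establishes existence of the desired $K_{s,t}$.

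The only real subtlety in this sketch is the Jensen step, since $\binom{x}{s}$ is not convex on all of $\mathbb{R}_{\ge 0}$ once $s \ge 3$. The cleanest way around this is the observation above that the interesting regime is $|E|/|B| \ge s-1$, where the polynomial is convex, and outside that regime the hypothesis of the theorem is empty. Beyond this, the argument is entirely routine double counting, so I expect no further obstacles.
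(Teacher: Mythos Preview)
The paper does not prove this statement itself; it is quoted as the classical K\"{o}v\'ari--S\'os--Tur\'an theorem and used as a black box in the appendix. Your double-counting argument is exactly the standard proof and is correct.

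One minor caveat on your handling of the Jensen step: the claim that $\binom{|E|/|B|}{s}\le 0$ whenever $|E|/|B|<s-1$ is not literally true for real arguments (for instance $\binom{1/2}{3}=1/16>0$). The clean fix is to apply Jensen to the function that equals $\binom{x}{s}$ for $x\ge s-1$ and $0$ for $x<s-1$; this function is convex on $[0,\infty)$ and agrees with $\binom{d(b)}{s}$ at every nonnegative integer $d(b)$, so one obtains $\sum_{b}\binom{d(b)}{s}\ge |B|\binom{|E|/|B|}{s}$ whenever $|E|/|B|\ge s-1$, which is the only regime in which the paper (and any reasonable application) uses the inequality.
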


We will apply the following in the not so dense case.

\begin{lem}\label{lem:Kovari-Sos-Turan r-partite}
Let $n,r$ and $s$ be positive integers with $r\ge 2$ and $n\ge 2^{(4s)^{r-1}}$. Let $V_1,\ldots,V_r$ be sets of size $n$ and $M$ be a subset of $V_1\times \cdots \times V_r$ with $|M|\ge n^{r-1/(4s)^{r-1}}$. Then there exists $A_1\times \cdots \times A_r \subset M$ such that $|A_i|=s$ for every $1\le i<r$ and $|A_r|=\sqrt{n}$. 
\end{lem}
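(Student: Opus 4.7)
The plan is to reduce to the bipartite K\"ov\'ari-S\'os-Tur\'an theorem (\Cref{thm:Kovari-Sos-Turan}) by peeling off the coordinates $V_1, V_2, \ldots, V_{r-2}$ one at a time via a KST-style double count, and then applying that theorem to the bipartite relation that remains on $V_{r-1} \times V_r$ to extract $A_{r-1}$ and $A_r$.

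For the peeling step I would set $M_0 := M$ and assume inductively that $A_1, \ldots, A_{i-1}$ have already been chosen, with
\[
M_{i-1} := \{(v_i, \ldots, v_r) \in V_i \times \cdots \times V_r : A_1 \times \cdots \times A_{i-1} \times \{(v_i, \ldots, v_r)\} \subseteq M\}.
\]
For $\vec v \in V_{i+1} \times \cdots \times V_r$ let $d(\vec v)$ denote the number of $v_i \in V_i$ with $(v_i, \vec v) \in M_{i-1}$. Convexity of $\binom{x}{s}$ gives
\[
\sum_{\vec v} \binom{d(\vec v)}{s} \ge n^{r-i} \binom{|M_{i-1}|/n^{r-i}}{s},
\]
and pigeonholing over $s$-subsets of $V_i$ produces some $A_i \subseteq V_i$ of size $s$ whose common neighborhood $M_i \subseteq V_{i+1} \times \cdots \times V_r$ has size at least $n^{r-i} \binom{|M_{i-1}|/n^{r-i}}{s}/\binom{n}{s}$. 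I would iterate this for $i = 1, \ldots, r-2$, and then apply \Cref{thm:Kovari-Sos-Turan} to $M_{r-2} \subseteq V_{r-1} \times V_r$ with parameters $s$ and $t = \sqrt{n}$ to obtain $A_{r-1}$ and $A_r$.

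The only real work is tracking the density along the iteration. Using $\binom{a}{s} \ge a^s/(2 \cdot s!)$ (valid throughout, since each $|M_i|/n^{r-i}$ will stay well above $2s$) together with $\binom{n}{s} \le n^s/s!$, the bound above translates, writing $|M_i| \ge n^{r-i-\alpha_i}$, into the recursion $\alpha_i \le s\,\alpha_{i-1} + \log_n 2$. Starting from $\alpha_0 = 1/(4s)^{r-1}$ and using the hypothesis $n \ge 2^{(4s)^{r-1}}$ to bound $\log_n 2 \le 1/(4s)^{r-1}$, iterating $r-2$ times yields $\alpha_{r-2} \lesssim s^{r-2}/(4s)^{r-1} = 1/(4^{r-1}s)$, which is comfortably below $1/2$. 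The final KST inequality $\sqrt{n}\binom{n}{s} < n\binom{|M_{r-2}|/n}{s}$ then reduces, after substitution, to $n^{1/2 - s\alpha_{r-2}} > 2$, and this holds with room to spare.

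The hard part, as usual in KST-style arguments, is really just this bookkeeping: making sure that $n \ge 2^{(4s)^{r-1}}$ is strong enough to absorb both the multiplicative $s$-factor introduced at each of the $r-2$ peeling steps and the additive $\log_n 2$ slack at every step, while keeping the final density above the KST threshold. Since the final exponent $1/(4^{r-1}s)$ is exponentially smaller than the budget $1/2$, the whole argument goes through with substantial slack.
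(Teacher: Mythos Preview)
Your proposal is correct and is essentially the paper's own argument: the paper proves the lemma by induction on $r$, applying bipartite K\"ov\'ari--S\'os--Tur\'an with $A=V_1$ and $B=V_2\times\cdots\times V_r$ to peel off $A_1$ and land in the $(r-1)$-partite case, which is exactly your iteration unrolled. One small quibble: the inequality $\binom{a}{s}\ge a^s/(2\cdot s!)$ is not valid merely for $a$ ``well above $2s$'' (it fails near $a=2s$ for large $s$), but it does hold once $a\ge 2s^2$, and since your $a=|M_{i-1}|/n^{r-i}\ge n^{1-\alpha_{i-1}}$ is a power of $n$ this is amply satisfied; the paper sidesteps this by using the ratio bound $\binom{n}{s}/\binom{m}{s}\le (2n/m)^s$ directly.
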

\begin{proof}
We prove \Cref{lem:Kovari-Sos-Turan r-partite} by induction on $r$. The base case $r=2$ follows from \Cref{thm:Kovari-Sos-Turan} since
\[
\sqrt{n}\binom{n}{s} <n \binom{n^{1-1/(4s)}}{s}.
\]
Here we used the fact that $\binom{n}{s}/\binom{m}{s} \le (\frac{n}{m-s})^{s} \le (\frac{2n}{m})^s$ provided $m \ge 2s$.

Assume that the statement holds for $r-1$. Consider the bipartite graph $G=(A\cup B,M)$ in which $A=V_1$ and $B=V_2\times \cdots \times V_r$. Since \[
n^{r-1-1/(4s)^{r-2}}\binom{n}{s} <n^{r-1} \binom{n^{1-1/(4s)^{r-1}}}{s}
\]
for $n\ge 2^{(4s)^{r-1}}$, there exists $A_1\times M' \subset M$ with $|A_1|=s$ and $|M'|=n^{r-1-1/(4s)^{r-2}}$. By appealing to the induction hypothesis, we conclude $M'$ contains $A_2\times \ldots \times A_r$ such that
$|A_i|=s$ for every $2\le i \le r-1$ and $|A_r|=\sqrt{n}$. This completes the proof of \Cref{lem:Kovari-Sos-Turan r-partite}.
\end{proof}

We now show a similar bound for a number of copies of $r$-partite graphs.

\begin{thm}\label{thm:truly last regime - generalisation}
Every $n$-vertex $r$-graph $G$ with $e \ge n^{r-1/6^{r-1}}$ edges contains $\frac{1}{6r}(e/n)^{1/r}$ vertex-disjoint copies of the complete $r$-partite $r$-graph $K_r(s,\ldots,s,t)$, where $s=\frac{1}{12}\Big(\frac{\log n}{\log\left(n^r/e\right)}\Big)^{1/(r-1)}$ and $t=n^{1/r}$.
\end{thm}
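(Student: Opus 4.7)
The plan is to iterate \Cref{lem:Kovari-Sos-Turan r-partite} to extract the $N:=\frac{1}{6r}(e/n)^{1/r}$ vertex-disjoint copies of $K_r(s,\ldots,s,t)$ one at a time; this generalises the four-uniform argument of \Cref{thm:truly last regime} that the paper defers to the appendix.

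First I would fix a uniformly random equipartition $V(G)=V_1\sqcup\cdots\sqcup V_r$ with $|V_j|=n/r$, and let $M$ denote the set of partition-respecting edges; by linearity of expectation we may assume $|M|\ge er!/r^r$. Then I iterate: at step $i$, let $M_{i-1}\subseteq M$ consist of the edges disjoint from the vertices of previously chosen copies, and apply \Cref{lem:Kovari-Sos-Turan r-partite} to $M_{i-1}$ (viewed as an $r$-partite hypergraph on the surviving parts $V_j^{(i)}$) to obtain a copy of $K_r(s,\ldots,s,T_i)$ with $T_i=\sqrt{|V_r^{(i)}|/r}$, from whose big part we carve out any $t$ vertices to form the $i$-th copy of $K_r(s,\ldots,s,t)$. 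Since each step removes at most $rt$ vertices per part and $Nrt=e^{1/r}/6\ll n$, the parts remain of size $\ge n/(2r)$ and the inequality $T_i\ge t$ holds (for the boundary case $r=2$ we simply take the entire big part of size $\sqrt{n}$).

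To verify the hypothesis of \Cref{lem:Kovari-Sos-Turan r-partite} at step $i$, we need $|M_{i-1}|\ge (n/(2r))^{r-1/(4s)^{r-1}}$. The choice of $s$ in the theorem rewrites this threshold as a constant multiple of $n^{r-1/2^{r-1}}$, whereas the starting $|M|$ is of order $e\ge n^{r-1/6^{r-1}}$. Since $1/2^{r-1}>1/6^{r-1}$ for every $r\ge 2$, we have polynomial slack $n^{1/2^{r-1}-1/6^{r-1}}$; it therefore suffices to prove that at most a $1-\Omega(1)$ fraction of $|M|$ is lost over the $N$ iterations.

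The hard part is bookkeeping this edge loss. Small-part losses over all $N$ steps total at most $(r-1)Ns(n/r)^{r-1}\lesssim n^{r-1/r}(\log n)^{O(1)}$, which is $o(e)$ because $s=O((\log n)^{1/(r-1)})$ and $1/r>1/6^{r-1}$ for every $r\ge 2$. The more delicate big-part losses could reach $Nt(n/r)^{r-1}\approx e^{1/r}n^{r-1}$ in the worst case, exceeding $|M|$ whenever $e\ll n^r$. To remedy this I would, at each step, pre-restrict $V_r^{(i)}$ to a set $L_r^{(i)}$ of vertices with $d_{M_{i-1}}(v)\le Cr|M_{i-1}|/n$ for a suitable constant $C$, and apply \Cref{lem:Kovari-Sos-Turan r-partite} inside $V_1^{(i)}\times\cdots\times V_{r-1}^{(i)}\times L_r^{(i)}$; choosing $C$ so that the restricted edge count stays at $\Omega(|M|)$, the big-part loss is then bounded by $O(Nt\cdot r|M|/n)=O(e^{1/r}|M|/n)=o(|M|)$ in the regime $e\ll n^r$. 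Combining these estimates yields $|M_i|=\Omega(e)$ throughout the iteration, so \Cref{lem:Kovari-Sos-Turan r-partite} applies at every step, producing the desired $N$ vertex-disjoint copies. The complementary sub-range $e=\Theta(n^r)$, where the balancing above becomes lossy, can be handled separately by a direct greedy argument since $G$ is then essentially near-complete.
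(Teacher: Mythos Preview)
Your big-part restriction step is not justified and, with the threshold you propose, cannot be.  Restricting $V_r^{(i)}$ to vertices of $M_{i-1}$-degree at most $Cr|M_{i-1}|/n$ need not retain a constant fraction of the edges: handshaking bounds only the \emph{number} of high-degree vertices in $V_r^{(i)}$ (at most $n/(Cr)$), not the number of edges incident to them, and nothing prevents essentially all of $M_{i-1}$ from being concentrated on those vertices.  The underlying problem is that once you fix a partition, each edge's $V_r$-vertex is forced; if that vertex happens to be a high-degree one the edge is lost, and with a threshold of order $e/n$ the ``high-degree'' vertices in $G$ can still number $\Theta(n)$, so the usual trick of bounding edges entirely among high-degree vertices gives only $\binom{\Theta(n)}{r}\gg e$, which is useless.

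The paper's proof avoids the random partition altogether.  It sets $V_1=\cdots=V_{r-1}=V(G)$ and takes $V_r$ to be the set of \emph{non-expanding} vertices, where expanding means $G$-degree at least $re^{1-1/r}$.  This threshold---not $e/n$---is the point: there are at most $e^{1/r}$ expanding vertices, so at most $\binom{e^{1/r}}{r}\le e/6$ edges lie entirely among them, and every remaining edge contains some non-expanding vertex which can be placed in the $V_r$-coordinate of the $r$-tuple fed to \Cref{lem:Kovari-Sos-Turan r-partite}.  Because the first $r-1$ coordinates are not pre-partitioned, this assignment is done edge by edge rather than globally, which is exactly what your fixed-partition approach cannot do.  With this set-up the big-part loss over all $k<\frac{1}{6r}(e/n)^{1/r}$ copies is at most $kt\cdot re^{1-1/r}\le e/6$, the small-part loss is bounded as you describe, and one further discards the $\le e/6$ edges inside the expanding set; a single application of \Cref{lem:Kovari-Sos-Turan r-partite} to the surviving $\ge e/2$ edges then yields an additional well-behaved copy, contradicting maximality.
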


\begin{proof}
A vertex of $G$ is called {\em expanding} if its degree is at least $re^{1-1/r}$. By the handshaking lemma, there are at most $e^{1/r}$ expanding vertices. Let $k$ denote the maximum number of vertex-disjoint copies of $K_r(s,\ldots,s,t)$ that can be embedded in $G$, whereas in each copy the $r$-th vertex class consists of $t$ non-expanding vertices. Suppose to the contrary that $k<(e/n)^{1/r}$. 

The number of edges containing some used vertices is at most $k(r-1)s\cdot \binom{n}{r-1}+kt\cdot r e^{1-1/r} \le e/3$ assuming $e \ge n^{r-1/r}$. Moreover, the number of edges within the set of expanding vertices is at most $\binom{e^{1/r}}{r} \le e/6$. Therefore, by removing those edges we obtain a subhypergraph $H$ with $e/2$ edges such that any edge of $H$ contains at least one non-expanding vertices, and the edges of $H$ don't touch used vertices. Let $V_1,\ldots,V_{r-1}$ be $r-1$ copies of $V(G)$, and  $V_r$ be a copy of the set of non-expanding vertices. Denote by $M$ the set of $r$-tuples $(v_1,\ldots,v_r)$ in $V_1\times \cdots \times V_r$ such that $\{v_1,\ldots,v_r\}$ is an edge of $H$. Clearly, $|M| \ge |E(H)| \ge e/2 \ge n^{r-1/(4s)^{r-1}}$.\footnote{Notice that $e/2 \ge n^{r-1/(4s)^{r-1}}$ provided $s\le \frac{1}{6}\Big(\frac{\log n}{\log (n^r/e)}\Big)^{1/(r-1)}$, while $n\ge 2^{(4s)^{r-1}}$ for $s \le \frac{1}{4} (\log n)^{1/(r-1)}$. Furthermore, for $e\ge n^{1-1/6^{r-1}}$ we have $\frac{1}{6}\Big(\frac{\log n}{\log (n^r/e)}\Big)^{1/(r-1)}\ge 1$.} \Cref{lem:Kovari-Sos-Turan r-partite} implies that there exists a set $A_1\times \cdots \times A_r \subset M$ satisfying $|A_1|=\ldots=|A_{r-1}|=s$ and $|A_r|=\sqrt{n} \ge t$.
The sets $A_1,\ldots,A_r$ are disjoint, for the edges of $H$ consist of distinct vertices. Hence $H$ contains a copy of $K_r(s,\ldots,s,t)$ in which the $r$-th vertex class is embedded in the set of non-expanding vertices, a contradiction to the maximality of $k$.
\end{proof}

\end{document}